\theoremstyle{definition}
\newtheorem{thm}{Theorem}[section]
\newtheorem{definition}[thm]{Definition}
\newtheorem{prop}[thm]{Proposition}
\newtheorem{lem}[thm]{Lemma}
\newtheorem{rem}[thm]{Remark}
\newtheorem{cor}[thm]{Corollary}
\newtheorem{prob}[thm]{Problem}
\newtheorem*{ack}{Acknowledgement}
\numberwithin{equation}{section}
\newcommand{\Z}{\mathbb{Z}}
\newcommand{\Spec}{\operatorname{Spec}}
\newcommand{\Hom}{{\rm Hom}}
\newcommand{\Frac}{\operatorname{Frac}}
\newcommand{\im}{{\rm Im}}
\newcommand{\Cok}{{\rm Coker}}
\newcommand{\Deg}{{\rm deg}}
\newcommand{\Gal}{{\rm Gal}}
\newcommand{\sO}{\mathscr{O}}
\newcommand{\calH}{\mathcal{H}}
\newcommand{\bbP}{\mathbb{P}}
\newcommand{\Def}{\overset{{\rm def}}{=}}
\newcommand{\et}{\text{{\rm \'et}}}
\newcommand{\fppf}{{\rm fppf}}
\newcommand{\Zar}{{\rm Zar}}
\newcommand{\tame}{{\rm tame}}
\newcommand{\rmt}{{\rm t}}
\newcommand{\ct}{\mathrm{ct}}
\newcommand{\G}{\mathbb{G}}
\newcommand{\F}{\mathbb{F}}
\newcommand{\A}{\mathbb{A}}
\newcommand{\ur}{\mathrm{ur}}
\newcommand{\CH}{\mathrm{CH}}
\newcommand{\DIV}{\mathrm{Val}^{div}}
\newcommand{\Cor}{\mathrm{Cor}}
\newcommand{\Res}{\mathrm{Res}}
\newcommand{\dlog}{\mathrm{dlog}}
\begin{document}

\title[Mod $p$ unramified cohomology]{On the mod $p$ unramified cohomology of varieties having universally trivial Chow group of zero-cycles}

\author[S. Otabe]{Shusuke Otabe}
\address{Department of Mathematics, School of Engineering, Tokyo Denki University\\ 5 Senju Asahi\\ Adachi\\ Tokyo 120-8551\\ Japan
}
\email{shusuke.otabe@mail.dendai.ac.jp}

\date{\today}

\keywords{zero-cycles, unramified cohomology, positive characteristic}
\subjclass{14C15, 14E08, 14F42}

\begin{abstract}
Auel--Bigazzi--B\"ohning--Graf von Bothmer proved that if a proper smooth variety $X$ over a field $k$ of characteristic $p>0$ has universally trivial Chow group of $0$-cycles, the cohomological Brauer group of $X$ is universally trivial as well. In this paper, we generalize their argument to arbitrary unramified mod $p$ \'etale motivic cohomology groups. We also see that the properness assumption on the variety $X$ can be dropped off by using the Suslin homology together with a certain tame subgroup of the unramified cohomology group. 
\end{abstract}

\maketitle

\thispagestyle{empty}

\section{Introduction}

The goal of the present paper is to give a positive answer (see Corollary \ref{cor-int:main} below) to the following problem posed by Auel et al.

\begin{prob}(cf.\ \cite[Problem 1.2]{ABBB})\label{prob:ABBB} 
Let $X$ be a proper smooth variety over a field $k$ of characteristic $p>0$. Suppose that $X$ has universally trivial Chow group of $0$-cycles, i.e.\ the degree map of the Chow group of zero-cycles is an isomorphism $\Deg\colon\CH_0(X_K)\xrightarrow{\simeq}\Z$ for any field extension $K/k$. Then, is the natural homomorphism $H^{i}(k,\Z/p\Z(j))\to H^{i}_{\ur}(k(X)/k,\Z/p\Z(j))$ an isomorphism for any integers $i,j\ge 0$\,? 
\end{prob}

Note that the formulation is slightly different from theirs. Here, the cohomology group $H^{i}(k,\Z/p\Z(j))$ is the mod $p$ \'etale motivic cohomology group of weight $j$, i.e.\ $H^{i}(k,\Z/p\Z(j))=H^{i-j}_{\et}(k,\Omega^j_{\log})$, and the group $H^{i}_{\ur}(k(X)/k,\Z/p\Z(j))$ is the \textit{unramified cohomology group} of the function field $k(X)$, which is defined as the subgroup of the group $H^{i}(k(X),\Z/p\Z(j))$ consisting of cohomology classes which are unramified at every geometric rank one discrete valuation on $k(X)/k$~(cf.\ \cite[\S 5]{BM13}). As the $p$-cohomological dimension of a field of characteristic $p>0$ is less than or equal to one, we have $H^i_{\ur}(k(X)/k,\Z/p\Z(j))=0$ for $i-j\neq 0,1$. Therefore, the problem is nontrivial only in the case when $i=j$ or $i=j+1$. In the former case, the groups $H^{i}(K,\Z/p\Z(i))$ are naturally isomorphic to the mod $p$ Milnor K-groups $K_i^{\rm M}(K)/p$, i.e.\ 
\begin{equation*}
H^i(K,\Z/p\Z(i))\simeq K^{\rm M}_i(K)/p
\end{equation*}
for all field extensions $K/k$ and for all integers $i\ge 0$~(cf.\ \cite[Theorem 2.1]{BK}), and they form a cycle module in the sense of Rost\cite{Rost}. Therefore, by Merkurjev's theorem \cite[Theorem 2.11]{Mer08}, Problem \ref{prob:ABBB} has an affirmative answer in that  case. 
The remaining case is when $i=j+1$. In \cite{ABBB}, Auel et al.\ solved the problem affirmatively for $(i,j)=(2,1)$~(cf.\ \cite[Theorem 1.1]{ABBB}), in which case the unramified cohomology $H^2_{\ur}(k(X)/k,\Z/p\Z(1))$ can be identified with the $p$-torsion subgroup of the Brauer group ${\rm Br}(X)=H^2_{\et}(X,\G_m)$.    

In the present paper, we will extend their argument to the unramified cohomology group $H^{i+1}_{\ur}(k(X)/k,\Z/p\Z(i))$, where $i$ is an arbitrary non-negative integer. As the main result, we will prove the following result.

\begin{thm}(cf.\ Corollary \ref{cor:pairing})\label{thm-int:main}
Let $X$ be a smooth geometrically connected variety over a field $k$ of characteristic $p>0$. Suppose that the degree map $\Deg\colon H^S_0(X_K)\to\Z$ is an isomorphism for any finitely generated field extension $K/k$. Then for any $i\ge 0$, we have a natural isomorphism $H^{i+1}(k,\Z/p\Z(i))\xrightarrow{\simeq} H^{i+1}_{\ct,\ur}(X/k,\Z/p\Z(i))$. 
\end{thm}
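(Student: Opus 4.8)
The plan is to deduce the isomorphism from a pairing between Suslin homology and the tame unramified cohomology, combined with the decomposition of the diagonal furnished by the hypothesis. Concretely, I would first construct, for every finitely generated extension $K/k$, a pairing
\[
\langle -,-\rangle\colon H^S_0(X_K)\times H^{i+1}_{\ct,\ur}(X/k,\Z/p\Z(i))\lto H^{i+1}(K,\Z/p\Z(i)),
\]
functorial in $K$, which on a closed point $P\in X_K$ with residue field $\kappa(P)$ sends $([P],\alpha)$ to the transfer to $K$ of the pullback of $\alpha$ to $\kappa(P)$. Since $H^{i+1}(-,\Z/p\Z(i))=H^1_{\et}(-,\Omega^i_{\log})$ underlies a cycle module with transfers in the sense of Rost, such a pairing is obtained by pulling back $\alpha$ along each point of a finite correspondence and summing the resulting corestrictions; the tame condition defining $H^{i+1}_{\ct,\ur}$ is exactly what should guarantee that these transfers are well defined and that the $\A^1$-homotopy relations are respected, so that the pairing descends from $\Cor(\Spec K,X)$ to $H^S_0(X_K)$. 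Two formal properties will be needed: compatibility with base change in $K$, and the projection formula $\langle z,\pi^*\beta\rangle=\Deg(z)\cdot\beta_K$ for $\beta\in H^{i+1}(k,\Z/p\Z(i))$, where $\pi\colon X\to\Spec k$ is the structure morphism and $\beta_K$ denotes the image of $\beta$ in $H^{i+1}(K,\Z/p\Z(i))$.

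Granting the pairing, injectivity of the map $\pi^*\colon H^{i+1}(k,\Z/p\Z(i))\to H^{i+1}_{\ct,\ur}(X/k,\Z/p\Z(i))$ is immediate: the hypothesis at $K=k$ gives $H^S_0(X)\xrightarrow{\simeq}\Z$, hence a class $z_0\in H^S_0(X)$ of degree one, and the projection formula yields $\langle z_0,\pi^*\beta\rangle=\beta$ for every $\beta$. Thus $\langle z_0,-\rangle$ splits $\pi^*$.

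For surjectivity I would evaluate at the generic point. The diagonal equips $X_{k(X)}$ with the canonical $k(X)$-rational point $\delta$ coming from $\eta=\Spec k(X)$, and pairing with it recovers restriction to the generic point: $\langle\delta,\alpha\rangle=\alpha|_{k(X)}$ for every $\alpha\in H^{i+1}_{\ct,\ur}(X/k,\Z/p\Z(i))$. Both $\delta$ and the base change $(z_0)_{k(X)}$ have degree one, so the hypothesis at $K=k(X)$, which makes $\Deg\colon H^S_0(X_{k(X)})\to\Z$ injective, forces $\delta=(z_0)_{k(X)}$ in $H^S_0(X_{k(X)})$. Combining this identity with base-change compatibility of the pairing and the projection formula gives
\[
\alpha|_{k(X)}=\langle\delta,\alpha\rangle=\langle(z_0)_{k(X)},\alpha\rangle=\big(\langle z_0,\alpha\rangle\big)\big|_{k(X)}=(\pi^*\beta)|_{k(X)},\qquad \beta:=\langle z_0,\alpha\rangle.
\]
Since $H^{i+1}_{\ct,\ur}(X/k,\Z/p\Z(i))$ is a subgroup of $H^{i+1}(k(X),\Z/p\Z(i))$ via restriction to the generic point, the equality $\alpha|_{k(X)}=(\pi^*\beta)|_{k(X)}$ upgrades to $\alpha=\pi^*\beta$, proving surjectivity.

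The genuinely hard part is the first step: building the pairing and verifying its two properties in characteristic $p$. Unlike the Milnor K-theory case treated by Merkurjev, the sheaves $\Omega^i_{\log}$ are delicate with respect to wild ramification, so the naive residue and transfer formalism need not satisfy Rost's reciprocity or the homotopy invariance required to descend from finite correspondences to $H^S_0$. I expect the tame subgroup $H^{i+1}_{\ct,\ur}$ to be introduced precisely to repair this: by restricting to classes that are tame at all geometric rank one valuations, the corestriction maps should become compatible with specialization and with the $\A^1$-invariance needed to pass from $\Cor(\Delta^\bullet,X)$ to $H^S_0$. Checking that the pairing indeed factors through Suslin homology, i.e.\ that it annihilates the boundary $\partial_0-\partial_1$ of $C_1(X_K)$, and that it obeys the projection formula, is where the main technical work lies; once these are in place, the argument above is purely formal.
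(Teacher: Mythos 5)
Your skeleton is the same as the paper's (pairing with $Z_0$, injectivity via a degree-one splitting, surjectivity via the generic point and injectivity of $\Deg$ on $H^S_0(X_{k(X)})$; this is Theorem \ref{thm:pairing} and Corollary \ref{cor:pairing}), but there is a genuine gap in the surjectivity step: the base-change compatibility $\langle x_K,\alpha_K\rangle=\langle x,\alpha\rangle_K$ that you list as a ``formal property'' is not formal in characteristic $p$. If the residue extension $k(x)/k$ is inseparable, the fiber $\Spec(K\otimes_k k(x))$ may be non-reduced, the pulled-back cycle $x_K$ acquires multiplicities, and the identity $\Res_{K/k}\circ\Cor_{k(x)/k}=\sum\Cor\circ\Res$ (property (C\ref{eq:Res vs Cor})) only applies when the tensor product is a product of fields; a multiplicity-weighted analogue for Kato's corestriction on $H^1(-,\Omega^i_{\log})$ is exactly the kind of cycle-module axiom that fails (or is unknown) mod $p$, which is the raison d'\^etre of the whole paper. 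Accordingly, the paper proves compatibility only for closed points with \emph{separable} residue extension (Lemma \ref{lem:pairing with Z_0}\,(2)), and in the surjectivity argument it must invoke Gabber--Liu--Lorenzini \cite[Theorem 9.2]{GLL}: surjectivity of $\Deg\colon H^S_0(X)\to\Z$ yields a degree-one zero-cycle supported on points with separable residue fields. Your $z_0$ carries no such support condition, so the middle equality $\langle(z_0)_{k(X)},\alpha_{k(X)}\rangle=\langle z_0,\alpha\rangle_{k(X)}$ in your chain is unjustified as written; this extra arithmetic input is a missing idea, not a routine verification. (Your injectivity argument is fine: the splitting only uses $\Cor\circ\Res=[L:K]$, valid for arbitrary finite extensions.)

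The second gap is the one you flag yourself: the descent of the pairing through the $\A^1$-relations is left as an expectation, and your guess about the role of tameness, while correct in spirit, omits the structure that makes it work. The paper's mechanism is: reduce via normalization and a projection formula for curve morphisms (Lemma \ref{lem:pairing 1}) to a finite surjective $\phi\colon C\to\A^1_k$; construct a corestriction $\Cor^{\rmt}_{C/\A^1_k}$ on the \emph{na\"ive} tame unramified groups (Proposition \ref{prop:Cor on t, ur coh}), compatible with residues and with fibers in the sense $\langle C_i,\alpha\rangle=s_i^*\Cor^{\rmt}_{C/\A^1_k}(\alpha)$ (Lemma \ref{lem:pairing 2}, which rests on an explicit computation with symbols $[f,g_1,\dots,g_i\}$ and Lemma \ref{lem:s_v dvr}); and finally conclude $s_0^*=s_1^*$ from the homotopy invariance $H^{i+1,i}(k)\simeq H^{i+1,i}_{\tame,\ur}(\A^1_k/k)$, which comes from Totaro's tame Faddeev sequence (Theorem \ref{thm:mod p Faddeev}). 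Note also that the paper needs \emph{two} tame subgroups: the na\"ive one $H^{i+1,i}_{\tame,\ur}$, which admits corestrictions but lacks clear functoriality under arbitrary pullbacks, and the curve-tame one $H^{i+1,i}_{\ct,\ur}$, which is fully functorial but has no evident transfers; the statement is formulated with the latter precisely so that restriction to the auxiliary curves lands in the former (they coincide on normal curves, Proposition \ref{prop:t, ur coh}\,(1)). Any completed version of your sketch would have to make this distinction, since a single ``tame'' group with both properties is not available.
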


Here, $H^S_0(X_K)$ stands for the $0$-th Suslin homology group~(cf.\ \cite[\S3]{Kahn11}) and the group $H^{i+1}_{\ct,\ur}(X/k,\Z/p\Z(i))$ is a certain subgroup of the unramified cohomology group $H^{i+1}_{\ur}(X,\Z/p\Z(i))$, which we call the \textit{unramified curve-tame cohomology group}~(cf.\ Definition \ref{def:ct, ur coh}). In the case where $X$ is proper over $k$, the unramified curve-tame cohomology recovers the unramified cohomology of the function field $k(X)$, i.e.\ $H^{i+1}_{\ct,\ur}(X/k,\Z/p\Z(i))=H^{i+1}_{\ur}(k(X)/k,\Z/p\Z(i))$
, and the natural quotient map $H^S_0(X_K)\twoheadrightarrow\CH_0(X_K)$ is an isomorphism. Therefore, as a corollary of the theorem, we obtain the following result, which gives a positive answer to Problem \ref{prob:ABBB}.

\begin{cor}(cf.\ Corollary \ref{cor:pairing proper})\label{cor-int:main}
Let $X$ be a proper smooth variety over a field $k$ of characteristic $p>0$. Suppose that the degree map $\Deg\colon \CH_0(X_K)\to\Z$ is an isomorphism for any field extension $K/k$. Then for any $i\ge 0$, we have a natural isomorphism $H^{i+1}(k,\Z/p\Z(i))\xrightarrow{\simeq} H^{i+1}_{\ur}(k(X)/k,\Z/p\Z(i))$. 
\end{cor}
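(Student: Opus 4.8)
The plan is to obtain this corollary as a direct specialization of Theorem \ref{thm-int:main} to the proper case, where two comparison isomorphisms collapse the Suslin homology onto the Chow group and the unramified curve-tame cohomology onto the ordinary unramified cohomology of the function field. Thus the entire argument consists of translating the hypothesis, invoking the theorem, and translating the conclusion.

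First I would check that the hypothesis of Theorem \ref{thm-int:main} holds. Because $X$ is proper and smooth over $k$, for every field extension $K/k$ the natural quotient map $H^S_0(X_K)\twoheadrightarrow\CH_0(X_K)$ is an isomorphism, and it is compatible with the respective degree maps to $\Z$. Restricting to finitely generated $K/k$ and combining this identification with the standing assumption that $\Deg\colon\CH_0(X_K)\to\Z$ is an isomorphism, I conclude that $\Deg\colon H^S_0(X_K)\to\Z$ is an isomorphism for every finitely generated field extension $K/k$. This is exactly the input required by Theorem \ref{thm-int:main}.

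Applying the theorem then yields, for each $i\ge 0$, a natural isomorphism
\[
H^{i+1}(k,\Z/p\Z(i))\xrightarrow{\simeq}H^{i+1}_{\ct,\ur}(X/k,\Z/p\Z(i)).
\]
To finish, I would invoke the second comparison valid in the proper case, namely the identification $H^{i+1}_{\ct,\ur}(X/k,\Z/p\Z(i))=H^{i+1}_{\ur}(k(X)/k,\Z/p\Z(i))$, and compose the two maps. This produces the asserted natural isomorphism $H^{i+1}(k,\Z/p\Z(i))\xrightarrow{\simeq}H^{i+1}_{\ur}(k(X)/k,\Z/p\Z(i))$.

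The substantive mathematics is packaged into Theorem \ref{thm-int:main} and into the two proper-case comparisons; the corollary itself is a clean bookkeeping step. Accordingly, the only point demanding genuine care is \emph{naturality and compatibility}: one must ensure that the Suslin-to-Chow isomorphism carries the Suslin degree map to the Chow degree map, and that the curve-tame/unramified identification is functorial, so that the composite is literally the canonical comparison map named in the statement rather than an ad hoc isomorphism. Properness is indispensable throughout, since it is precisely what forces both comparison maps to be isomorphisms; for non-proper $X$ the Suslin homology and the curve-tame cohomology are the correct replacements, which is why Theorem \ref{thm-int:main} is stated in that greater generality.
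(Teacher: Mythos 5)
Your proposal is correct and follows exactly the paper's own proof: the paper likewise deduces the corollary from Theorem \ref{thm-int:main} (in the form of Corollary \ref{cor:pairing}) by using the natural isomorphism $H_0^S(X_K)\xrightarrow{\simeq}\CH_0(X_K)$ for proper smooth $X$ to verify the hypothesis, and Proposition \ref{prop:t, ur coh}\,(2) for the identification $H^{i+1}_{\ct,\ur}(X/k,\Z/p\Z(i))=H^{i+1}_{\ur}(k(X)/k,\Z/p\Z(i))$. Your added remarks on compatibility of the degree maps and naturality of the comparisons match (and slightly elaborate on) what the paper leaves implicit.
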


After writing up the first version of the present paper, the author learned the paper~\cite{BRS} by Binda--R\"ulling--Saito, in which Corollary \ref{cor-int:main} is obtained as a general fact on   \textit{reciprocity sheaves}. On the other hand, our proof is independent of new framework developed there. In \cite{KOY}, a further different type of approach is discussed.

We explain the organization of the present paper. 
In \S\ref{sec:log HW}, we recall general facts on  the logarithmic Hodge--Witt sheaves. We recall the statement of the Gersten-type conjecture established by Gros--Suwa\cite{Gros-Suwa} and Shiho\cite{Shiho07}(cf.\ Theorem \ref{thm:Gersten}). We also recall basic properties of corestriction map on the mod $p$ \'etale motivic cohomology, which was defined by Kato~(cf.\ \cite{Kato}). 
In \S\ref{sec:ur}, we recall the notion of unramified cohomology and discuss some properties of it.  

In \S\ref{sec:ur tame}, we introduce two kinds of  tame subgroups of the mod $p$ unramified cohomology, namely the \textit{na\"ive unramified tame cohomology} $H^{i+1}_{\tame,\ur}(X/k,\Z/p\Z(i))$~(cf.\ Definition \ref{def:t, ur coh}) and the \textit{unramified curve-tame cohomology} $H^{i+1}_{\ct,\ur}(X/k,\Z/p\Z(i))$~(cf.\ Definition \ref{def:ct, ur coh}). We see the former one admits a corestriction map for any finite surjective morphism of normal varieties~(cf.\ Proposition \ref{prop:Cor on t, ur coh}). However, we cannot see that it has enough functoriality property. For that reason, we consider the latter tame subgroup, which is respected by morphisms between regular varieties. In the case where $X=C$ is a normal curve, these tame subgroups coincide with each other~(cf.\ Proposition \ref{prop:t, ur coh}(1)). The idea of considering tame subgroups goes back to the works due to Kato\cite{Kato82}, Izhboldin\cite{Izhboldin}, Garibaldi--Merkurjev--Serre\cite{GMS}, Auel et al.\ \cite{ABBB tame} and Totaro\cite{Totaro20}. 

In \S\ref{sec:pairing}, we prove Theorem \ref{thm-int:main}. The technical issue is the same as in \cite{ABBB}. Namely, for a smooth variety over a field $k$ of characteristic $p>0$, we construct a family of pairings
\begin{equation*}
\left\{H^S_0(X_K)\times H^{i+1}_{\ct,\ur}(X_K/K,\Z/p\Z(i))\to H^{i+1}(K,\Z/p\Z(i))\right\}_{K}
\end{equation*}
which fulfills a satisfactory compatibility condition, where $K$ is taken over all finitely generated field  extensions of $k$~(cf.\ Theorem \ref{thm:pairing}). To this end, we follow the argument in \cite[\S3]{ABBB}. The idea of dropping off the properness assumption  from the original problem~(cf.\ Problem \ref{prob:ABBB}) goes back to the work of Bruno Kahn\cite{Kahn11}. He generalized Merkurjev's theorem\cite[Theorem 2.11]{Mer08} to an open variety by replacing the Chow group with the Suslin homology group~(cf.\ \cite[Corollary 4.7]{Kahn11}).

\begin{ack}
The author would like to thank Tomoyuki Abe for having fruitful discussions and giving helpful comments. The author is grateful to referees for giving comments and suggestions. The author is supported by JSPS KAKENHI Grant (JP19J00366, JP21K20334). 
\end{ack}

\section*{Notation}

For an equidimensional scheme $X$ and any integer $i\ge 0$, we denote by $X^{(i)}$ (respectively $X_{(i)}$) the set of points of $X$ of codimension $i$ (respectively of dimension $i$).

Let $k$ be a field. A \textit{variety} over $k$ is an integral separated scheme of finite type over $k$. A \textit{curve} over $k$ (or $k$-\textit{curve}) is a variety over $k$ of dimension one. Let $K$ be a finitely generated field over $k$. A \textit{model} of $K/k$ is a proper variety over $k$ together with an isomorphism $k(X)\xrightarrow{\simeq}K$ of fields over $k$. 

Let $K/k$ be a finitely generated field extension of $k$. A valuation $v$ on $K$ over $k$ is a valuation on $K$ such that the associated valuation ring $\sO_v$ contains $k$ as a subalgebra. A discrete rank one valuation $v$ on $K$ over $k$ is said to be \textit{geometric} if it satisfies the condition that
\begin{equation*}
{\rm tr.deg}_k(K)={\rm tr.deg}_k(k(v))+1,
\end{equation*}
where $k(v)$ is the residue field of $v$ and ${\rm tr.deg}_k(L)$ means the transcendental degree over $k$ for any field extension $L/k$. According to \cite[Proposition 1.7]{Mer08}, a discrete rank one valuation $v$ on $K$ over $k$ is geometric if and only if there exists a normal model $X$ of $K/k$ such that the point $x$ dominated by $v$ is of codimension one and $\sO_v=\sO_{X,x}$. A geometric discrete rank one valuation on $K/k$ is also called a \textit{divisorial valuation}~(cf.\ \cite[Definition 2.4]{ABBB}).


\section{The logarithmic Hodge--Witt sheaves}\label{sec:log HW}

Let $X$ be a scheme over the prime field $\F_p$ of positive characteristic $p>0$. For any integer $n\ge 1$, let $W_n\Omega^{\bullet}_{X}$ denote the de Rham--Witt complex of $X/\F_p$~(cf.\ \cite[I, 1.3]{Ill}). Recall that for any morphism of $\F_p$-schemes $f\colon Y\to X$, we have a canonical morphism of complexes of $W_n(\sO_Y)$-modules,
\begin{equation}\label{eq:HW}
f^{-1}W_n\Omega_X^{\bullet}\to W_n\Omega_Y^{\bullet}
\end{equation}
(cf.\ \cite[I, (1.12.3)]{Ill}), which is an isomorphism if $f$ is \'etale~(cf.\ \cite[I, Proposition 1.14]{Ill}).

For any $i\ge 0$, we denote by $W_n\Omega^i_{X,\log}$ the logarithmic Hodge--Witt sheaf of $X$ in the sense of \cite[Definition 2.6]{Shiho07}. Namely it is the \'etale sheaf on $X$ defined as the image
\begin{equation*}
W_n\Omega^i_{X,\log}\Def \im\bigl((\sO_X^{\times})^{\otimes i}\to W_n\Omega^i_X\bigl),
\end{equation*}
of the map $(\sO_X^{\times})^{\otimes i}\to W_n\Omega^i_X\,;\,x_1\otimes\cdots\otimes x_i\mapsto \dlog[x_1]\wedge\cdots\wedge \dlog [x_i]$, where $[x_i]\in W_n\sO_X$ is the Teichm\"uller representative of $x_i$. 
If $f\colon Y\to X$ is a morphism of $\F_p$-schemes, by the functoriality of the de Rham--Witt complexes~(\ref{eq:HW}), there exists a natural morphism of \'etale sheaves on $Y$, 
\begin{equation}\label{eq:log HW}
f^{-1}W_n\Omega_{X,\log}^i\to W_n\Omega^i_{Y,\log}.
\end{equation}
In particular, we have the restriction maps 
$H^{j}_{\et}(X,W_n\Omega^i_{X,\log})\to H^j_{\et}(Y,W_n\Omega_{Y,\log}^i)$. 

According to \cite[Proposition 2.8]{Shiho07}, for a regular scheme $X$ over $\F_p$, we have the exact sequence on $X_{\et}$,
\begin{equation}\label{eq:log HW 1-F}
0\to \Omega^i_{X,\log}\to \Omega^i_X\xrightarrow{1-F}\Omega^i_X/d\Omega^{i-1}_X\to 0,
\end{equation} 
where $F\colon \Omega^i_X\to \Omega^i_X/d\Omega_X^{i-1}$ is the map induced by the Frobenius $F\colon W_{2}\Omega^{\bullet}_X\to W_1\Omega^{\bullet}_X=\Omega^{\bullet}_X$ of the de Rham--Witt complexes~(cf.\ \cite[Lemma 2.7]{Shiho07}).

For an equidimensional scheme $X$ over $\F_p$, we have the coniveau spectral sequence
\begin{equation}\label{eq:coniveau}
E_1^{s,t}=\bigoplus_{x\in X^{(s)}}H^{s+t}_x(X,W_n\Omega^i_{X,\log})\Rightarrow E^{s+t}=H^{s+t}(X,W_n\Omega^i_{X,\log})
\end{equation}
(cf.\ \cite[\S 4]{Shiho07}). We set $B_n^{t,i}(X)^{\bullet}\Def E_1^{\bullet,t}$. Then the  following Gersten-type  conjecture is established by Gros--Suwa for localizations of smooth algebras of finite type over a perfect field of characteristic $p>0$ and by Shiho in the arbitrary case.

\begin{thm}(cf.\ \cite{Gros-Suwa}\cite[Theorem 4.1]{Shiho07})\label{thm:Gersten}
Let $X=\Spec A$ be the spectrum of an equidimensional regular local ring $A$ over $\F_p$. Then we have
\begin{equation*}
H^m(B_{n}^{q,i}(X)^{\bullet})=
\begin{cases}
H^q(X,W_n\Omega^i_{X,\log})& m=0,\\
0& m>0.
\end{cases}
\end{equation*}
\end{thm}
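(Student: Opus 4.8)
The statement is equivalent, via the formalism of the coniveau spectral sequence (\ref{eq:coniveau}), to the assertion that the Cousin complex $B_n^{q,i}(X)^\bullet=E_1^{\bullet,q}$ is a resolution of the sheaf $W_n\Omega^i_{X,\log}$; concretely, that $E_2^{s,q}=0$ for $s>0$ while $E_2^{0,q}=H^q(X,W_n\Omega^i_{X,\log})$. My plan is to establish this in two stages: reduce an arbitrary equidimensional regular local ring over $\F_p$ to the smooth case by a limit argument, and then settle the smooth case by relating the logarithmic sheaves to coherent differentials.

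I would treat the smooth case first, say $A$ a localization of a smooth $\F_p$-algebra, where each $\Omega^i_X$ is locally free. Here I would induct on $n$. For $n=1$ I would feed the fundamental exact sequence (\ref{eq:log HW 1-F}) into the coniveau machinery: since $\Omega^i_X$ and $\Omega^i_X/d\Omega^{i-1}_X$ are coherent and locally free over the regular (hence Cohen--Macaulay) ring $A$, the classical theory of the Cousin complex shows their local cohomology $H^{s+t}_x(X,-)$ is concentrated in the row $t=0$ and that this row is a resolution. Chasing the long exact sequence of local cohomology attached to (\ref{eq:log HW 1-F}) then identifies $H^{s}_x(X,\Omega^i_{X,\log})$ and $H^{s+1}_x(X,\Omega^i_{X,\log})$ with the kernel and cokernel of $1-F$ on top local cohomology, shows these are the only nonzero terms, and yields both the exactness of the rows $B_1^{q,i}(X)^\bullet$ and the identification of their degree-zero cohomology. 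For the inductive step I would use the fundamental short exact sequences of logarithmic Hodge--Witt sheaves relating $W_n\Omega^i_{X,\log}$, $W_{n-1}\Omega^i_{X,\log}$ and $\Omega^i_{X,\log}$; functoriality of (\ref{eq:coniveau}) turns these into an exact sequence of Gersten complexes, and the five lemma propagates the resolution property from levels $1$ and $n-1$ to level $n$.

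To descend from this smooth case to an arbitrary equidimensional regular local ring $A$ over $\F_p$, I would invoke N\'eron--Popescu desingularization: because $\F_p$ is perfect, $A$ is geometrically regular over $\F_p$ and hence a filtered colimit $A=\varinjlim_\lambda A_\lambda$ of smooth $\F_p$-algebras of finite type, with $\Spec A$ the limit of the associated localized smooth schemes. The de Rham--Witt complexes, the logarithmic Hodge--Witt sheaves, their \'etale cohomology, and the local cohomology groups appearing in (\ref{eq:coniveau}) all commute with filtered colimits of rings; consequently the complex $B_n^{q,i}(\Spec A)^\bullet$ is the filtered colimit of the corresponding complexes for the $(A_\lambda)_{\mathfrak{p}_\lambda}$. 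Since filtered colimits are exact, the resolution property established in the smooth case passes to $A$, giving the theorem in general.

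The step I expect to be the main obstacle is this last descent. One must verify that the coniveau spectral sequence (\ref{eq:coniveau}) is genuinely compatible with the limit $\Spec A=\varprojlim_\lambda \Spec (A_\lambda)_{\mathfrak{p}_\lambda}$: that the stratification of $\Spec A$ by codimension of points, together with the local cohomology terms $H^{s+t}_x(X,W_n\Omega^i_{X,\log})$ and the Gersten differentials, arises as the colimit of the corresponding data on the $A_\lambda$. This demands careful control of how points and their codimensions behave along the smooth transition maps and a continuity statement for logarithmic Hodge--Witt cohomology with supports, and it is precisely here that the arbitrary regular case requires more than the smooth-over-a-perfect-field argument of Gros--Suwa.
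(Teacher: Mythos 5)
First, a point of comparison: the paper does not prove Theorem \ref{thm:Gersten} at all --- it is imported from the literature, the smooth case from Gros--Suwa \cite{Gros-Suwa} and the general regular case from \cite[Theorem 4.1]{Shiho07}. Your two-stage architecture (smooth case first, then descent to arbitrary equicharacteristic regular local rings via N\'eron--Popescu) is in fact exactly the architecture of those references, so the plan is sound in outline; the genuine gap is in your treatment of the smooth case, which you regard as settled. The long exact sequence of local cohomology attached to (\ref{eq:log HW 1-F}) does show that for $x$ of codimension $c$ the groups $H^j_x(X,\Omega^i_{X,\log})$ vanish unless $j=c$ or $j=c+1$, so the coniveau $E_1$-page has exactly two potentially nonzero rows, which are, termwise, the kernel and cokernel of $1-F$ between the coherent Cousin complexes of $\Omega^i_X$ and $\Omega^i_X/d\Omega^{i-1}_X$. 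But exactness of these kernel and cokernel complexes is \emph{not} a formal consequence of the acyclicity of the two coherent Cousin complexes. For a map of complexes $f\colon C_1\to C_2$, each acyclic in positive degrees, the chase you describe only produces isomorphisms $H^m(\Cok f)\simeq H^{m+2}(\Ker f)$ for $m\ge 1$: it links the two rows to each other without killing either. A concrete counterexample: let $A$ be a discrete valuation ring with fraction field $K$ and let $f$ be multiplication by a uniformizer $\pi$ on the Cousin complex $K\to K/A$ of $A$; both source and target are resolutions of $A$, yet the kernel complex is $0\to \pi^{-1}A/A$, with $H^1\simeq A/\pi A\neq 0$ --- and here $f$ is even $\sO$-linear, whereas $1-F$ is merely $\F_p$-linear, so nothing in your setup rules out such behavior. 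Indeed the exactness you assert carries real arithmetic content: already for $X$ the spectrum of a discrete valuation ring, exactness of the $q=0$ row amounts to the surjectivity of a residue map as in (\ref{eq:residue map t}), which is a theorem of Kato \cite{Kato}, not an exercise with Cousin complexes. This is why Gros--Suwa do not argue as you propose but instead run Quillen's presentation method, using the Gysin and cycle-class machinery of \cite{Gros} together with Kato's local computations.

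The same objection hits your induction on $n$: to turn the standard sequences relating $W_n\Omega^i_{\log}$, $W_{n-1}\Omega^i_{\log}$ and $\Omega^i_{\log}$ into \emph{termwise short exact} sequences of $E_1$-complexes, you need surjectivity statements on local cohomology with supports, which are again part of the theorem rather than an available input. As for the descent, your instinct is correct that the compatibility of the coniveau filtration with the Popescu colimit is the crux --- but that is precisely the content of Shiho's proof, not a detail: one must show that a class over $A$ supported in codimension $\ge s$ is already, at some finite level $(A_\lambda)_{\mathfrak{p}_\lambda}$, supported in codimension $\ge s$ (a Panin-style spreading-out, since codimension of supports is not preserved along the colimit), and ``filtered colimits are exact'' is no substitute for it. There is also the lesser point that $W_n\Omega^i_{\log}$ does not pull back to itself along non-\'etale maps, so continuity must be argued through the de Rham--Witt complex itself, which does commute with filtered colimits of rings. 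You flag the descent honestly but leave it open; combined with the failure of the Cousin-complex chase, the proposal as written establishes neither stage, and if the aim were to reprove the theorem rather than cite it, the smooth case would need a genuinely different argument and the descent the codimension control of \cite[Theorem 4.1]{Shiho07}.
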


As a consequence, we have the following. For smooth varieties, see also \cite[Proposition A.10]{BM13}.

\begin{cor}(Gersten Type Conjecture)\label{cor:Gersten}
Let $X$ be an equidimensional regular scheme over $\F_p$. Let $\calH^{q,i}(n)_X\Def R^q\varepsilon_*W_n\Omega^{i}_{X,\log}$, where $\varepsilon\colon X_{\et}\to X_{\Zar}$ is the natural map of sites. Then we have an exact sequence
\begin{equation*}
0\to H^0(X_{\Zar},\calH^{q,i}(n)_X)\to\bigoplus_{x\in X^{(0)}}H^q_x(X,W_n\Omega^i_{X,\log})\to\bigoplus_{x\in X^{(1)}}H^{q+1}_x(X,W_n\Omega^i_{X,\log}). 
\end{equation*}
\end{cor}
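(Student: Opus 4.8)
The plan is to sheafify the coniveau spectral sequence (\ref{eq:coniveau}) over the Zariski site of $X$ and to apply Theorem \ref{thm:Gersten} stalkwise, in the style of Bloch--Ogus (compare \cite[Proposition A.10]{BM13}).

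First I would upgrade the Gersten complex $B_n^{q,i}(X)^{\bullet} = E_1^{\bullet,q}$ to a complex of Zariski sheaves $\underline{B}_n^{q,i}(X)^{\bullet}$ on $X$, whose term in degree $s$ is the sheaf associated with the presheaf $U\mapsto \bigoplus_{x\in U^{(s)}}H^{s+q}_x(U,W_n\Omega^i_{X,\log})$. Since each summand $H^{s+q}_x$ depends only on the local ring $\sO_{X,x}$ by excision and is supported on the closure of $x$, this presheaf is already a flasque sheaf---a locally finite direct sum of skyscrapers---whose global sections are exactly $\bigoplus_{x\in X^{(s)}}H^{s+q}_x(X,W_n\Omega^i_{X,\log})$. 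In particular the degree $0$ and degree $1$ terms have global sections $\bigoplus_{x\in X^{(0)}}H^{q}_x(X,W_n\Omega^i_{X,\log})$ and $\bigoplus_{x\in X^{(1)}}H^{q+1}_x(X,W_n\Omega^i_{X,\log})$ respectively.

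Next I would compute the stalk of $\underline{B}_n^{q,i}(X)^{\bullet}$ at a point $x\in X$. Because local cohomology commutes with localization and the codimension filtration restricts correctly, this stalk is canonically the Gersten complex $B_n^{q,i}(\Spec\sO_{X,x})^{\bullet}$. The local ring $\sO_{X,x}$ is equidimensional regular over $\F_p$, so Theorem \ref{thm:Gersten} applies and shows that the stalk complex has cohomology concentrated in degree $0$, equal to $H^q(\Spec\sO_{X,x},W_n\Omega^i_{X,\log})$. The latter is precisely the stalk at $x$ of $\calH^{q,i}(n)_X = R^q\varepsilon_*W_n\Omega^i_{X,\log}$, since the Zariski stalk of $\calH^{q,i}(n)_X$ at $x$ is $H^q_{\et}(\Spec\sO_{X,x},W_n\Omega^i_{X,\log})$.

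Because exactness of a complex of sheaves can be tested on stalks, these two computations together say that the augmented complex
\[
0\to \calH^{q,i}(n)_X\to \underline{B}_n^{q,i}(X)^{0}\to \underline{B}_n^{q,i}(X)^{1}\to\cdots
\]
is a flasque resolution of $\calH^{q,i}(n)_X$. Applying the left-exact global sections functor to the truncation $0\to \calH^{q,i}(n)_X\to \underline{B}_n^{q,i}(X)^{0}\to \underline{B}_n^{q,i}(X)^{1}$ and inserting the global-section computations from the first step yields exactly the asserted exact sequence. The one step that requires genuine care is the identification of the stalks of the sheafified complex with the local Gersten complexes, i.e.\ that forming the coniveau complex commutes with localization; this rests on the excision property of local cohomology and on the compatibility of the codimension function with passage to $\Spec\sO_{X,x}$, and everything else in the argument is formal.
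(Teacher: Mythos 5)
Your proposal is correct and is exactly the argument the paper intends: the paper states Corollary \ref{cor:Gersten} as an immediate consequence of Theorem \ref{thm:Gersten} via the standard Bloch--Ogus-style sheafification of the coniveau complex (citing \cite[Proposition A.10]{BM13} for the smooth case), which is precisely your route of identifying the Zariski stalks of the sheafified Gersten complex with the local complexes $B_n^{q,i}(\Spec\sO_{X,x})^{\bullet}$, applying Theorem \ref{thm:Gersten} stalkwise (note $\sO_{X,x}$ is automatically equidimensional, being a regular local domain), and then taking global sections of the truncated resolution. You correctly observe that only left-exactness of global sections is needed for the stated three-term sequence, and you rightly flag the commutation of local cohomology and of $R^q\varepsilon_*$ with the filtered limit $\Spec\sO_{X,x}=\varprojlim_{U\ni x}U$ as the one point requiring care.
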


As a consequence, the cohomology functor $H^{q}(-,W_n\Omega^i_{\log})$ has the injectivity property and the codimension one purity property in \cite[Definition 2.1.4]{CT} as follows.

\begin{cor}\label{cor:inj, codim one purity}~
\begin{enumerate}
\item (Injectivity Property) If $R$ is a regular local ring over $\F_p$ with the field of fractions $K$, then the restriction map 
\[
H^{q}(R,W_n\Omega^i_{R,\log})\to H^{q}(K,W_n\Omega^i_{K,\log})
\] 
is injective. 

\item (Codimension One Purity) If $R$ is a regular local ring over $\F_p$ with the field of fractions $K$, then 
\begin{equation*}
\begin{aligned}
&\im\left(H^q(R,W_n\Omega^i_{R,\log})\to H^q(K,W_n\Omega_{K,\log}^i)\right)\\
=&\bigcap_{{\rm ht}\,\mathfrak{p}=1}\im\left(H^q(R_{\mathfrak{p}},W_n\Omega^i_{R_{\mathfrak{p}},\log})\to H^q(K,W_n\Omega^i_{K,\log})\right)
\end{aligned}
\end{equation*}
as subgroups of $H^q(K,W_n\Omega^i_{K,\log})$.
\end{enumerate}
\end{cor}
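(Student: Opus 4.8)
The plan is to deduce both assertions formally from the Gersten resolution of Corollary~\ref{cor:Gersten}, following the standard reduction of the injectivity and purity properties to a Gersten-type statement (cf.\ \cite{CT}). Put $X=\Spec R$ and suppress the coefficient sheaf $W_n\Omega^i_{\log}$ from the notation. Since $R$ is regular local, hence a domain, $X$ has a unique point of codimension zero, the generic point $\eta$, and $H^q_\eta(X)=H^q(K)$; the points of $X^{(1)}$ are the height one primes $\mathfrak{p}\subset R$, and by excision $H^{q+1}_{\mathfrak{p}}(X)$ depends only on $R_{\mathfrak{p}}$. By Theorem~\ref{thm:Gersten} the kernel of the first differential $d^0$ of the complex $B_n^{q,i}(X)^{\bullet}$ is identified, through restriction at the generic point, with $H^q(R)$; thus Corollary~\ref{cor:Gersten} amounts to the exact sequence
\begin{equation*}
0\to H^q(R)\xrightarrow{\ \mathrm{res}\ }H^q(K)\xrightarrow{\ d^0\ }\bigoplus_{\mathfrak{p}\in X^{(1)}}H^{q+1}_{\mathfrak{p}}(X).
\end{equation*}

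For part (1), injectivity of the restriction map $H^q(R)\to H^q(K)$ is exactly the left exactness of this sequence. The only thing to verify is that the arrow produced by the spectral sequence really is the restriction map; this is built into the construction of the coniveau spectral sequence, whose augmentation at the generic point is restriction to $K$.

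For part (2), the inclusion $\subseteq$ is formal: if $\alpha\in\im(H^q(R)\to H^q(K))$, then for each height one prime $\mathfrak{p}$ the factorization $H^q(R)\to H^q(R_{\mathfrak{p}})\to H^q(K)$ of the restriction map shows $\alpha\in\im(H^q(R_{\mathfrak{p}})\to H^q(K))$. For the reverse inclusion I would apply Corollary~\ref{cor:Gersten} to the discrete valuation ring $R_{\mathfrak{p}}$, a one-dimensional regular local ring with fraction field $K$; it gives exactness of $0\to H^q(R_{\mathfrak{p}})\to H^q(K)\xrightarrow{\partial_{\mathfrak{p}}}H^{q+1}_{\mathfrak{p}}(\Spec R_{\mathfrak{p}})$, so that $\im(H^q(R_{\mathfrak{p}})\to H^q(K))=\Ker\partial_{\mathfrak{p}}$. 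Under the excision isomorphism $H^{q+1}_{\mathfrak{p}}(\Spec R_{\mathfrak{p}})\cong H^{q+1}_{\mathfrak{p}}(X)$ the residue $\partial_{\mathfrak{p}}$ is the $\mathfrak{p}$-component of the global differential $d^0$. Granting this, a class $\alpha\in H^q(K)$ lies in every image $\im(H^q(R_{\mathfrak{p}})\to H^q(K))$ if and only if $\partial_{\mathfrak{p}}(\alpha)=0$ for all $\mathfrak{p}\in X^{(1)}$, i.e.\ $d^0(\alpha)=0$, i.e.\ (by the exact sequence above) $\alpha\in\im(H^q(R)\to H^q(K))$.

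The step I expect to be the main obstacle is the compatibility used in part (2): that the local residue maps $\partial_{\mathfrak{p}}$ attached to the valuation rings $R_{\mathfrak{p}}$ coincide with the components of the global Gersten differential $d^0$ (and, in the same vein, that the leftmost arrow of the Gersten complex is genuinely the restriction map). Both are manifestations of the functoriality of the coniveau, or Cousin, complex under the flat localizations $\Spec R_{\mathfrak{p}}\to X$, which is the technical heart of the Bloch--Ogus--Gabber formalism. Once this functoriality is granted, everything else is a formal consequence of the exact sequence of Corollary~\ref{cor:Gersten} applied to $R$ and to its localizations at the height one primes.
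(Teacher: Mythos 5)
Your proof is correct and follows essentially the same route as the paper, which derives the corollary purely formally from the Gersten resolution (Theorem~\ref{thm:Gersten} and Corollary~\ref{cor:Gersten}) and in fact records no further argument beyond ``as a consequence.'' The compatibilities you flag as the main obstacle --- that the augmentation of the coniveau complex is restriction to the generic point, and that the components of $d^0$ localize to the residue maps of the rings $R_{\mathfrak{p}}$ via excision --- are exactly the standard Bloch--Ogus--Gabber facts the paper implicitly invokes (cf.\ \cite[Proposition 2.1.8]{CT}), so spelling them out closes the deduction.
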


Next we will recall some basic properties of the corestriction map
\begin{equation*}
\Cor_{L/K}\colon H^i(L,W_n\Omega^j_{L,\log})\to H^i(K,W_n\Omega^j_{K,\log})
\end{equation*}
defined by Kato~(cf.\ \cite{Kato}), where $L/K$ is a finite extension of fields of characteristic $p>0$. Let $K$ be a field of characteristic $p>0$. Then as $K$ has $p$-cohomological dimension $\le 1$, the group $H^{q}_{\et}(K,W_n\Omega^i_{K,\log})$ is zero unless $q=0,1$. If $q=0$, there exists a natural isomorphism
\begin{equation*}
H^0_{\et}(K,W_n\Omega^i_{K,\log})\simeq K^{\rm M}_i(K)/p^n
\end{equation*}
(cf.\ \cite{BK}). Thus, for any finite extension $L/K$ of fields of characteristic $p>0$, the norm map $N_{L/K}\colon K^{\rm M}_i(L)\to K^{\rm M}_i(K)$ of the Milnor K-groups induces the corestriction map 
$H^0_{\et}(L,W_n\Omega^i_{L,\log})\to H^0_{\et}(K,W_n\Omega^i_{K,\log})$.

Let us consider the case $q=1$. In \cite[p.\ 658]{Kato}, Kato defined the corestriction map
\begin{equation*}
\Cor_{L/K}\colon H^1_{\et}(L,W_n\Omega^i_{L,\log})\to H^1_{\et}(K,W_n\Omega^i_{K,\log})
\end{equation*}
for any finite extension $L/K$ of fields of characteristic $p>0$. The map $\Cor_{L/K}$ is defined by using the norm maps of Quillen's K-groups $K_*(L)\to K_*(K)$~(cf.\ \cite{Quillen}). Recall also that the graded abelian group $\bigoplus_{i\ge 0}H^1_{\et}(K,W_n\Omega^i_{K,\log})$ has a natural right $\bigoplus_{i\ge 0} K^{\rm M}_i(K)/p^n$-module structure~(cf.\ \cite[p.\ 658]{Kato}). We denote by 
\begin{equation*}
[-,-\}\colon H^1_{\et}(K,W_n\Omega^i_{K,\log})\times K^{\rm M}_j(K)/p^n\to H^1_{\et}(K,W_n\Omega^{i+j}_{K,\log})
\end{equation*}
the corresponding multiplication. 

The following are some properties of the map $\Cor_{L/K}$ which we will use later as $n=1$.

\begin{enumerate}
\renewcommand{\labelenumi}{(C\arabic{enumi})}

\item\label{eq:Cor trans} (cf.\ \cite[Remark 5.1(2)]{Shiho07}) For any finite extensions $K''/K'/K$ of fields of characteristic $p>0$, one has
\begin{equation*}
\Cor_{K'/K}\circ\Cor_{K''/K'}=\Cor_{K''/K}.
\end{equation*}

\item\label{eq:proj formula 1}
 (cf.\ \cite[p.\ 658, Lemma 1(1)]{Kato}) Let $L/K$ be a finite extension of fields of characteristic $p>0$. Then for any $w\in\bigoplus_{i\ge 0}H^1_{\et}(L,W_n\Omega^i_{L,\log})$ and any $a\in\bigoplus_{i\ge 0}K^{\rm M}_i(K)/p^n$, one has
\begin{equation*}
\Cor_{L/K}([w,a_L\})=[\Cor_{L/K}(w),a\}.
\end{equation*}

\item\label{eq:proj formula 2} (cf.\ \cite[p.\ 658, Lemma 1(2)]{Kato}). Let $L/K$ be a finite extension of fields of characteristic $p>0$. Then for any $w\in\bigoplus_{i\ge 0}H^1_{\et}(K,W_n\Omega^i_{K,\log})$ and any $a\in\bigoplus_{i\ge 0}K^{\rm M}_i(L)/p^n$, one has
\begin{equation*}
\Cor_{L/K}([w_L,a\})=[w,N_{L/K}(a)\}.
\end{equation*}
In particular, one has
\begin{equation}\label{eq:Cor Res =deg}
\Cor_{L/K}\circ\Res_{L/K}=[L:K].
\end{equation}

\item\label{eq:Res vs Cor} Let $L/K$ be a finite extension of fields of characteristic $p>0$. Let $K'/K$ be an arbitrary field extension. Suppose that $L\otimes_K K'\simeq\prod_i L'_i$ a finite product of fields $L'_i$. Then we have
\begin{equation*}
\Res_{K'/K}\circ \Cor_{L/K}=\sum_i\Cor_{L'_i/K'}\circ \Res_{L_i'/L}.
\end{equation*}
By definition, this can be deduced from the corresponding property of the norm maps of Quillen's K-groups~(cf.\ \cite[Lemma 2.4]{Bloch}).    
\end{enumerate}

\begin{rem}\label{rem:div=>excellent}
The condition that $L\otimes_K K'\simeq\prod_{i}L_i'$ in the last property holds if $L/K$ or $K'/K$ is a separable extension. For example, if $K=k(X)$ is the function field of a normal variety $X$ over a field $k$ and if $x\in X^{(1)}$, then the local ring $\sO_{X,x}$ is an excellent discrete valuation ring~(cf.\ \cite[Lemma 07QU]{SP}) and hence the completion $K\hookrightarrow K_{x}\Def\Frac(\widehat{\sO}_{X,x})$ is a separable extension. Therefore, for any finite extension $L/K$, the tensor product $L\otimes_K K_x$ is isomorphic to a direct product of fields.   
\end{rem}


\section{The unramified cohomology}\label{sec:ur}

In this section, following \cite[\S2--\S4]{CT}, we recall the notion of \textit{unramified cohomology} and discuss several properties of it. Fix a base field $k$ with characteristic $p\ge 0$, a prime number $\ell$ (possibly $\ell=p$) and integers $n>0$ and $j$. Let us consider the \'etale sheaf (complex) on $k$-schemes
\[
\Z/\ell^n\Z(j)=
\begin{cases}
\mu_{\ell^n}^{\otimes j},&\text{$\ell\neq p$},\\
W_n\Omega_{\log}^j[-j],&\text{$\ell=p$},
\end{cases}
\]
where the integer $j$ is assumed to be non-negative when $\ell=p$. For any $k$-scheme $X$ and any non-negative integer $i$, we denote by $H^{i,j}(X)$  the $i$-th \'etale cohomology group with coefficients in $\Z/\ell^n\Z(j)$, i.e.\\
\begin{equation*}
H^{i,j}(X)\Def H^i_{\et}(X,\Z/\ell^n\Z(j)).
\end{equation*}
Note that when $\ell=p$, we have
$H^{i,j}(X)=H^{i-j}_{\et}(X,W_n\Omega^j_{X,\log})$.

Now we define three types of unramified cohomology  associated with the cohomology functor $H^{i,j}(-)$~(see Definitions \ref{def:un coh K/k}, \ref{def:ur coh X} and \ref{def:third ur coh}). It turns out that their value at an integral $k$-variety $X$ coincide with each other when $X$ is proper and smooth over $k$~(see Proposition \ref{prop:ur coh cmp}).

\begin{definition}\label{def:DIV}
Let $K/k$ a finitely generated field extension of $k$. We denote by $\DIV(K/k)$ the set of geometric discrete rank one valuations on $K$ over $k$. 
\end{definition}

\begin{definition}(cf.\ \cite[\S5]{BM13})\label{def:un coh K/k}
For a finitely generated field extension $K/k$, we define the \textit{unramified cohomology} $H^{i,j}_{\ur}(K/k)$ to be
\begin{equation*}
H^{i,j}_{\ur}(K/k)\Def \bigcap_{v\in\DIV(K/k)} \im\left(H^{i,j}(\sO_v)\to H^{i,j}(K)\right), 
\end{equation*}
where $\sO_v$ is the valuation ring associated with each $v$. 
\end{definition}

\begin{rem}
Note that for any $v\in\DIV(K/k)$, the valuation ring $\sO_v$ is a discrete valuation ring with the fraction field $K$. Thus, when $\ell=p$, by Corollary \ref{cor:inj, codim one purity}\,(1), the map $H^{i,j}(\sO_v)\to H^{i,j}(K)$ is injective and the group $H^{i,j}(\sO_v)$ can be naturally viewed as a subgroup of $H^{i,j}(K)$, in which case we have the equality 
\[
H^{i,j}_{\ur}(K/k)=\bigcap_{v\in\DIV (K/k)}H^{i,j}(\sO_v).
\] 
as subgroups of $H^{i,j}(K)$.
\end{rem}

\begin{rem}(Functriality Property for field extensions)
Let $L/K$ be a field extension of finitely generated fields over $k$. Then for any $w\in\DIV(K/k)$, the restriction $v\Def w|_K$ is trivial or it belongs to $\DIV(K/k)$~(cf.\ \cite[Proposition 1.4]{Mer08}). This implies that the natural restriction map $H^{i,j}(K)\to H^{i,j}(L)$ induces a map between the unramified cohomology groups $H^{i,j}_{\ur}(K/k)\to H^{i,j}_{\ur}(L/k)$ and the correspondence $K\mapsto H^{i,j}_{\ur}(K/k)$ is a covariant functor of the category of finitely generated field extensions of $k$ into the category of abelian groups.  
\end{rem}

The second kind unramified cohomology is defined as below.

\begin{definition}\label{def:ur coh X}
For a normal variety $X$ over a field $k$, we define the \textit{unramified cohomology} $H^{i,j}_{\ur}(X)$  to be
\begin{equation*}
H^{i,j}_{\ur}(X)\Def\bigcap_{x\in X^{(1)}}\im\left(H^{i,j}(\sO_{X,x})\to H^{i,j}(k(X))\right). 
\end{equation*}
\end{definition}

\begin{rem}
If $X$ is normal, then as the local ring $\sO_{X,x}$ is a discrete valuation ring with $k\subset\sO_{X,x}$ and with fraction field $k(X)$ for any codimension one point $x\in X^{(1)}$, we have an obvious inclusion
\[
H^{i,j}_{\ur}(k(X)/k)\subseteq H^{i,j}_{\ur}(X).
\] 
as subgroups of $H^{i,j}(k(X))$.
\end{rem}

Finally let us introduce the third version of the unramified cohomology associated with the cohomology functor $H^{i,j}(-)$. 

\begin{definition}\label{def:third ur coh}
For a $k$-scheme $X$, let $\varepsilon\colon X_{\et}\to X_{\Zar}$ be the natural map of sites. Then for any $i,j\ge 0$, we set $\calH^{i,j}_X\Def R^i\varepsilon_*(\Z/\ell^n\Z(j)_X)$ and consider the group
\[
H^0_{\Zar}(X,\calH^{i,j}_X)
\]
of global sections of the Zariski sheaf $\calH_X^{i,j}$ as a version of unramified cohomology attached to the cohomology group $H^{i,j}(-)$.
\end{definition}

\begin{rem}(Full functoriality property, cf.\ \cite[Remark 4.1.2]{CT})\label{rem:full functoriality}
For any morphism $f\colon X\to Y$ of schemes over $k$, there exists a natural map of \'etale sheaves $f^{-1}(\Z/\ell^n\Z(j)_Y)\to \Z/\ell^n\Z(j)_X$, 
which induces a morphism of Zariski sheaves 
\[
f^*\colon f^{-1}\calH^{i,j}_Y\to\calH^{i,j}_X
\] 
in a canonical way. In particular, we have a natural restriction map
\begin{equation}
f^*\colon H^{0}_{\Zar}(Y,\calH^{i,j}_Y)\to H^0_{\Zar}(X,\calH^{i,j}_X) 
\end{equation}
and the correspondence $X\mapsto H^0_{\Zar}(X,\calH^{i,j}_X)$ is a contravariant functor of the category of all $k$-varieties to the category of abelian groups.
\end{rem}

Let $X$ be a normal variety over $k$. 
The restriction map 
\begin{equation*}
H^0_{\Zar}(X,\calH^{i,j}_X)\to H^0_{\Zar}(k(X),\calH^{i,j}_{k(X)})=H^{i,j}(k(X))
\end{equation*} 
factors through the second type unramified cohomology group of $X$~(cf.\ Definition \ref{def:ur coh X}), i.e.
\begin{equation*}
H^0_{\Zar}(X,\calH^{i,j}_X)\to H^{i,j}_{\ur}(X)\to H^{i,j}(k(X)). 
\end{equation*}

Now we can compare the three types of unramified cohomology in the following way. 

\begin{prop}(cf.\ \cite[Theorem 4.1.1]{CT})\label{prop:ur coh cmp}
Let $X$ be a smooth variety over $k$. Then there exists a natural isomorphism of abelian groups
\begin{equation*}
H^0_{\Zar}(X,\calH^{i,j}_X)\xrightarrow{\simeq}H^{i,j}_{\ur}(X).
\end{equation*}
If $X$ is proper over $k$ in addition, then we have 
\begin{equation*}
H^0_{\Zar}(X,\calH^{i,j}_X)\xrightarrow{\simeq}H^{i,j}_{\ur}(X)=H^{i,j}_{\ur}(k(X)/k).
\end{equation*}
Furthermore, in the case when $\ell=p$, the same conclusions hold for regular varieties over $k$.
\end{prop}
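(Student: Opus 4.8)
The plan is to deduce the statement from the axiomatic framework of Colliot-Th\'el\`ene~\cite[\S2--\S4]{CT}: the cohomology functor $H^{i,j}(-)$ has been set up precisely so as to satisfy the two hypotheses needed there, the injectivity property and the codimension one purity property of \cite[Definition 2.1.4]{CT}, both recorded in Corollary \ref{cor:inj, codim one purity}. I would first stress the source of these properties, since it dictates the hypotheses: when $\ell=p$ the Gersten resolution, and hence both properties, is available for every equidimensional regular scheme over $\F_p$ by Corollary \ref{cor:Gersten} (the theorem of Gros--Suwa and Shiho), whereas when $\ell\neq p$ one must fall back on the classical Bloch--Ogus--Gabber resolution, which holds only for smooth varieties. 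This is exactly the dichotomy that forces the final sentence of the proposition to be stated for regular $X$ only in the case $\ell=p$. Granting the axioms, \cite[Theorem 4.1.1]{CT} yields the assertion, and I would reproduce the two steps below for the reader's convenience.

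For the first isomorphism I would read off $H^0_{\Zar}(X,\calH^{i,j}_X)$ from the Gersten resolution~(\ref{eq:coniveau}): Corollary \ref{cor:Gersten} (and the Bloch--Ogus--Gabber resolution when $\ell\neq p$) identifies it with the kernel of the first residue differential, so that
\[
H^0_{\Zar}(X,\calH^{i,j}_X)=\ker\Bigl(H^{i,j}(k(X))\xrightarrow{\ \oplus_x\partial_x\ }\bigoplus_{x\in X^{(1)}}H^{i+1}_x(X,\Z/\ell^n\Z(j))\Bigr)=\bigcap_{x\in X^{(1)}}\ker\partial_x.
\]
Then I would apply the same resolution to the one-dimensional regular local ring $\sO_{X,x}$ for each $x\in X^{(1)}$: using the injectivity of Corollary \ref{cor:inj, codim one purity}(1) to regard $H^{i,j}(\sO_{X,x})$ as a subgroup of $H^{i,j}(k(X))$, and the compatibility of the global residue with the local one, this gives $\ker\partial_x=\im\bigl(H^{i,j}(\sO_{X,x})\to H^{i,j}(k(X))\bigr)$. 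Intersecting over $X^{(1)}$ and comparing with Definition \ref{def:ur coh X} produces the desired isomorphism $H^0_{\Zar}(X,\calH^{i,j}_X)\xrightarrow{\simeq}H^{i,j}_{\ur}(X)$.

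For the proper case the inclusion $H^{i,j}_{\ur}(k(X)/k)\subseteq H^{i,j}_{\ur}(X)$ is automatic, since each $x\in X^{(1)}$ defines a divisorial valuation of $k(X)/k$. For the reverse inclusion I would take $\alpha\in H^{i,j}_{\ur}(X)=H^0_{\Zar}(X,\calH^{i,j}_X)$ and an arbitrary $v\in\DIV(k(X)/k)$; properness of $X$ and the valuative criterion furnish a center $x\in X$ of $v$, giving $\sO_{X,x}\subseteq\sO_v$ inside $k(X)$. The height one primes of the regular local ring $\sO_{X,x}$ are exactly the points $y\in X^{(1)}$ with $x\in\overline{\{y\}}$, and $\alpha$ is unramified at every such $y$; codimension one purity (Corollary \ref{cor:inj, codim one purity}(2)) then shows $\alpha\in\im\bigl(H^{i,j}(\sO_{X,x})\to H^{i,j}(k(X))\bigr)$. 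Restricting along $\sO_{X,x}\hookrightarrow\sO_v$ via the functoriality of Remark \ref{rem:full functoriality} places $\alpha$ in $\im\bigl(H^{i,j}(\sO_v)\to H^{i,j}(k(X))\bigr)$, so that $\alpha$ is unramified at $v$; as $v$ is arbitrary, $\alpha\in H^{i,j}_{\ur}(k(X)/k)$.

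The routine part is the comparison in the first step, which is a formal consequence of the Gersten resolution. The main obstacle is the proper case: one must simultaneously know that every divisorial valuation of $k(X)/k$ has a center on the proper variety $X$, and that unramifiedness along all codimension one points of the (possibly higher-dimensional) regular local ring $\sO_{X,x}$ suffices to extend $\alpha$ across $\Spec\sO_{X,x}$. The latter is precisely codimension one purity and is where regularity of $X$---rather than mere normality---enters; combined with the smooth-versus-regular gap between the two Gersten resolutions, it fully explains the hypotheses in the statement.
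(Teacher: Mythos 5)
Your proposal is correct and follows essentially the same route as the paper: the paper's proof simply defers to \cite[Theorem 4.1.1]{CT} for $\ell\neq p$ and, for $\ell=p$, invokes the same argument (via \cite[Proposition 2.1.8]{CT}) with the codimension one purity of Corollary \ref{cor:inj, codim one purity}\,(2) substituted for \cite[Theorem 3.8.2]{CT}, noting exactly as you do that Shiho's Gersten resolution for arbitrary regular local rings over $\F_p$ is what allows the regular (rather than smooth) hypothesis when $\ell=p$. Your write-up merely unpacks that cited argument --- reading $H^0_{\Zar}(X,\calH^{i,j}_X)$ off the Gersten complex, and using the valuative criterion plus purity at the center $\sO_{X,x}\subseteq\sO_v$ for the proper case --- which matches the intended proof step for step (the only cosmetic quibble being that the restriction along $\sO_{X,x}\hookrightarrow\sO_v$ needs only ordinary functoriality of \'etale cohomology, not Remark \ref{rem:full functoriality}).
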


\begin{proof}
In the case when $\ell\neq p$, all the statements are included in \cite[Theorem 4.1.1]{CT}. In the case when $\ell=p$, the same proof as there works by replacing the codimension one purity theorem \cite[Theorem 3.8.2]{CT} with our version for logarithmic Hodge-Witt cohomology (see Corollary \ref{cor:inj, codim one purity}\,(2)) and by applying the argument given in \cite[Proposition 2.1.8]{CT}. The last statement is thanks to the fact that the codimension one purity  theorem for $\ell=p$ holds for arbitrary regular local rings over $\F_p$.
\end{proof}
 
As a consequence, we have the following.

\begin{cor}\label{cor:ur coh cmp}~
\begin{enumerate}
\item (Birational Invariance) With the above notation, the unramified cohomology $H^{i,j}_{\ur}(X)$ is a $k$-birational invariant for proper smooth $k$-varieties.  
\item (Functoriality Property) The correspondence $X\mapsto H^{i,j}_{\ur}(X)$ is a contravariant functor of the category of smooth $k$-varieties into the category of abelian groups. Furthermore, in the case when $\ell=p$, the same functoriality property can be extended to the category of regular integral schemes over $k$. 
\end{enumerate}
\end{cor}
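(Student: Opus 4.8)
The plan is to deduce Corollary \ref{cor:ur coh cmp} directly from Proposition \ref{prop:ur coh cmp}, which already does all of the hard work. The two assertions are formal consequences of the natural isomorphism $H^0_{\Zar}(X,\calH^{i,j}_X)\xrightarrow{\simeq}H^{i,j}_{\ur}(X)$ together with the full functoriality of the left-hand side recorded in Remark \ref{rem:full functoriality}. The idea is simply to transport the contravariant functoriality of $X\mapsto H^0_{\Zar}(X,\calH^{i,j}_X)$ across this isomorphism and then invoke the proper case for birational invariance.

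For part (2), I would argue as follows. By Remark \ref{rem:full functoriality}, the correspondence $X\mapsto H^0_{\Zar}(X,\calH^{i,j}_X)$ is a contravariant functor on all $k$-varieties (indeed on all $k$-schemes), so any morphism $f\colon X\to Y$ induces $f^*\colon H^0_{\Zar}(Y,\calH^{i,j}_Y)\to H^0_{\Zar}(X,\calH^{i,j}_X)$. Restricting to smooth $k$-varieties, Proposition \ref{prop:ur coh cmp} furnishes natural isomorphisms $H^0_{\Zar}(X,\calH^{i,j}_X)\xrightarrow{\simeq}H^{i,j}_{\ur}(X)$ and likewise for $Y$, and I define the pullback on unramified cohomology to be the map that makes the evident square commute. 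Naturality of the isomorphism in Proposition \ref{prop:ur coh cmp} guarantees this is well defined and that composition is respected, so $X\mapsto H^{i,j}_{\ur}(X)$ is a contravariant functor on smooth $k$-varieties. In the case $\ell=p$, the last sentence of Proposition \ref{prop:ur coh cmp} extends the comparison isomorphism to regular varieties, so the same reasoning produces a contravariant functor on regular integral $k$-schemes.

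For part (1), birational invariance for proper smooth varieties, I would combine functoriality with the proper-case identity $H^{i,j}_{\ur}(X)=H^{i,j}_{\ur}(k(X)/k)$ from Proposition \ref{prop:ur coh cmp}. Given proper smooth $X$ and $Y$ with a $k$-birational map, both $H^{i,j}_{\ur}(X)$ and $H^{i,j}_{\ur}(Y)$ are identified with the subgroup $H^{i,j}_{\ur}(k(X)/k)=H^{i,j}_{\ur}(k(Y)/k)$ of the common function field's cohomology, since a $k$-birational equivalence induces an isomorphism $k(X)\xrightarrow{\simeq}k(Y)$ of fields over $k$. As $H^{i,j}_{\ur}(K/k)$ depends only on the field $K/k$ by Definition \ref{def:un coh K/k}, the two unramified cohomology groups coincide, giving the desired $k$-birational invariance.

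The main point requiring care is the \emph{naturality} of the isomorphism in Proposition \ref{prop:ur coh cmp}: to upgrade an isomorphism of groups into an isomorphism of functors I must check that the comparison map is compatible with pullback along morphisms of smooth (resp.\ regular) varieties, so that the defining square commutes. This is not an additional theorem but an observation about the construction in \cite[Theorem 4.1.1]{CT}, where the isomorphism arises from the edge map of the coniveau spectral sequence (\ref{eq:coniveau}) and the identification of $H^0_{\Zar}(X,\calH^{i,j}_X)$ with the kernel described in Corollary \ref{cor:Gersten}; both are functorial in $X$ by the functoriality (\ref{eq:log HW}) of the logarithmic Hodge--Witt sheaves. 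Hence the isomorphism is natural, and parts (1) and (2) follow with no further input.
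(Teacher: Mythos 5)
Your proof is correct and takes essentially the same route as the paper: part (2) is obtained by transporting the full functoriality of $X\mapsto H^0_{\Zar}(X,\calH^{i,j}_X)$ from Remark \ref{rem:full functoriality} across the comparison isomorphism of Proposition \ref{prop:ur coh cmp} (with the last sentence of that proposition handling the regular case when $\ell=p$), and part (1) follows from the identification $H^{i,j}_{\ur}(X)=H^{i,j}_{\ur}(k(X)/k)$ for proper smooth varieties, exactly as in the paper. Your explicit verification that the comparison isomorphism is natural (via the functoriality of the coniveau spectral sequence and the Gersten resolution) is a point the paper leaves implicit by deferring to \cite[Remark 4.1.2]{CT}, but it is the same underlying argument.
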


\begin{proof}~
\begin{enumerate}
\item As mentioned in \cite[Proposition 2.1.8\,(e)]{CT}, the claim is immediate from the equivalence between Definitions \ref{def:un coh K/k} and \ref{def:ur coh X} for proper smooth (or regular) $k$-varieties in Proposition \ref{prop:ur coh cmp}.

\item As discussed in \cite[Remark 4.1.2]{CT}, the claim follows from the equivalence between Definitions \ref{def:ur coh X} and \ref{def:third ur coh} for  smooth (or regular) $k$-varieties in Proposition \ref{prop:ur coh cmp} together with Remark \ref{rem:full functoriality}. In the case when $\ell=p$, this follows also from the injectivity property (Corollary \ref{cor:inj, codim one purity}\,(1)) by applying the argument in \cite[Proposition 2.1.10]{CT}.
\end{enumerate}
\end{proof}

\begin{rem}\label{rem:ur coh cmp dim=1}
Let us assume that $\ell=p$. Let $C$ be a normal $k$-curve. Then for any $i,j\ge 0$, the restriction map $H^{i,j}(C)\to H^{i,j}(k(C))$ induces a surjective homomorphism 
$H^{i,j}(C)\twoheadrightarrow H^{i,j}_{\ur}(C)$. 
Indeed, as $C$ is of dimension one, this follows from the coniveau spectral sequence 
$E_1^{s,t}=\bigoplus_{x\in C^{(s)}}H^{s+t,,j}_x(C)\Rightarrow E^{s+t}=H^{s+t,j}(C)$  
together with Proposition \ref{prop:ur coh cmp}. 
Therefore, if the restriction map $H^{i,j}(C)\to H^{i,j}(k(C))$ is injective, we have an isomorphism $H^{i,j}(C)\xrightarrow{\simeq}H^{i,j}_{\ur}(C)$.  
\end{rem}

\begin{rem}\label{rem:A^0}
If $\ell\neq p$, then the cohomology group $H^{i,j}(-)=H^i_{\et}(-,\mu_{\ell^n}^{\otimes j})$ gives rise to a Rost's cycle module $M_*$ over $k$ as below
\[
M_*(-)\Def\bigoplus_{*\ge -i}H^{i+*}(-,\mu_{\ell^n}^{\otimes j+*})
\]
(see \cite[Remarks 1.11 and 2.5]{Rost}), and for any smooth variety $X$ over $k$, the unramified cohomology $H^{i,j}_{\ur}(X)$ is canonically identified with the Chow group $A^0(X,M_0)$ with coefficients in $M_0$ in the sense of Rost\,(cf.\ \cite[\S5]{Rost}), i.e.
\[
H^{i,j}_{\ur}(X)\simeq A^0(X,M_0).
\] 

In the case where $\ell=p$, the same description of unramified cohomology in terms of cycle modules still holds when $i=j$. Namely, for the cohomology group $H^{i,i}(-)=H^0(-,W_n\Omega^i_{\log})$, we have
\[
H^{i,i}_{\ur}(X)\simeq A^0(X,K_i^{\rm M}/p^n).
\]
However, as remarked in the first paragraph in the next section, Rost's theory of cycle modules cannot be directly adapted for the unramified cohomology group $H^{i+1,i}_{\ur}(X)$. 
\end{rem}


\section{The unramified curve-tame cohomology}\label{sec:ur tame}

Let us restrict our attention to the mod $p$ \'etale motivic cohomology group
\begin{equation*}
H^{i+1,i}(X)=H^{i+1}_{\et}(X,\Z/p\Z(i))=H^1_{\et}(X,\Omega^i_{X,\log}).
\end{equation*}
In the case $i=0$, we have $H^{1,0}(X)=H^1_{\et}(X,\Z/p\Z)$. If $i=1$, then the group $H^{2,1}(X)$ can be identified with $H^2_{\fppf}(X,\mu_p)$. Let $k$ be a field of characteristic $p>0$ and let us consider the affine line $X=\A^1_k$. For these cases $i=0,1$, by purity, we obtain
\begin{equation*}
H^{i+1,i}(\A^1_k)\xrightarrow{\simeq}H^{i+1,i}_{\ur}(\A^1_k) 
\end{equation*}
(Remark \ref{rem:ur coh cmp dim=1}). As is well-known, the natural map $H^{i+1,i}(k)\to H^{i+1,i}(\A^1_k)$ is far from isomorphic (in general) in both the cases $i=0,1$. Therefore, the unramified cohomology $H^{i+1,i}_{\ur}(X)$ is not $\A^1$-homotopy invariant. Hence, the collection of $\Z$-graded abelian groups $\left\{\bigoplus_{i\ge 0}H^{i+1,i}(K)\right\}_{K}$ does not form a cycle module in the sense of Rost\cite{Rost}. To remedy the situation, we will consider  \textit{tame} subgroups of $H^{i+1,i}(K)$~(cf.\ \cite{Kato82}\cite{Izhboldin}\cite{GMS}\cite{ABBB tame}\cite{Totaro20}).

We begin with the local case. For a complete discrete valuation field $K$ of characteristic $p>0$, we define the \textit{tame cohomology group} $H^{i+1,i}_{\tame}(K)$ to be the kernel of the restriction map $H^{i+1,i}(K)\to H^{i+1,i}(K^t)$, where $K^t$ is a maximal tamely ramified extension of $K$~(cf.\ \cite[\S4]{Totaro20}). Note that as $H^{i+1,i}(K)$ is $p$-torsion, the tame cohomology group $H^{i+1,i}_{\tame}(K)$ can be written also as the kernel of the restriction homomorphism $H^{i+1,i}(K)\to H^{i+1,i}(K^{ur})$, where $K^{ur}$ is the maximal unramified extension of $K$~(cf.\ \cite[Remark 3.7]{ABBB tame}).

For a field $K$ of characteristic $p>0$ with a geometric discrete rank one valuation $v$, we define the \textit{tame cohomology group} $H^{i+1,i}_{\tame, v}(K)\subset H^{i+1,i}(K)$ to be the inverse image of the tame cohomology $H^{i+1,i}_{\tame}(K_v)$ via the restriction map $H^{i+1,i}(K)\to H^{i+1,i}(K_v)$, where $K_v$ is the completion of $K$ with respect to the valuation $v$. Then, according to \cite[Theorem 4.3]{Totaro20}~(see also \cite[\S 2, Corollary 2.7]{Izhboldin} for complete discrete valuation fields), there exists a homomorphism $\partial_v\colon H^{i+1,i}_{\tame,v}(K)\to H^{i,i-1}(k(v))$ called the \textit{residue map} at $v$ which fits into the short exact sequence
\begin{equation}\label{eq:residue map t}
0\to H^{i+1,i}(\sO_v)\to H^{i+1,i}_{\tame,v}(K)\xrightarrow{\partial_v} H^{i,i-1}(k(v))\to 0,
\end{equation}
where $\sO_v$ is the valuation ring of $v$.

The residue map $\partial_v$ can be described as follows. Recall that we have an exact sequence of $\F_p$-vector spaces~(cf.\ (\ref{eq:log HW 1-F})), 
\begin{equation*}
0\to H^{i,i}(K)\to\Omega^{i}_K\xrightarrow{F-1}\Omega^i_{K}/d\Omega_K^{i-1}\to H^{i+1,i}(K)\to 0.
\end{equation*}
We denote by $[f,g_1,\dots,g_i\}$ the image in $H^{i+1,i}(K)$ of the differential form 
\[
f\,\dfrac{dg_1}{g_1}\wedge\cdots\wedge\dfrac{dg_i}{g_i}~\in~\Omega^{i}_{K}.
\] 
Then the tame cohomology group $H^{i+1,i}_{\tame,v}(K)$ is generated by the elements of the form
\begin{equation*}
[ f, g_1,\dots,g_i\}
\end{equation*}
where $f\in K,g_1,\dots,g_i\in K^{*}$ satisfying $v(f)\ge 0$~(cf.\ \cite[Theorem 4.3]{Totaro20}). The residue map $\partial_v\colon H^{i+1,i}_{\tame,v}(K)\to H^{i,i-1}(k(v))$ is now uniquely characterized by the following formula
\begin{equation*}
\partial_v([f, g_1,\dots,g_i\}
)=
\begin{cases}
[\overline{f},\overline{g}_2,\dots,\overline{g}_i\}&\text{if $v(g_1)=1$ and $v(g_i)=0$ for $i\neq 1$},\\
0&\text{if $v(g_i)=0$ for any $i$},
\end{cases}
\end{equation*}
where $\overline{f}$ and $\overline{g_i}$ mean the images of $f$ and $g_i$ respectively in the residue field $k(v)$. It follows immediately from this characterization that for any generator $[f,g_1,\dots,g_i\}\in H^{i+1,i}_{\tame,v}(K)$, we have
\begin{equation}\label{eq:partial_v vs partial_v^M}
\partial_v([f,g_1,\dots,g_i\})=[\overline{f},\partial_v^{\rm M}(\{g_1,\dots,g_i\})\},
\end{equation}
where $\partial_v^{\rm M}$ is the tame symbol of the Milnor $K$-group~(cf.\ \cite[Proposition 7.1.4]{GS06}).

\begin{lem}\label{lem:s_v dvr}
Let $K$ be a field of positive characteristic $p>0$ and $v$ a geometric discrete rank one valuation on $K$. Let $\sO_v$ be the valuation ring and $k(v)$ the residue field. Fix a uniformizer $\pi\in\sO_v$. Then the natural reduction map of cohomology groups $H^{i+1,i}(\sO_v)\to H^{i+1,i}(k(v))$ coincides with the composition of maps
\begin{equation*}
H^{i+1,i}(\sO_v)\xrightarrow{[-,\pi\}}H^{i+2,i+1}_{\tame,v}(K)\xrightarrow{\partial_v}H^{i+1,i}(k(v))
\end{equation*}
up to multiplication by $(-1)^i$.
\end{lem}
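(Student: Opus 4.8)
The plan is to verify the asserted identity on a generating set of $H^{i+1,i}(\sO_v)$ by computing both the reduction map and the composition $\partial_v\circ[-,\pi\}$ explicitly on symbols. Throughout I would regard $H^{i+1,i}(\sO_v)$ as a subgroup of $H^{i+1,i}(K)$ via the injectivity property (Corollary \ref{cor:inj, codim one purity}(1)); by the residue exact sequence \eqref{eq:residue map t} it is exactly $\ker\bigl(\partial_v\colon H^{i+1,i}_{\tame,v}(K)\to H^{i,i-1}(k(v))\bigr)$, so every element is tame and unramified. First I would reduce to the claim that $H^{i+1,i}(\sO_v)$ is generated by symbols $[f,u_1,\dots,u_i\}$ with $f\in\sO_v$ and $u_1,\dots,u_i\in\sO_v^{\times}$, together with the fact that on such a symbol the reduction map $s_v$ sends $[f,u_1,\dots,u_i\}$ to $[\bar f,\bar u_1,\dots,\bar u_i\}$. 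The latter is immediate from the functoriality \eqref{eq:log HW} of $\dlog$ along the closed immersion $\Spec k(v)\to\Spec\sO_v$, equivalently from the naturality of the sequence \eqref{eq:log HW 1-F} in the regular local rings $\sO_v$ and $k(v)$.

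Granting this, the computation on a generator $w=[f,u_1,\dots,u_i\}$ is short. By the right $\bigoplus_{j}K^{\rm M}_j(K)/p$-module structure, multiplication by $\{\pi\}$ simply appends $\dlog\pi$, so $[w,\pi\}=[f,u_1,\dots,u_i,\pi\}$, which lies in $H^{i+2,i+1}_{\tame,v}(K)$ because $v(f)\ge 0$. Reordering the wedge to bring $\dlog\pi$ to the front costs the sign $(-1)^i$, i.e.\ $[f,u_1,\dots,u_i,\pi\}=(-1)^i[f,\pi,u_1,\dots,u_i\}$. Now, since $v(\pi)=1$ and $v(u_j)=0$ for all $j$, the characterization of $\partial_v$, equivalently \eqref{eq:partial_v vs partial_v^M} applied to the tame symbol $\partial_v^{\rm M}(\{\pi,u_1,\dots,u_i\})=\{\bar u_1,\dots,\bar u_i\}$, gives $\partial_v([f,\pi,u_1,\dots,u_i\})=[\bar f,\bar u_1,\dots,\bar u_i\}$. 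Hence $\partial_v([w,\pi\})=(-1)^i[\bar f,\bar u_1,\dots,\bar u_i\}=(-1)^i s_v(w)$. As both $s_v$ and $\partial_v\circ[-,\pi\}$ are homomorphisms, this equality on generators yields $s_v=(-1)^i\,\partial_v\circ[-,\pi\}$ on all of $H^{i+1,i}(\sO_v)$, which is the assertion.

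The hard part will be the generation statement, namely that every unramified class is a sum of symbols with unit entries $u_j\in\sO_v^{\times}$. Starting from the known generators $[f,g_1,\dots,g_i\}$ with $v(f)\ge 0$ of the tame group, writing $g_j=\pi^{a_j}u_j$ and expanding $\dlog g_j=a_j\,\dlog\pi+\dlog u_j$, one sees using $\dlog\pi\wedge\dlog\pi=0$ that each generator is the sum of a unit-entry symbol and symbols containing exactly one $\dlog\pi$; applying $\partial_v$ kills the unit-entry part, so the condition $\partial_v w=0$ becomes a relation among residues in $H^{i,i-1}(k(v))$, and the delicate point is to lift such a relation back to $H^{i+1,i}_{\tame,v}(K)$ modulo unit-entry symbols. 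I would circumvent this by instead identifying $H^{i+1,i}(\sO_v)$ with a quotient of the integral differentials $\Omega^i_{\sO_v}$ through the long exact sequence attached to \eqref{eq:log HW 1-F} over the regular local ring $\sO_v$ (the relevant higher \'etale cohomology of the quasi-coherent term $\Omega^i_{\sO_v}$ vanishing on the affine local scheme $\Spec\sO_v$). Under this description the reduction map is literally reduction of forms $\omega\mapsto\bar\omega$, and the residue formula extends to $\partial_v([\omega\wedge\dlog\pi])=(-1)^i[\bar\omega]$ for $\omega\in\Omega^i_{\sO_v}$ by the same multilinear expansion, so the two displayed maps agree up to $(-1)^i$ without any separate generation or relation-lifting argument.
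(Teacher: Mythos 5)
Your proposal is correct, and its computational core coincides exactly with the paper's proof: the paper's entire argument is the one-line observation that $H^{i+1,i}(\sO_v)$ is generated by symbols $[f,u_1,\dots,u_i\}$ with $f\in\sO_v$ and $u_j\in\sO_v^{\times}$, upon which the formula (\ref{eq:partial_v vs partial_v^M}) (with the sign $(-1)^i$ from permuting $\dlog\pi$ past $i$ logarithmic factors) gives the claim --- this is precisely your first two paragraphs. Where you genuinely diverge is in the third paragraph: the paper simply asserts the unit-symbol generation of $H^{i+1,i}(\sO_v)$, and you correctly isolate this as the only nontrivial input. Your workaround is sound and arguably cleaner than proving generation outright: presenting $H^{i+1,i}(\sO_v)$ as $\Cok\bigl(1-F\colon\Omega^i_{\sO_v}\to\Omega^i_{\sO_v}/d\Omega^{i-1}_{\sO_v}\bigr)$ via the sequence (\ref{eq:log HW 1-F}) and quasi-coherent vanishing on the affine local scheme, generation of $\Omega^i_{\sO_v}$ by decomposable forms is trivial, and the two problematic features cancel against each other --- every term of an expansion $g_j=\pi^{a_j}u_j$ involving $d\pi$ is killed both by wedging with $\dlog\pi$ (since $d\pi\wedge d\pi=0$) and by the reduction $\Omega^i_{\sO_v}\to\Omega^i_{k(v)}$ (since $d\pi\mapsto 0$), so $\partial_v([\omega\wedge\dlog\pi])=(-1)^i[\overline{\omega}]$ holds on the nose without integration by parts or any lifting of relations among residues. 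The paper's assertion is not actually a gap (unit-symbol generation follows from the same affine surjection $\Omega^i_{\sO_v}/d\Omega^{i-1}_{\sO_v}\twoheadrightarrow H^{i+1,i}(\sO_v)$ plus integration by parts modulo $d\Omega^{i-1}$, in the style of the generation statement for $H^{i+1,i}_{\tame,v}(K)$ quoted from \cite[Theorem 4.3]{Totaro20}), but your route makes the lemma self-contained, at the modest cost of invoking the identification of $H^0_{\et}$ of the quotient sheaf $\Omega^i/d\Omega^{i-1}$ with the naive module quotient over the affine base, which is standard in this setting. Your sign bookkeeping, the identification of $H^{i+1,i}(\sO_v)$ with $\Ker\partial_v$ via (\ref{eq:residue map t}), and the use of the injectivity property of Corollary \ref{cor:inj, codim one purity}\,(1) all match the paper's conventions.
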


\begin{proof}
Noticing that the subgroup $H^{i+1,i}(\sO_v)$ is generated by the elements $[f,g_1,\dots,g_i\}$ with $f\in\sO_v$ and $g_1,\dots,g_i\in\sO_v^{*}$, 
the lemma is immediate from the description of the residue map (\ref{eq:partial_v vs partial_v^M}). 
\end{proof}

\begin{definition}\label{def:tame coh K/k}
Let $k$ be a field of characteristic $p>0$ and $K$ a finitely generated field extension of $k$. We define the \textit{tame cohomology} $H^{i+1,i}_{\tame}(K/k)$ to be
\begin{equation*}
H^{i+1,i}_{\tame}(K/k)\Def\bigcap_{v\in\DIV(K/k)}H^{i+1,i}_{\tame,v}(K)\subset H^{i+1,i}(K).
\end{equation*} 
\end{definition}

Note that for any $v\in\DIV(K/k)$, the graded subspace $\bigoplus_{i\ge 0}H^{i+1,i}_{\tame,v}(K)\subset \bigoplus_{i\ge 0}H^{i+1,i}(K)$ is stable under the multiplication map
\begin{equation*}
[-,-\}\colon H^{i+1,i}(K)\times K_j^{\rm M}(K)/p\to H^{i+j+1,i+j}(K)
\end{equation*}
(cf.\ \cite[p.\ 153]{GMS}), and hence $\bigoplus_{i\ge 0}H^{i+1,i}_{\tame,v}(K)$ is a graded $\bigoplus_{i\ge 0}K^{\rm M}_i(K)/p$-submodule of $\bigoplus_{i\ge 0}H^{i+1,i}(K)$. Therefore, the graded submodule $\bigoplus_{i\ge 0}H^{i+1,i}_{\tame}(K/k)$ is also a graded $\bigoplus_{i\ge 0}K^{\rm M}_i(K)/p$-submodule of the group $\bigoplus_{i\ge 0}H^{i+1,i}(K)$.

It is immediate from the exact sequence (\ref{eq:residue map t}) that there exists a short exact sequence
\begin{equation}\label{eq:tame vs ur}
0\to H^{i+1,i}_{\ur}(K/k)\to H^{i+1,i}_{\tame}(K/k)\xrightarrow{(\partial_{v})}\bigoplus_{v\in\DIV(K/k)}H^{i,i-1}(k(v)).
\end{equation}

Moreover, we have the following.

\begin{thm}(cf.\ \cite[Theorem 4.4]{Totaro20})\label{thm:mod p Faddeev}
Let $k$ be a field of characteristic $p>0$. For any $i\ge 0$, there exists an exact sequence of $\F_p$-vector spaces, 
\begin{equation*}
0\to H^{i+1,i}(k)\to H^{i+1,i}_{\tame}(k(t)/k)\xrightarrow{(\partial_x)_x}\bigoplus_{x\in\bbP^1_{k(0)}}H^{i,i-1}(k(x))\xrightarrow{\sum_x\Cor_{k(x)/k}} H^{i,i-1}(k)\to 0. 
\end{equation*}
\end{thm}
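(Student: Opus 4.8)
\emph{Strategy.} I would prove this as a cohomological Faddeev sequence, obtained by transporting Milnor's dévissage for the Faddeev sequence in Milnor $K$-theory to the graded $\bigoplus_i K^{\rm M}_i(k(t))/p$-module $\bigoplus_i H^{i+1,i}(k(t))$. The two structural inputs are the local residue sequence (\ref{eq:residue map t}) and the compatibility (\ref{eq:partial_v vs partial_v^M}), $\partial_v([f,g_1,\dots,g_i\})=[\overline{f},\partial_v^{\rm M}(\{g_1,\dots,g_i\})\}$, which expresses the cohomological residue through the Milnor tame symbol on the symbol slots while the ``function slot'' $f$ is merely reduced. I would first record that, since $k(t)$ is the function field of the unique smooth proper model $\bbP^1_k$, the divisorial valuations $\DIV(k(t)/k)$ are exactly the closed points of $\bbP^1_k$; hence by (\ref{eq:tame vs ur}) and Proposition \ref{prop:ur coh cmp} the kernel of $(\partial_x)_x$ is $H^{i+1,i}_{\ur}(\bbP^1_k)$, so the claim amounts to computing this kernel together with the image and cokernel of $(\partial_x)_x$.

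\emph{Injectivity and the cokernel.} Injectivity of $H^{i+1,i}(k)\to H^{i+1,i}_{\tame}(k(t)/k)$ is formal: choosing the rational place $t=0$ (uniformizer $t$, residue field $k$), the reduction $H^{i+1,i}(\sO_{v_0})\to H^{i+1,i}(k)$ splits the restriction map, since a class pulled back from $k$ is everywhere unramified and reduces back to itself. Surjectivity of $\sum_x\Cor_{k(x)/k}$ is equally formal, as a rational point $x$ contributes $\Cor_{k/k}=\mathrm{id}$. The vanishing of the composite $\sum_x\Cor_{k(x)/k}\circ(\partial_x)_x$ I would reduce, using (\ref{eq:partial_v vs partial_v^M}) and the projection formula (C3), to the identity $\sum_x N_{k(x)/k}\partial_x^{\rm M}=0$, i.e.\ Weil reciprocity for Milnor $K$-theory on $\bbP^1_k$: for a generator $[c,g_1,\dots,g_i\}$ with $c\in k$ one computes $\sum_x\Cor_{k(x)/k}[c,\partial_x^{\rm M}\{g_1,\dots,g_i\}\}=[c,\sum_x N_{k(x)/k}\partial_x^{\rm M}\{g_1,\dots,g_i\}\}=0$.

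\emph{Exactness in the middle (dévissage).} The heart is the surjection of $(\partial_x)_x$ onto $\ker(\sum_x\Cor)$ together with the identification of $\ker(\partial_x)_x$ with $H^{i+1,i}(k)$. Here I would filter by the degrees of the \emph{symbol} entries $g_1,\dots,g_i$, letting the residues at the degree-$d$ places identify the $d$-th graded piece with $\bigoplus_{\deg\pi=d}H^{i,i-1}(k(\pi))$ exactly as in Milnor's argument: (\ref{eq:partial_v vs partial_v^M}) reduces each graded step and each lifting of a generator $[\overline{f},\{\overline{g}_1,\dots,\overline{g}_i\}\}$ of $H^{i,i-1}(k(\pi))$ to the corresponding Milnor-theoretic step. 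Tracking the corestriction relation at the single remaining place $\infty$ then upgrades the split exact sequence over $\A^1_k$ to the one over $\bbP^1_k$, which is the asserted exactness of the reciprocity map.

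\emph{Main obstacle.} The genuine difficulty, absent in Milnor $K$-theory, is the function slot $f$: the functor $H^{i+1,i}$ is not $\A^1$-homotopy invariant (as the paper stresses for $\A^1_k$), so a naive degree filtration also detects Artin--Schreier classes that are unramified at every finite place but ramified at $\infty$, and the bottom of that filtration is $H^{i+1,i}(\A^1_k)$ rather than $H^{i+1,i}(k)$. The crucial point is therefore that a nonconstant $f$ is \emph{wildly} ramified at $\infty$, so imposing tameness at $\infty$ forces the everywhere-tame classes down to those with $f\in k$, collapsing $H^{i+1,i}(\A^1_k)$ onto $H^{i+1,i}(k)$. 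Making this collapse precise — equivalently, showing that $H^{i+1,i}_{\tame}(k(t)/k)$ is generated by globally tame symbols $[c,g_1,\dots,g_i\}$ with $c\in k$ — is where the argument must depart from the Milnor template, and is the step I expect to cost the most work.
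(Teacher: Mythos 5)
First, for context: the paper contains no proof of Theorem \ref{thm:mod p Faddeev} at all. It is imported wholesale from \cite[Theorem 4.4]{Totaro20} (the ``cf.'' in the statement), which in turn rests on Izhboldin's computation of the mod $p$ cohomology of rational function fields \cite{Izhboldin}; so your attempt has to be measured against those published arguments rather than anything internal to the paper. The formal outer parts of your sketch are fine: $\DIV(k(t)/k)$ is indeed $(\bbP^1_k)_{(0)}$, the kernel of $(\partial_x)_x$ on the tame subgroup is $H^{i+1,i}_{\ur}(k(t)/k)$ by (\ref{eq:tame vs ur}), injectivity follows by specializing at a rational point, and surjectivity of $\sum_x\Cor_{k(x)/k}$ is trivial from a rational point. (One remark on architecture: the paper deduces $H^{i+1,i}(k)\simeq H^{i+1,i}_{\ur}(\bbP^1_k)$, Proposition \ref{prop:t, ur coh}(3), \emph{from} this theorem, so you are right not to assume it; it must come out of the d\'evissage.)

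The genuine gap is exactly the step you flagged as the main obstacle, but it is worse than ``hard'': the lemma you propose to prove there is false for $i\ge 1$, and in fact contradicts the exactness you are trying to establish. If $H^{i+1,i}_{\tame}(k(t)/k)$ were generated by constant-slot symbols $[c,g_1,\dots,g_i\}$ with $c\in k$, then by (\ref{eq:partial_v vs partial_v^M}) every residue $\partial_x$ of a tame class would lie in the subgroup of $H^{i,i-1}(k(x))$ whose function slot is restricted from $k$. Now take $i=1$ and a closed point $x\in\bbP^1_k$ with $d=[k(x):k]$ separable and prime to $p$ such that $\Res_{k(x)/k}\colon H^{1,0}(k)\to H^{1,0}(k(x))$ is not surjective (e.g.\ $k=\F_p(u)$ and a quadratic or cubic point; Swan conductors at a place of $k(x)$ show non-surjectivity). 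Since $\Cor_{k(x)/k}\circ\Res_{k(x)/k}=d$ is invertible on these $p$-torsion groups, $H^{1,0}(k(x))=\im(\Res_{k(x)/k})\oplus\ker(\Cor_{k(x)/k})$, so there exists $\beta\in\ker(\Cor_{k(x)/k})$ with $\beta\notin\im(\Res_{k(x)/k})$. The tuple $\beta$ at $x$ and $0$ elsewhere lies in $\ker(\sum_x\Cor_{k(x)/k})$, hence by the theorem lifts to a tame class $\alpha$ with $\partial_x\alpha=\beta$; but any sum of constant-slot symbols has $\partial_x[c,g\}=v_x(g)\,c_{k(x)}\in\im(\Res_{k(x)/k})$, a contradiction. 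So tame classes with irreducibly non-constant function slot exist, your middle d\'evissage (whose graded pieces you never actually identify once the $f$-slot must also be filtered) cannot close, and your verification of the complex property via Weil reciprocity covers only the proper subgroup of constant-slot classes. The subsidiary assertion that ``a nonconstant $f$ is wildly ramified at $\infty$'' also fails, e.g.\ $f=1/t\in\sO_\infty$. The actual proofs avoid this trap by working with the presentation $H^{i+1,i}(k(t))=\Cok\bigl(F-1\colon\Omega^i_{k(t)}\to\Omega^i_{k(t)}/d\Omega^{i-1}_{k(t)}\bigr)$ from (\ref{eq:log HW 1-F}) and a partial-fraction/pole-order filtration of differential forms, which tracks arbitrary function slots and identifies graded pieces with the groups $H^{i,i-1}(k(x))$ only after dividing by the image of $F-1$; your Milnor-template filtration breaks not because the collapse is delicate, but because the intended generation statement is untrue.
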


In other words, the tame cohomology satisfies the  the \textit{homotopy property} for $\A^1$ and the \textit{reciprocity} for the projective line $\bbP^1$ in \cite[\S2]{Rost}. 

\begin{rem}\label{rem:H, RC}~
\begin{enumerate}
\item (Homotopy Property for $\A^1$) The short exact sequence
\begin{equation*}
0\to H^{i+1,i}(k)\to H^{i+1,i}_{\tame}(k(t)/k)\xrightarrow{(\partial_x)_x}\bigoplus_{x\in\A^1_{k(0)}}H^{i,i-1}(k(x))\to 0, 
\end{equation*}
is exact.
 
\item (Reciprocity for $\bbP^1$) The sequence
\begin{equation*}
H^{i+1,i}_{\tame}(k(t)/k)\xrightarrow{(\partial_x)_x}\bigoplus_{x\in\bbP^1_{k(0)}}H^{i,i-1}(k(x))\xrightarrow{\sum_x\Cor_{k(x)/k}} H^{i,i-1}(k)
\end{equation*}
is a complex. 
\end{enumerate}
\end{rem}

\begin{rem}(Reciprocity for Curves) More generally, for any $k$-curve $C$, it turns out that the sequence
\[
H^{i+1,i}_{\tame}(k(C)/k)\xrightarrow{(\partial_x)_x}\bigoplus_{x\in C_{(0)}}H^{i,i-1}(k(x))\xrightarrow{\sum_x\Cor_{k(x)/k}} H^{i,i-1}(k)
\]
becomes a complex. Indeed, by taking a finite morphism $C\to\bbP^1_k$ and by making use of corestriction maps on tame cohomology which we discuss later (see (\ref{eq:Cor^t})), one can deduce the claim from Remark \ref{rem:H, RC}\,(2). 
\end{rem}

\begin{lem}(Functoriality Property for field extensions)\label{lem:tame coh fld-ext}
Let $k$ be a field of characteristic $p>0$ and $L/K$ an extension of finitely generated fields over $k$. Then the restriction map $H^{i+1,i}(K)\to H^{i+1,i}(L)$ induces a map $H^{i+1,i}_{\tame}(K/k)\to H^{i+1,i}_{\tame}(L/k)$ and the correspondence $K\mapsto H^{i+1,i}_{\tame}(K/k)$ is a covariant functor of the category of finitely generated field extensions over $k$ into the category of abelian groups. 
\end{lem}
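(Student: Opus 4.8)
The plan is to reduce everything to showing that the ordinary restriction map $\Res_{L/K}\colon H^{i+1,i}(K)\to H^{i+1,i}(L)$ carries the subgroup $H^{i+1,i}_{\tame}(K/k)$ into $H^{i+1,i}_{\tame}(L/k)$. Once this is done, the functoriality (compatibility with composition of field extensions and with identities) is inherited verbatim from the functoriality of the restriction maps on $H^{i+1,i}(-)$, so that $K\mapsto H^{i+1,i}_{\tame}(K/k)$ is simply a subfunctor of $K\mapsto H^{i+1,i}(K)$. So I fix $\alpha\in H^{i+1,i}_{\tame}(K/k)$ and a valuation $w\in\DIV(L/k)$, and must verify $\Res_{L/K}(\alpha)\in H^{i+1,i}_{\tame,w}(L)$. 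By \cite[Proposition 1.4]{Mer08}, the restriction $v\Def w|_K$ is either trivial or lies in $\DIV(K/k)$, and I treat these two cases separately.

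If $v=w|_K$ is trivial, then $K\subseteq\sO_w\subseteq\sO_{L_w}=\widehat{\sO}_w$, so the image of $\alpha$ in $H^{i+1,i}(L_w)$ factors through the ring map $K\to\sO_{L_w}$; that is, $\Res_{L_w/L}(\Res_{L/K}(\alpha))$ lies in the image of $H^{i+1,i}(\sO_{L_w})\to H^{i+1,i}(L_w)$. By the local exact sequence (\ref{eq:residue map t}) for the complete field $L_w$ (with injectivity from Corollary \ref{cor:inj, codim one purity}\,(1)), this image is contained in $H^{i+1,i}_{\tame}(L_w)$, so $\Res_{L/K}(\alpha)$ is tame at $w$. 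If instead $v=w|_K\in\DIV(K/k)$, then since $w$ extends $v$ the inclusion $K\hookrightarrow L$ extends to the completions, giving a commutative square relating $\Res_{K_v/K}$, $\Res_{L_w/L}$ and the restriction $\Res_{L_w/K_v}$ along the extension $K_v\hookrightarrow L_w$ of complete discrete valuation fields. Setting $\beta\Def\Res_{K_v/K}(\alpha)$, which lies in $H^{i+1,i}_{\tame}(K_v)$ because $\alpha$ is tame at $v$, commutativity yields $\Res_{L_w/L}(\Res_{L/K}(\alpha))=\Res_{L_w/K_v}(\beta)$. It therefore suffices to show that $\Res_{L_w/K_v}$ sends $H^{i+1,i}_{\tame}(K_v)$ into $H^{i+1,i}_{\tame}(L_w)$.

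This last assertion — that the restriction map for an extension $L_w/K_v$ of complete discrete valuation fields respects the tame subgroups — is the heart of the matter, and the step I expect to be the main obstacle. I would prove it via the description of the tame group as the kernel of restriction to the maximal unramified extension. Concretely, after fixing an algebraic closure of $L_w$, the compositum $K_v^{\ur}\cdot L_w$ is unramified over $L_w$ (base change of an unramified extension is unramified), so $K_v^{\ur}$ embeds into $L_w^{\ur}$ compatibly with $K_v\hookrightarrow L_w$. This produces a commutative square
\[
\begin{CD}
H^{i+1,i}(K_v) @>{\Res}>> H^{i+1,i}(L_w)\\
@VVV @VVV\\
H^{i+1,i}(K_v^{\ur}) @>>> H^{i+1,i}(L_w^{\ur})
\end{CD}
\]
Since $\beta\in H^{i+1,i}_{\tame}(K_v)$ dies in $H^{i+1,i}(K_v^{\ur})$, a diagram chase shows that $\Res_{L_w/K_v}(\beta)$ dies in $H^{i+1,i}(L_w^{\ur})$, i.e.\ it is tame.

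As $w\in\DIV(L/k)$ was arbitrary, both cases together give $\Res_{L/K}(\alpha)\in H^{i+1,i}_{\tame}(L/k)$, proving that restriction respects the tame subgroups; the stated functoriality then follows formally as explained above. The only genuinely nontrivial input is the compatibility of maximal unramified extensions under the residue field extension $k(v)\hookrightarrow k(w)$, which I would justify by the standard fact that base change preserves unramifiedness; everything else is bookkeeping with the commutative squares of restriction maps.
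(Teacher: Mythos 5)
Your proposal is correct and takes essentially the same route as the paper's proof: both reduce to a single $w\in\DIV(L/k)$, use \cite[Proposition 1.4]{Mer08} to split into the cases $w|_K$ trivial or $w|_K\in\DIV(K/k)$, and conclude from the commutative square of completions $K_v\hookrightarrow L_w$ together with the compatible embedding of maximal unramified (in the paper, maximal tamely ramified --- equivalent here since the groups are $p$-torsion, as the paper itself remarks) extensions. The only differences are cosmetic: the paper argues by contradiction and leaves the trivial-restriction case and the compositum compatibility implicit, whereas you spell them out directly.
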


\begin{proof}
Let $\alpha\in H^{i+1,i}_{\tame}(K/k)$ be an element. Suppose that $\alpha_L\not\in H^{i+1,i}_{\tame}(L/k)$. Then by definition, there exists a geometric discrete rank one valuation $w$ on $L$ such that $\alpha_{L_w}\not\in H^{i+1,i}_{\tame}(L_w)$. As $H^{i+1,i}(\sO_w)\subset H^{i+1,i}_{\tame}(L_w)$, the restriction $v\Def w|_K$ must be nontrivial, and hence it gives a geometric discrete rank one valuation on $K$ over $k$~(cf.\ \cite[Proposition 1.4]{Mer08}). Let us consider the commutative diagram of field extensions
\begin{equation*}
\begin{xy}
\xymatrix{
L\ar[r]&L_w\ar[r]&L_w^t\\
K\ar[u]\ar[r]&K_v\ar[u]\ar[r]&K_v^t.\ar[u]
}
\end{xy}
\end{equation*}
Then the condition that $\alpha_{L_w}\not\in H^{i+1,i}_{\tame}(L_w)$ implies that $\alpha_{K_v}\not\in H^{i+1,i}_{\tame}(K_v)$, which is a contradiction. This completes the proof. 
\end{proof}

Next we introduce two types of tame subgroup (see Definitions \ref{def:t, ur coh} and \ref{def:ct, ur coh} below) of the unramified cohomology $H_{\ur}^{i+1,i}(X)$~(cf.\ Definition \ref{def:ur coh X}).

\begin{definition}\label{def:t, ur coh}
Let $k$ be a field of characteristic $p>0$ and $X$ a normal variety over $k$. We define the \textit{na\"ive unramified tame cohomology} $H^{i+1,i}_{\tame,\ur}(X/k)$ to be
\begin{equation*}
H^{i+1,i}_{\tame,\ur}(X/k)\Def H^{i+1,i}_{\ur}(X)\cap H^{i+1,i}_{\tame}(k(X)/k)\subset H^{i+1,i}(k(X)). 
\end{equation*}
\end{definition}

\begin{rem}(Functoriality Property for dominant morphisms)\label{rem:tame, ur dominant map}
Let $f\colon Y\to X$ be a morphism between regular varieties over $k$. Then, by Corollary \ref{cor:ur coh cmp}\,(2), we get a natural restriction map between the unramified cohomology groups $f^*\colon H^{i+1,i}_{\ur}(X)\to H^{i+1,i}_{\ur}(Y)$. If in addition $f$ is dominant, by Lemma \ref{lem:tame coh fld-ext}, we also find that the map $H^{i+1,i}_{\tame}(k(X)/k)\subset H^{i+1,i}(k(X))\xrightarrow{f^*}H^{i+1,i}(k(Y))$ factors through the tame cohomology $H^{i+1,i}_{\tame}(k(Y)/k)$. By putting together these facts, we obtain a natural restriction map between the na\"ive unramified tame cohomology groups 
\[
f^*\colon H^{i+1,i}_{\tame,\ur}(X/k)\to H^{i+1,i}_{\tame,\ur}(Y/k).
\] 
Moreover, the same argument as above implies that for any smooth geometrically connected variety $X$ over $k$ and any finitely generated field extension $K/k$ such that the base change $X_K$ of $X$ along the extension $K/k$ is regular and integral, the projection map $X_K\to X$ induces a map $H^{i+1,i}_{\tame,\ur}(X/k)\to H^{i+1,i}_{\tame,\ur}(X_K/K)$.
\end{rem}

The following is another type of tame subgroup, whose  definition is motivated by the work of Kerz--Schmidt~\cite{KS}. 

\begin{definition}\label{def:ct, ur coh}
Let $k$ be a field of characteristic $p>0$ and $X$ a regular variety over $k$. We define the \textit{unramified curve-tame cohomology}  $H^{i+1,i}_{\ct,\ur}(X/k)$ to be the subgroup of the unramified cohomology group $H^{i+1,i}_{\ur}(X)$ which consists of elements $\alpha\in H^{i+1,i}_{\ur}(X)$ such that for any finitely generated field extension $K/k$ and for any $k$-morphism $C\to X$ from any normal $K$-curve $C$, the restriction $\alpha|_C\in H^{i+1,i}_{\ur}(C)$ belongs to the subgroup $H^{i+1,i}_{\tame,\ur}(C/K)$. 
\end{definition}

\begin{rem}
In the above definition, as $X$ is regular, thanks to Corollary \ref{cor:ur coh cmp}\,(2), the restriction map
\[
H^{i+1,i}_{\ur}(X)\to H^{i+1,i}_{\ur}(C)\,;\,\alpha\mapsto \alpha|_C
\]
of the unramified cohomology groups is well-defined. Hence, Definition \ref{def:ct, ur coh} makes sense. 
\end{rem}

By definition, the unramified curve-tame cohomology has full functoriality as follows.

\begin{prop}(Functoriality Property)\label{prop:t, ur coh cntrv}
Let $k$ be a field of characteristic $p>0$ and $K/k$ a finitely generated field extension. Let $X$ be a regular variety over $k$ and $Y$ a regular variety over $K$. Let $f\colon Y\to X$ be a morphism over $k$, where $Y$ is viewed as a $k$-scheme via the composition $Y\to\Spec K\to\Spec k$. Then the natural restriction map $f^*\colon H^{i+1,i}_{\ur}(X)\to H^{i+1,i}_{\ur}(Y)$ induces the map $H^{i+1,i}_{\ct,\ur}(X/k)\to H^{i+1,i}_{\ct,\ur}(Y/K)$ between the unramified curve-tame cohomology groups. 
\end{prop}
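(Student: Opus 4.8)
The plan is to verify the defining condition of $H^{i+1,i}_{\ct,\ur}(Y/K)$ directly for the class $f^*\alpha$, reducing it to the corresponding condition for $\alpha$ by composing morphisms. First I would check that $f^*\alpha$ already lands in the ambient group $H^{i+1,i}_{\ur}(Y)$: since $f\colon Y\to X$ is a morphism of regular integral schemes over $k$, Corollary \ref{cor:ur coh cmp}\,(2) (in the case $\ell=p$) provides a well-defined restriction map $f^*\colon H^{i+1,i}_{\ur}(X)\to H^{i+1,i}_{\ur}(Y)$, and the same corollary guarantees that such restrictions are functorial in the morphism, i.e.\ $(f\circ g)^*=g^*\circ f^*$ for any further $k$-morphism $g$ of regular schemes.

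Next, to test membership in the curve-tame subgroup, I would unwind Definition \ref{def:ct, ur coh}. Fix an arbitrary finitely generated field extension $L/K$ and an arbitrary $K$-morphism $g\colon C\to Y$ from a normal $L$-curve $C$; the goal is to show $(f^*\alpha)|_C\in H^{i+1,i}_{\tame,\ur}(C/L)$. The key observation is that this entire datum transports to a legitimate test datum for $X/k$: the extension $L/k$ is finitely generated because both $L/K$ and $K/k$ are, the curve $C$ is a normal $L$-curve with $L/k$ finitely generated, and the composite $f\circ g\colon C\to X$ is a $k$-morphism (as $g$ is a $K$-morphism, hence a $k$-morphism, and $f$ is a $k$-morphism). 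Crucially, the tame target $H^{i+1,i}_{\tame,\ur}(C/L)$ depends only on $C$ and its structural base field $L$, so it is literally the same subgroup whether $C$ is regarded as arising in a test for $Y/K$ or for $X/k$.

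Now I would invoke the hypothesis $\alpha\in H^{i+1,i}_{\ct,\ur}(X/k)$, applied to the $k$-morphism $f\circ g\colon C\to X$ together with the extension $L/k$: this yields $(f\circ g)^*\alpha\in H^{i+1,i}_{\tame,\ur}(C/L)$. Finally, by the functoriality of the restriction maps recalled above I would rewrite $(f\circ g)^*\alpha=g^*(f^*\alpha)=(f^*\alpha)|_C$, whence $(f^*\alpha)|_C\in H^{i+1,i}_{\tame,\ur}(C/L)$, exactly as required. Since $L$, $C$, and $g$ were arbitrary, this establishes $f^*\alpha\in H^{i+1,i}_{\ct,\ur}(Y/K)$.

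The statement is essentially a formal consequence of the definition, so I do not expect a deep obstacle; indeed, Definition \ref{def:ct, ur coh} was arranged precisely so that this functoriality becomes transparent. The only point requiring genuine care is the bookkeeping of base fields: I must confirm that the base field $L$ over which the test curve $C$ lives is the same in the two applications of Definition \ref{def:ct, ur coh}, and that a $K$-morphism is automatically a $k$-morphism, so that the composite $f\circ g$ genuinely belongs to the class of test morphisms permitted for $X/k$.
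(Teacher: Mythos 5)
Your proof is correct and is exactly what the paper intends: its own proof is the single line ``This is immediate from Definition \ref{def:ct, ur coh},'' and your argument is precisely the careful unwinding of that definition (transporting the test datum $(L/K,\,g\colon C\to Y)$ to the test datum $(L/k,\,f\circ g\colon C\to X)$ and using $(f\circ g)^*=g^*\circ f^*$ from Corollary \ref{cor:ur coh cmp}\,(2)). Your attention to the base-field bookkeeping, in particular that $H^{i+1,i}_{\tame,\ur}(C/L)$ is the same subgroup in both tests, is the right point to verify and is handled correctly.
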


\begin{proof}
This is immediate from Definition \ref{def:ct, ur coh}.
\end{proof}

For later use, we collect some basic facts on the tame subgroups in the next proposition.

\begin{prop}\label{prop:t, ur coh}
Let $k$ be a field of characteristic $p>0$, $X$ a regular variety over $k$ and $i\ge 0$ an integer. 
\begin{enumerate}
\renewcommand{\labelenumi}{(\arabic{enumi})}

\item If $X=C$ is a normal $k$-curve, then we have $H^{i+1,i}_{\tame,\ur}(C/k)=H^{i+1,i}_{\ct,\ur}(C/k)$.

\item If $X$ is a proper smooth geometrically connected variety over $k$, then we have 
\[
H^{i+1,i}_{\ct,\ur}(X/k)=H^{i+1,i}_{\tame,\ur}(X/k)=H^{i+1,i}_{\ur}(X)=H^{i+1,i}_{\ur}(k(X)/k).
\]

\item If $X=\A^1_k$ is the affine line, then there exist natural isomorphisms
\begin{equation*} 
H^{i+1,i}(k)\xrightarrow{\simeq}H^{i+1,i}_{\ur}(\bbP^1_k)\xrightarrow{\simeq}H^{i+1,i}_{\tame,\ur}(\A^1_k/k).
\end{equation*} 

\item More generally, the projection map ${\rm pr}_X\colon X\times\A^1_k\to X$ induces an isomorphism between the unramified curve-tame cohomology groups 
\begin{equation*}
H^{i+1,i}_{\ct,\ur}(X/k)\xrightarrow{\simeq}H^{i+1,i}_{\ct,\ur}(X\times\A^1_k/k).
\end{equation*} 
\end{enumerate}
\end{prop}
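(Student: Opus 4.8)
The plan is to establish the four assertions in turn, using the earlier ones to power the later ones; throughout I suppress the fixed bidegree and write $H, H_{\ur}, H_{\tame}, H_{\tame,\ur}, H_{\ct,\ur}$ for $H^{i+1,i}$ and its subgroups. For (1) the inclusion $H_{\ct,\ur}(C/k)\subseteq H_{\tame,\ur}(C/k)$ is immediate by applying Definition \ref{def:ct, ur coh} to the identity $C\to C$ with $K=k$. For the reverse inclusion I would take $\alpha\in H_{\tame,\ur}(C/k)$ and verify the curve-tame condition against an arbitrary $k$-morphism $f\colon C'\to C$ from a normal $K$-curve $C'$. First I would record the transcendence-degree bookkeeping that every $v\in\DIV(k(C')/K)$ is automatically geometric on $k(C')/k$ (its residue field is finite over $K$ and $\trdeg_k k(C')=\trdeg_k K+1$), and then split into two cases. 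If $f$ is dominant, then $k(C)\hookrightarrow k(C')$ and the restriction $w\Def v|_{k(C)}$ is trivial or lies in $\DIV(k(C)/k)$ by \cite[Proposition 1.4]{Mer08}; in either case tameness of $\alpha$ at $w$ propagates to tameness of $\alpha|_{k(C')}$ at $v$ through exactly the completion diagram used in the proof of Lemma \ref{lem:tame coh fld-ext}, while unramifiedness of $\alpha|_{C'}$ follows from Corollary \ref{cor:ur coh cmp}(2). If $f$ is constant, it factors through a closed point and $\alpha|_{k(C')}$ is a class coming from a subfield of $k(C')$ algebraic over $k$, hence contained in every $\sO_v$ by normality, so unramified and tame at all $v$.

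For (2) I would first dispose of the last two equalities: Proposition \ref{prop:ur coh cmp} gives $H_{\ur}(X)=H_{\ur}(k(X)/k)$ for proper smooth (regular) $X$, and since $H(\sO_v)\subseteq H_{\tame,v}(k(X))$ by the exact sequence (\ref{eq:residue map t}) one has $H_{\ur}(k(X)/k)\subseteq H_{\tame}(k(X)/k)$, whence $H_{\tame,\ur}(X/k)=H_{\ur}(X)\cap H_{\tame}(k(X)/k)=H_{\ur}(X)$. It then remains to prove $H_{\ur}(X)\subseteq H_{\ct,\ur}(X/k)$. Given $\alpha\in H_{\ur}(X)$ and $f\colon C\to X$ as in Definition \ref{def:ct, ur coh}, unramifiedness of $\alpha|_C$ is again Corollary \ref{cor:ur coh cmp}(2); for tameness at an arbitrary $v\in\DIV(k(C)/K)$ I would invoke the valuative criterion of properness, which extends $\Spec k(C)\to C\to X$ over the discrete valuation ring $\sO_v$, so that $\alpha|_{k(C)}$ lies in $\im\left(H(\sO_v)\to H(k(C))\right)\subseteq H_{\tame,v}(k(C))$. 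Intersecting over all $v$ yields $\alpha|_{k(C)}\in H_{\tame}(k(C)/K)$, completing (2).

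Part (3) I would read off from the mod $p$ Faddeev sequence. Since $H_{\ur}\subseteq H_{\tame}$, both $H_{\ur}(\bbP^1_k)$ and $H_{\tame,\ur}(\A^1_k/k)$ are kernels of the residue collection $(\partial_x)$ on $H_{\tame}(k(t)/k)$, taken over the closed points of $\bbP^1_k$ and of $\A^1_k$ respectively, the translation of "unramified at $x$" into "$\partial_x=0$" being the exactness of (\ref{eq:residue map t}). Theorem \ref{thm:mod p Faddeev} identifies the first kernel with $H(k)$ and Remark \ref{rem:H, RC}(1) identifies the second; as $\A^1_k\subset\bbP^1_k$ imposes fewer residue conditions, the natural inclusion between the two is the asserted isomorphism, both groups being precisely the image of the constant classes $H(k)$.

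Part (4) is where the real work sits, and I expect it to be the main obstacle. The projection $\mathrm{pr}_X$ and the zero section $s\colon X\to X\times\A^1_k$ satisfy $\mathrm{pr}_X\circ s=\mathrm{id}$, so by Proposition \ref{prop:t, ur coh cntrv} the map $\mathrm{pr}_X^*$ is a split monomorphism with retraction $s^*$, and it suffices to prove $\mathrm{pr}_X^*\circ s^*=\mathrm{id}$. Because $X\times\A^1_k$ is regular, its unramified cohomology injects into $H(k(X)(t))$ by Corollary \ref{cor:inj, codim one purity}(1), so I may check this identity at the generic point. The key move is to test a class $\alpha\in H_{\ct,\ur}(X\times\A^1_k/k)$ against the generic horizontal line: taking $K=k(X)$ and the dominant $k$-morphism $\A^1_{k(X)}\to X\times\A^1_k$ obtained by base-changing the generic point $\Spec k(X)\to X$, Definition \ref{def:ct, ur coh} forces $\alpha|_{\A^1_{k(X)}}\in H_{\tame,\ur}(\A^1_{k(X)}/k(X))$. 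Applying part (3) over the base field $k(X)$ shows this restriction is constant, i.e.\ pulled back from $H(k(X))$, and unwinding the identifications shows its value is $(s^*\alpha)|_{k(X)}$ while its image at the generic point of $\A^1_{k(X)}$ is $\alpha$ itself in $H(k(X)(t))$. This is exactly $\mathrm{pr}_X^*(s^*\alpha)=\alpha$ at the generic point, hence everywhere. The delicate points to get right will be the compatibility of the successive restrictions with the generic-point identifications, and the fact that part (3) must be available over the nontrivial base $k(X)$, which is precisely why I would prove (3) in a form valid over an arbitrary base field.
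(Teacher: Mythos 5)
Your proposal is correct and follows essentially the same route as the paper in all four parts: the dominant/constant case split in (1), reduction of (2) to properness (your per-valuation use of the valuative criterion for $\sO_v$ is just the unpackaged form of the paper's extension of $C\to X_K$ to its normal compactification $\overline{C}$), the Faddeev/homotopy sequence for (3), and for (4) the same key inputs — the retraction identity $s^*\circ{\rm pr}_X^*={\rm id}$, restriction to $\A^1_{k(X)}$, part (3) over the base $k(X)$, and injectivity at the generic point — merely read in the opposite order (you verify ${\rm pr}_X^*\circ s^*={\rm id}$ generically, while the paper deduces surjectivity of $s^*$ from its injectivity via the commutative square with $s_\eta^*$). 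Your explicit transcendence-degree check that $\DIV(k(C')/K)\subseteq\DIV(k(C')/k)$ fills in a point the paper leaves implicit.
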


\begin{proof}
(1) The inclusion $H^{i+1,i}_{\ct,\ur}(C/k)\subseteq H^{i+1,i}_{\tame,\ur}(C/k)$ is obvious. 
Let us show the inclusion $H^{i+1,i}_{\tame,\ur}(C/k)\subseteq H^{i+1,i}_{\ct,\ur}(C/k)$. Let $\alpha\in H^{i+1,i}_{\tame,\ur}(C/k)$ be an arbitrary element. Let $K/k$ be a finitely generated field extension and $D$ a normal $K$-curve. Suppose given a $k$-morphism $f\colon D\to C$. We have to show that $f^*\alpha\in H^{i+1,i}_{\ur}(D)$ belongs to the tame subgroup $H^{i+1,i}_{\tame,\ur}(D/K)$. Let $E\subset C$ be the closure of the image $f(D)$ in $C$. Then $E=C$ or $E=\{x\}$ for some closed point $x\in C$. In the former case, the morphism $f\colon D\to C$ is dominant, hence again by Remark \ref{rem:tame, ur dominant map}, one can conclude that $f^*\alpha\in H^{i+1,i}_{\tame,\ur}(D/k)$. Let us suppose that $E=\{x\}$ for some $x\in C_{(0)}$. In this case, it suffices to prove that the map $H^{i+1,i}(k(x))\to H^{i+1,i}_{\ur}(D)$ factors through $H^{i+1,i}_{\tame,\ur}(D/K)$. However, as the extension $k(x)/k$ is finite, for any $v\in\DIV(K(D)/K)$, the restriction $v|_{k(x)}$ is the trivial valuation. Hence, the image of the map $H^{i+1,i}(k(x))\to H^{i+1,i}_{\ur}(D)$ is contained in $H^{i+1,i}_{\ur}(K(D)/K)\subset H^{i+1,i}_{\tame, \ur}(D/K)$. This completes the proof.

(2) The last equality is due to Proposition \ref{prop:ur coh cmp}. The equality $H^{i+1,i}_{\tame,\ur}(X/k)=H^{i+1,i}_{\ur}(X)$ follows from the fact that $H^{i+1,i}_{\ur}(X)=H^{i+1,i}_{\ur}(k(X)/k)\subseteq H^{i+1,i}_{\tame}(k(X)/k)$. Let us prove the equality $H^{i+1,i}_{\ct,\ur}(X/k)=H^{i+1,i}_{\ur}(X)$. Let $\alpha\in H^{i+1,i}_{\ur}(X)$ be an arbitrary element. Let $K/k$ be a finitely generated field extension and $C$ a normal $K$-curve. Suppose given a $k$-morphism $f\colon C\to X$. We have to show that $f^*\alpha\in H^{i+1,i}_{\ur}(C)$ belongs to $H^{i+1,i}_{\tame,\ur}(C/K)$. However, as the morphism $f\colon C\to X$ factors through the base change $X_K=X\times_k K$. As $X_K$ is proper over $K$, the $K$-morphism $C\to X_K $ can be extended to a morphism $\overline{C}\to X$ from the normal compactification $\overline{C}$ of $C$. Therefore, $f^*(\alpha)$ belongs to $H^{i+1,i}_{\ur}(\overline{C}/K)\subset H^{i+1,i}_{\tame,\ur}(C/K)$. This completes the proof. 

(3) This follows from Remark \ref{rem:H, RC}\,(1). 

(4) Let $s\colon X\to X\times\A^1_k$ be the map $x\mapsto (x,0)$ and $s_{\eta}\colon \Spec k(X)\to\A^1_{k(X)}$ the natural base change of $s$ along the generic point $\eta\colon \Spec k(X)\to X$. By Proposition \ref{prop:t, ur coh cntrv}, we have a commutative diagram
\begin{equation*}
\begin{xy}
\xymatrix{
H^{i+1,i}(k(X))& H^{i+1,i}_{\ct,\ur}(\A^1_{k(X)}/k(X))\ar[l]_{s_{\eta}^*~~~}^{\simeq~~~}\\
H^{i+1,i}_{\ct,\ur}(X/k)\ar@{^{(}->}[u]& H^{i+1,i}_{\ct,\ur}(X\times\A^1_k/k)\ar@{^{(}->}[u]\ar[l]^{s^*~~},
}
\end{xy}
\end{equation*}
where the two vertical arrows are injective by definition of the unramified tame cohomology. Moreover, the top horizontal arrow is an isomorphism, which is due to (3). This implies that $s^*$ is injective. However, as ${\rm pr}_X\circ s={\rm id}_X$, we have $s^*\circ {\rm pr}_X^*={\rm id}$ on $H^{i+1,i}_{\ct,\ur}(X/k)$, which implies that $s^*$ is surjective. Therefore, $s^*$ is bijective and its inverse map is given by ${\rm pr}_X^*\colon H^{i+1,i}_{\ct,\ur}(X/k)\to H^{i+1,i}_{\ct,\ur}(X\times\A^1_k/k)$. This completes the proof. 
\end{proof}

Finally, we discuss corestriction maps on the tame cohomology groups. Let us begin with the local situation.

\begin{lem}\label{lem:def of Cor^t cdvf}
Let $L/K$ be a finite extension of complete discrete valuation fields of characteristic $p>0$. Then the corestriction map $\Cor_{L/K}\colon H^{i+1,i}(L)\to H^{i+1,i}(K)$ induces a map $H^{i+1,i}_{\tame}(L)\to H^{i+1,i}_{\tame}(K)$ between the tame cohomology groups, which we denote by $\Cor^{\rmt}_{L/K}$.  
\end{lem}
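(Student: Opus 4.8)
The plan is to reduce the assertion to the defining description of the tame subgroup recalled above, namely $H^{i+1,i}_{\tame}(K)=\ker\bigl(\Res_{K^{\ur}/K}\colon H^{i+1,i}(K)\to H^{i+1,i}(K^{\ur})\bigr)$ (valid since $H^{i+1,i}(K)$ is $p$-torsion), and then to invoke the base-change formula (C4) for Kato's corestriction. Fix $\alpha\in H^{i+1,i}_{\tame}(L)$; since $\Cor_{L/K}(\alpha)$ automatically lies in $H^{i+1,i}(K)$, it suffices to show that $\Res_{K^{\ur}/K}\bigl(\Cor_{L/K}(\alpha)\bigr)=0$. As $K^{\ur}/K$ is separable, Remark \ref{rem:div=>excellent} lets me write $L\otimes_K K^{\ur}\simeq\prod_{i}L_i'$ as a product of fields, finite because $[L:K]<\infty$, each $L_i'$ being a complete discrete valuation field finite over $K^{\ur}$. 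Property (C4) then gives
\begin{equation*}
\Res_{K^{\ur}/K}\bigl(\Cor_{L/K}(\alpha)\bigr)=\sum_i\Cor_{L_i'/K^{\ur}}\bigl(\Res_{L_i'/L}(\alpha)\bigr),
\end{equation*}
so everything comes down to understanding the factors $L_i'$ as extensions of $L$.

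The decisive point, which I expect to be the only genuine obstacle, is to identify each $L_i'$ as the maximal unramified extension of $L$. Since $K^{\ur}/K$ is unramified, base changing it along $L/K$ produces unramified extensions of $L$: each $L_i'/L$ has ramification index one, and by the structure of étale algebras over complete discrete valuation rings its residue field is the compositum $\overline{L}\cdot\overline{K}^{\mathrm{sep}}$. A short computation—valid even when $\overline{L}/\overline{K}$ is inseparable, since a purely inseparable extension of a separably closed field is again separably closed—shows $\overline{L}\cdot\overline{K}^{\mathrm{sep}}=\overline{L}^{\mathrm{sep}}$. Hence each $L_i'$ is (a copy of) the maximal unramified extension $L^{\ur}$ of $L$; in particular $(L_i')^{\ur}=L_i'$, so $H^{i+1,i}_{\tame}(L_i')=0$, and $\Res_{L_i'/L}(\alpha)=\Res_{L^{\ur}/L}(\alpha)=0$ because $\alpha$ is tame. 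Every summand above therefore vanishes, giving $\Res_{K^{\ur}/K}\bigl(\Cor_{L/K}(\alpha)\bigr)=0$, i.e.\ $\Cor_{L/K}(\alpha)\in H^{i+1,i}_{\tame}(K)$, and $\Cor^{\rmt}_{L/K}$ is defined to be the induced homomorphism.

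The heart of the matter is thus the control of the residue fields in positive characteristic: one must be certain that the possible inseparability of $\overline{L}/\overline{K}$ does not spoil the identification $\overline{L_i'}=\overline{L}^{\mathrm{sep}}$, equivalently that restricting the tame class $\alpha$ to each $L_i'$ lands in the group $H^{i+1,i}_{\tame}(L_i')=0$. Once this residue-field step is secured, the formal input—property (C4) together with the $K^{\ur}$-characterization of tameness—does all the remaining work, and no explicit manipulation of the generators $[f,g_1,\dots,g_i\}$ or of the residue maps $\partial_v$ is required.
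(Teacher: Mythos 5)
Your argument is correct, and it runs on the same two engines as the paper's proof --- the base-change formula (C4) and the characterization of tameness as the kernel of restriction to $K^{\ur}$ --- but it organizes them differently. The paper first splits $L/K$ through its maximal unramified subextension and uses transitivity (C1) to reduce to two cases: $L/K$ unramified, where $L\otimes_K K^{\ur}\simeq\prod_{i=1}^{[L:K]}K^{\ur}$ and one gets $\Res_{K^{\ur}/K}\circ\Cor_{L/K}=[L:K]\,\Res_{L^{\ur}/L}$, and $L/K$ with purely inseparable residue extension, where $L\otimes_K K^{\ur}\simeq L^{\ur}$ is a single field. In each case the tensor-product computation is immediate, and no residue-field lemma is needed. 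You instead apply (C4) once, uniformly, and prove that \emph{every} factor $L_i'$ of $L\otimes_K K^{\ur}$ is a copy of $L^{\ur}$; the content you have to supply is exactly the identification $\overline{L}\cdot\overline{K}^{\mathrm{sep}}=\overline{L}^{\mathrm{sep}}$ of residue fields, which you correctly verify is insensitive to inseparability of $\overline{L}/\overline{K}$ (a purely inseparable extension of a separably closed field is separably closed, and $\overline{L}\cdot\overline{K}^{\mathrm{sep}}/\overline{L}$ is separable). Your route is shorter and avoids the d\'evissage and (C1) altogether; the paper's route keeps the field-theoretic bookkeeping trivial at the cost of the case split. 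Two small remarks: your phrase ``$H^{i+1,i}_{\tame}(L_i')=0$'' is a mild abuse, since $L_i'\simeq L^{\ur}$ is henselian but not complete and the tame group was only defined for complete discrete valuation fields --- but you do not actually need it, as your argument only uses $\Res_{L^{\ur}/L}(\alpha)=0$, which is precisely the ($p$-torsion) characterization of tameness the paper records; and your application of (C4) with the infinite extension $K'=K^{\ur}$ is legitimate (cohomology commutes with the filtered colimit over finite unramified levels) and is in fact exactly how the paper uses (C4) as well.
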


\begin{proof}
First let us consider the case where $L/K$ is unramified. In this case we have $L^{ur}=K^{ur}$ and $L\otimes_K K^{ur}=\prod_{i=1}^{[L:K]}L^{ur}$. Thus by the equation (C\ref{eq:Res vs Cor}), we have $\Res_{K^{ur}/K}\circ\Cor_{L/K}=[L:K]\Res_{L^{ur}/L}$, which immediately implies that the composition $H^{i+1,i}_{\tame}(L)\subset H^{i+1,i}(L)\xrightarrow{\Cor_{L/K}} H^{i+1,i}(L)$ factors through the tame cohomology group $H^{i+1,i}_{\tame}(K)$. Therefore, the lemma is true if $L/K$ is an unramified extension. Then thanks to the transitivity of the corestriction maps~(C\ref{eq:Cor trans}), it remains to prove the assertion for a finite extension $L/K$ of complete discrete valuation fields whose residue extension is purely inseparable. In this case we have $\Gal(L^{ur}/L)\xrightarrow{\simeq}\Gal(K^{ur}/K)$ and $L\otimes_K K^{ur}\simeq LK^{ur}$, hence $L\otimes_K K^{ur}\simeq L^{ur}$. Therefore, again by (C\ref{eq:Res vs Cor}), we can conclude that the map $\Cor_{L/K}\colon H^{i+1,i}(L)\to H^{i+1,i}(K)$ induces a map $H^{i+1,i}_{\tame}(L)\to H^{i+1,i}_{\tame}(K)$. This completes the proof.      
\end{proof}

For any $v\in\DIV(K/k)$, we have $L\otimes_K K_v\simeq\prod_{w\mid v}L_w$~(cf.\ Remark \ref{rem:div=>excellent}). Therefore, the equation (C\ref{eq:Res vs Cor}) together with Lemma \ref{lem:def of Cor^t cdvf}  implies that the diagram
\begin{equation*}
\begin{xy}
\xymatrix{
H^{i+1,i}(L)\ar[r]\ar[d]_{\Cor_{L/K}}&\bigoplus_{w\mid v}H^{i+1,i}(L_w)\ar[d]^{\sum_{w\mid v}\Cor_{L_w/K_v}}&\bigoplus_{w\mid v}H^{i+1,i}_{\tame}(L_w)\ar[d]^{\sum_{w\mid v}\Cor^{\rmt}_{L_w/K_v}}\ar@{_{(}->}[l]\\
H^{i+1,i}(K)\ar[r]&H^{i+1,i}(K_v)&H^{i+1,i}_{\tame}(K_v)\ar@{_{(}->}[l].
}
\end{xy}
\end{equation*}
is commutative. As a consequence, we obtain the corestriction map of the tame cohomology groups,
\begin{equation}\label{eq:Cor^t}
\Cor^{\rmt}_{L/K}\colon H^{i+1,i}_{\tame}(L/k)\to H^{i+1,i}_{\tame}(K/k). 
\end{equation}

Let us prove the following.

\begin{prop}\label{prop:Cor on t, ur coh}
Let $k$ be a field of characteristic $p>0$ and $f\colon Y\to X$ a finite surjective morphism of normal varieties over $k$. Then the corestriction map $\Cor^{\rmt}_{k(Y)/k(X)}\colon H^{i+1,i}_{\tame}(k(Y)/k)\to H^{i+1,i}_{\tame}(k(X)/k)$ of the tame cohomology groups induces the map
\begin{equation*}
\Cor_{Y/X}^{\rmt}\colon H^{i+1,i}_{\tame,\ur}(Y/k)\to H^{i+1,i}_{\tame,\ur}(X/k)
\end{equation*}
between the na\"ive unramified tame cohomology groups. \end{prop}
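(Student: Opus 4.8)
The plan is to reduce everything to a residue computation at each divisorial valuation of $X$. By construction of the corestriction map in \eqref{eq:Cor^t}, for any $\alpha\in H^{i+1,i}_{\tame,\ur}(Y/k)\subseteq H^{i+1,i}_{\tame}(k(Y)/k)$ the class $\Cor^{\rmt}_{k(Y)/k(X)}(\alpha)$ automatically lies in $H^{i+1,i}_{\tame}(k(X)/k)$. Since $H^{i+1,i}_{\tame,\ur}(X/k)=H^{i+1,i}_{\ur}(X)\cap H^{i+1,i}_{\tame}(k(X)/k)$ (Definition \ref{def:t, ur coh}), the only point to check is that $\Cor^{\rmt}_{k(Y)/k(X)}(\alpha)$ is unramified on $X$, that is, that it lies in $\im\bigl(H^{i+1,i}(\sO_{X,x})\to H^{i+1,i}(k(X))\bigr)$ for every $x\in X^{(1)}$.

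First I would fix $x\in X^{(1)}$ and write $K=k(X)$, $L=k(Y)$, and $v=v_x\in\DIV(K/k)$ for the valuation with $\sO_v=\sO_{X,x}$. Because $f$ is finite surjective and $Y$ is normal, the points $y\in Y^{(1)}$ over $x$ are precisely the codimension one points with $\sO_{Y,y}=\sO_w$, where $w$ runs over the finitely many extensions of $v$ to $L$, and each such $w$ belongs to $\DIV(L/k)$. As $\alpha\in H^{i+1,i}_{\ur}(Y)$, the exact sequence \eqref{eq:residue map t} shows $\alpha\in H^{i+1,i}(\sO_w)=\ker\partial_w$, so $\partial_w(\alpha)=0$ for all $w\mid v$. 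By \eqref{eq:residue map t} applied at $v$, it then suffices to prove $\partial_v\bigl(\Cor^{\rmt}_{L/K}(\alpha)\bigr)=0$.

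Next I would pass to completions. By Remark \ref{rem:div=>excellent}, $L\otimes_K K_v\simeq\prod_{w\mid v}L_w$, and the commutative diagram used to construct \eqref{eq:Cor^t} gives
\begin{equation*}
\Cor^{\rmt}_{L/K}(\alpha)\big|_{K_v}=\sum_{w\mid v}\Cor^{\rmt}_{L_w/K_v}\bigl(\alpha\big|_{L_w}\bigr).
\end{equation*}
Because $\partial_v$ factors through restriction to $K_v$, the problem reduces to the local residue--corestriction compatibility asserting that, for a finite extension $L_w/K_v$ of complete discrete valuation fields of characteristic $p>0$, the square
\begin{equation*}
\begin{xy}
\xymatrix{
H^{i+1,i}_{\tame}(L_w)\ar[r]^{\partial_w}\ar[d]_{\Cor^{\rmt}_{L_w/K_v}}&H^{i,i-1}(k(w))\ar[d]^{\Cor_{k(w)/k(v)}}\\
H^{i+1,i}_{\tame}(K_v)\ar[r]^{\partial_v}&H^{i,i-1}(k(v))
}
\end{xy}
\end{equation*}
commutes. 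Granting it, $\partial_v\bigl(\Cor^{\rmt}_{L/K}(\alpha)\bigr)=\sum_{w\mid v}\Cor_{k(w)/k(v)}\bigl(\partial_w(\alpha|_{L_w})\bigr)=0$, completing the proof.

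The main obstacle is this local compatibility, which is where I expect the real work. To prove it I would use the explicit description \eqref{eq:partial_v vs partial_v^M}, expressing $\partial$ through the tame symbol $\partial^{\rm M}$ of Milnor $K$-theory and the module structure $[-,-\}$, together with the projection formulas (C\ref{eq:proj formula 1}) and (C\ref{eq:proj formula 2}); the commutativity then reduces to the classical compatibility of the Milnor norm $N_{k(w)/k(v)}$ with the tame symbol (cf.\ \cite{GS06}). As in Lemma \ref{lem:def of Cor^t cdvf}, I would handle a general extension by the d\'evissage into the unramified case (where a uniformizer of $K_v$ is also one of $L_w$ and $k(w)/k(v)$ is separable) and the case of purely inseparable residue extension (where $L_w\otimes_{K_v}K_v^{ur}\simeq L_w^{ur}$, so that the property (C\ref{eq:Res vs Cor}) and the behaviour of $N_{k(w)/k(v)}$ can be exploited). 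The purely inseparable, wildly ramified case is the delicate one; fortunately, for the statement at hand only the \emph{vanishing} of $\partial_v\circ\Cor^{\rmt}_{L_w/K_v}$ on unramified classes is needed, which is robust against the precise ramification constants that might otherwise enter the formula.
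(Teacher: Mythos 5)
Your proposal is correct and follows essentially the same route as the paper: the reduction to vanishing of residues at each $v=v_x$ via completions and the defining diagram for $\Cor^{\rmt}$ is exactly the content of the paper's Lemma \ref{lem:Cor on t, ur coh}, and the local residue--corestriction square you isolate is precisely the paper's Lemma \ref{lem:Cor^t cdvf}, proved there by the same d\'evissage (unramified case plus purely inseparable residue extension) using the projection formulas (C\ref{eq:proj formula 1}), (C\ref{eq:proj formula 2}), the description (\ref{eq:partial_v vs partial_v^M}) and the norm--tame symbol compatibility of \cite[Proposition 7.4.1]{GS06}. Your closing observation that only the vanishing of $\partial_v\circ\Cor^{\rmt}_{L_w/K_v}$ on unramified classes is needed is a mild simplification the paper does not exploit, since it proves the full commutativity anyway.
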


For the proof, we need two lemmas.

\begin{lem}\label{lem:Cor^t cdvf}
Let $K$ be a complete discrete valuation field with $v$ its valuation. Let $L/K$ be a finite extension and $w$ the unique extension of $v$ to $L$. Then  for any $i\ge 0$, we have a commutative diagram
\begin{equation*}
\begin{xy}
\xymatrix{
H^{i+1,i}_{\tame}(L)\ar[r]^{\partial_w~~}\ar[d]^{\Cor_{L/K}^{\rmt}}&H^{i,i-1}(k(w))\ar[d]^{\Cor_{k(w)/k(v)}}\\
H^{i+1,i}_{\tame}(K)\ar[r]^{\partial_v~~}& H^{i,i-1}(k(v)).
}
\end{xy}
\end{equation*} 
\end{lem}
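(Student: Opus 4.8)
The plan is to establish the commutativity by the same dévissage as in Lemma \ref{lem:def of Cor^t cdvf}, reducing to two cases and, in each, testing the identity on the module generators of $H^{i+1,i}_{\tame}(L)$ recorded after (\ref{eq:residue map t}). Concretely, since $K$ is complete, $w$ is the unique extension of $v$ and $L$ is again complete; letting $E$ be the separable closure of $k(v)$ inside $k(w)$, there is a unique intermediate complete discretely valued field $K\subset K'\subset L$ with $K'/K$ unramified of residue field $E$ and $L/K'$ of purely inseparable residue extension. By the transitivity (C\ref{eq:Cor trans}) of corestriction, both $\Cor^{\rmt}_{L/K}=\Cor^{\rmt}_{K'/K}\circ\Cor^{\rmt}_{L/K'}$ and $\Cor_{k(w)/k(v)}=\Cor_{E/k(v)}\circ\Cor_{k(w)/E}$, so the square attached to $L/K$ is the vertical composite of the squares attached to $K'/K$ and $L/K'$; it therefore suffices to treat separately the cases (a) $L/K$ unramified and (b) $L/K$ with purely inseparable residue extension.

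In either case I would test the identity on the two families of generators of $H^{i+1,i}_{\tame}(L)$: the symbols $[f,u_1,\dots,u_i\}$ with $f\in\sO_w$ and $u_1,\dots,u_i\in\sO_w^{\times}$, which lie in $H^{i+1,i}(\sO_w)=\Ker\partial_w$; and the symbols $[f,\pi,u_2,\dots,u_i\}$ with $f\in\sO_w$, $\pi$ a uniformizer of $L$, and $u_2,\dots,u_i\in\sO_w^{\times}$, for which (\ref{eq:partial_v vs partial_v^M}) gives $\partial_w([f,\pi,u_2,\dots,u_i\})=[\overline{f},\overline{u}_2,\dots,\overline{u}_i\}$ (the reduction to these two families being legitimate by the additivity of the symbol in each $g_j$-slot). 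On the first family the claim becomes that $\Cor^{\rmt}_{L/K}$ carries $H^{i+1,i}(\sO_w)$ into $H^{i+1,i}(\sO_v)=\Ker\partial_v$, i.e.\ that corestriction preserves the unramified subgroup; on the second it becomes the computation of $\partial_v\Cor^{\rmt}_{L/K}([f,\pi,u_2,\dots,u_i\})$ and its identification with $\Cor_{k(w)/k(v)}([\overline{f},\overline{u}_2,\dots,\overline{u}_i\})$.

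For case (a) I would use that a uniformizer $\pi$ of $K$ is simultaneously a uniformizer of $L$: writing the second family as $\pm[\gamma,\pi\}$ with $\gamma=[f,u_2,\dots,u_i\}\in H^{i,i-1}(\sO_w)$ and $\pi\in K$, the projection formula (C\ref{eq:proj formula 1}) gives $\Cor_{L/K}([\gamma,\pi\})=[\Cor_{L/K}(\gamma),\pi\}$, while Lemma \ref{lem:s_v dvr} turns both $\partial_w$ and $\partial_v$ into reduction maps up to the sign $(-1)^{i}$. Thus case (a) reduces to the two statements that corestriction preserves the unramified subgroups and commutes with the reduction maps $H^{\bullet}(\sO)\to H^{\bullet}(k(\cdot))$; both are read off from base change to the maximal unramified extension, where $L\otimes_K K^{ur}\simeq\prod_{j=1}^{[L:K]}L^{ur}$ forces $\Res_{K^{ur}/K}\circ\Cor_{L/K}=[L:K]\,\Res_{L^{ur}/L}$ via (C\ref{eq:Res vs Cor}), exactly as in Lemma \ref{lem:def of Cor^t cdvf}. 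For case (b) the residue extension $k(w)/k(v)$ is purely inseparable, so $\Cor_{k(w)/k(v)}$ is pinned down accordingly, and I would compute directly from (\ref{eq:partial_v vs partial_v^M}), pushing $\Cor^{\rmt}_{L/K}$ through the $K^{\rm M}_{*}/p$-module structure by the projection formulas (C\ref{eq:proj formula 1}) and (C\ref{eq:proj formula 2}) and invoking the classical compatibility of the Milnor tame symbol $\partial^{\rm M}$ with the norm $N_{k(w)/k(v)}$; the signs from Lemma \ref{lem:s_v dvr} cancel on the two sides.

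The step I expect to be the main obstacle is that the corestriction of a symbol is not itself a symbol, so one cannot apply $\partial_v$ slot by slot to $\Cor^{\rmt}_{L/K}(\alpha)$. The entire role of the dévissage, of the projection formulas (C\ref{eq:proj formula 1})--(C\ref{eq:proj formula 2}), and of the base-change identity (C\ref{eq:Res vs Cor}) is to funnel every computation either down to the residue field, where the corestriction is by construction $\Cor_{k(w)/k(v)}$, or to the compatibility of the norm with the tame symbol in Milnor K-theory, which is available.
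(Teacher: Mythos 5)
Your overall skeleton agrees with the paper's proof: the same transitivity d\'evissage via (C\ref{eq:Cor trans}) into (a) the unramified case and (b) the case of purely inseparable residue extension, and your case (b) is essentially the paper's first step (though note that to apply (C\ref{eq:proj formula 2}) you must first replace $f$ by a class defined over $K$; the paper justifies this by the isomorphism $H^{1,0}(\sO_v)\xrightarrow{\simeq}H^{1,0}(\sO_w)$, valid because the rings are complete and $k(w)/k(v)$ is purely inseparable --- make this explicit). The genuine gap is in your case (a). After writing the generator as $\pm[\gamma,\pi\}$ with $\gamma=[f,u_2,\dots,u_i\}\in H^{i,i-1}(\sO_w)$, $u_j\in\sO_w^{\times}$, you need two facts: that $\Cor_{L/K}(\gamma)$ lies in $H^{i,i-1}(\sO_v)$, and that $\overline{\Cor_{L/K}(\gamma)}=\Cor_{k(w)/k(v)}(\overline{\gamma})$. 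You claim both are ``read off from base change to the maximal unramified extension, exactly as in Lemma \ref{lem:def of Cor^t cdvf}.'' That mechanism cannot work here: every tame class restricts to zero over $K^{\ur}$ (for $p$-torsion classes this is the definition of tameness), so the identity (C\ref{eq:Res vs Cor}) over $K^{\ur}$ yields only vanishing statements --- it shows tame maps to tame, as in Lemma \ref{lem:def of Cor^t cdvf}, but carries no information whatsoever about residues or reductions, since both sides of the identity you need die after the base change. Worse, by Lemma \ref{lem:s_v dvr} the statement that corestriction commutes with the reduction maps $H^{i,i-1}(\sO_w)\to H^{i,i-1}(k(w))$ is (granting unramifiedness of $\Cor_{L/K}(\gamma)$) \emph{equivalent} to the commutative square you are proving, evaluated at the class $[\gamma,\pi\}\in H^{i+1,i}_{\tame}(L)$: pairing with $\pi$ raises the degree back to $(i+1,i)$, so this is not an induction on $i$ but a circle.

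The paper breaks this circle differently. In the unramified case $\sO_v\to\sO_w$ is \'etale, hence $\Omega^i_{\sO_w}\simeq\Omega^i_{\sO_v}\otimes_{\sO_v}\sO_w$, and one may take generators $[f,\omega_L\}$ in which the \emph{entire} logarithmic form $\omega=\frac{dg_1}{g_1}\wedge\cdots\wedge\frac{dg_i}{g_i}$ --- all slots, not merely the uniformizer --- is defined over $K$, with only the coefficient $f\in\sO_w$ living upstairs. Then (C\ref{eq:proj formula 1}) collapses the whole computation to the single degree-$(1,0)$ compatibility $\overline{\Cor^{\rmt}_{L/K}(f)}=\Cor_{k(w)/k(v)}(\overline{f})$, which is verified concretely: since $L/K$ and $k(w)/k(v)$ are separable, both corestrictions are induced by trace maps (\cite[Remark 5.1(3)]{Shiho07}), and \'etaleness of $\sO_v\to\sO_w$ makes the traces compatible with reduction. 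To repair your argument you would need to replace the base-change claim by this descent of the unit slots to $K$ (or supply some other independent proof that corestriction commutes with reduction for classes with arbitrary unit entries from $L$); as written, the key identity in case (a) is unproven.
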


\begin{proof}
We first prove the commutativity of the diagram  under the assumption that $k(w)/k(v)$ is purely inseparable. Let $[f,g_1,\dots,g_i\}\in H^{i+1,i}_{\tame}(L)$ be an arbitrary generator. As $k(w)/k(v)$ is purely inseparable and both the valuation rings $\sO_v$ and $\sO_w$ are complete, the restriction map $H^{1,0}(\sO_v)\to H^{1,0}(\sO_w)$ is an isomorphism. Therefore, we may assume that $f=f'_L$ for some element $f'\in \sO_v$. In this case, by  (C\ref{eq:proj formula 2}) and the equation (\ref{eq:partial_v vs partial_v^M}) together with the compatibility between the norm $N$ and the residue $\partial^{\rm M}$ for the Milnor K-groups~(cf.\ \cite[Proposition 7.4.1]{GS06}), we have
\begin{equation*}
\begin{aligned}
\partial_{v}(\Cor^{\rmt}_{L/K}([f,g_1,\dots,g_i\}))
&=\partial_v([f',N_{L/K}(\{g_1,\dots,g_i\}))\\
&=[\overline{f}',\partial_v^{\rm M}(N_{L/K}(\{g_1,\dots,g_i\}))\\
&=[\overline{f}',N_{k(w)/k(v)}(\partial_{w}^{\rm M}\{g_1,\dots,g_i\}))\}\\
&=\Cor_{k(w)/k(v)}([\overline{f},\partial_w^{\rm M}(\{g_1,\dots,g_i\})\})\\
&=\Cor_{k(w)/k(v)}(\partial_w([f,g_1,\dots,g_i\})).
\end{aligned}
\end{equation*}
This proves the lemma in the case where $k(w)/k(v)$ is a purely inseparable extension.  

By the transitivity of the corestriction map~ (C\ref{eq:Cor trans}), it remains to prove the commutativity of the diagram in the case where $w/v$ is unramified, or equivalently the extension of the valuation rings $\sO_v\to\sO_w$ is \'etale. In this case, we have $\Omega^i_{\sO_w}\simeq\Omega^i_{\sO_v}\otimes_{\sO_v}\sO_w$. Hence, the group $H^{i+1,i}_{\tame}(L)$ is generated by elements of the form $[f,\omega_L\}$ where $f\in\sO_w$ and $\omega=\dfrac{dg_1}{g_1}\wedge\cdots\wedge\dfrac{dg_i}{g_i}\in \Omega^{i}_{K,\log}$. Therefore, by (C\ref{eq:proj formula 1}), we have 
\begin{equation*}
\Cor_{L/K}^{\rmt}([f,\omega_L\})=[\Cor^{\rmt}_{L/K}(f),\omega\}. 
\end{equation*}
Note that $\Cor^{\rmt}_{L/K}(f)\in H^{1,0}(K)=H^{1,0}(\sO_v)$. If $v(g_j)=0$ for $1\le j\le i$, then we have
\begin{equation*}
\partial_v(\Cor_{L/K}^{\rmt}([f,\omega_L\}))=0=\Cor_{k(w)/k(v)}(\partial_w([f,\omega_L\})).
\end{equation*}
On the other hand, if $v(g_1)=w((g_1)_L)=1$ and $v(g_j)=0$ for $j\ge 2$, we have
\begin{equation*}
\begin{aligned}
\partial_v(\Cor_{L/K}^{\rmt}([f,\omega_L\}))&=[\overline{\Cor^{\rmt}_{L/K}(f)},\overline{g}_2,\dots,\overline{g}_i\},\\
\Cor_{k(w)/k(v)}(\partial_w([f,\omega_L\}))&=\Cor_{k(w)/k(v)}([\overline{f},(\overline{g}_2)_{k(w)},\dots,(\overline{g}_i)_{k(w)}\})\\
&=[\Cor_{k(w)/k(v)}(\overline{f}),\overline{g}_2,\dots,\overline{g}_i\}.
\end{aligned}
\end{equation*}
Therefore, it suffices to show that $\overline{\Cor^{\rmt}_{L/K}(f)}=\Cor_{k(w)/k(v)}(\overline{f})$ in $H^{1,0}(k(v))$, or equivalently to show that the diagram
\begin{equation}\label{eq:lem:Cor^t cdvf}
\vcenter{
\xymatrix{
H^{1,0}(L)\ar[d]_{\Cor_{L/K}}&H^{1,0}(\sO_w)\ar@{_{(}->}[l]\ar[r]^{\simeq}&H^{1,0}(k(w))\ar[d]^{\Cor_{k(w)/k(v)}}\\
H^{1,0}(K)&H^{1,0}(\sO_v)\ar@{_{(}->}[l]\ar[r]^{\simeq}&H^{1,0}(k(v))
}
}
\end{equation}
is commutative. However, as both the extensions $L/K$ and $k(w)/k(v)$ are separable, by \cite[Remark 5.1(3)]{Shiho07}, the corestriction maps $\Cor_{L/K}$ and $\Cor_{k(w)/k(v)}$ are induced by the trace maps ${\rm Tr}_{L/K}\colon L\to K$ and ${\rm Tr}_{k(w)/k(v)}\colon k(w)\to k(v)$ respectively. Now the commutativity of the diagram (\ref{eq:lem:Cor^t cdvf}) follows from the \'etaleness of the extension $\sO_v\to\sO_w$. This completes the proof of the lemma. 
\end{proof}

\begin{lem}\label{lem:Cor on t, ur coh}
Let $L/K$ be a finite extension of finitely generated fields over $k$. Let $v\in \DIV(K/k)$.  Then the diagram
\begin{equation*}
\begin{xy}
\xymatrix{
H^{i+1,i}_{\tame}(L/k)\ar[r]^{(\partial_w)\quad\quad}\ar[d]_{\Cor_{L/K}^{\rmt}}& \bigoplus_{w\mid v}H^{i,i-1}(k(w))\ar[d]^{\sum_{w\mid v}\Cor_{k(w)/k(v)}}\\
H^{i+1,i}_{\tame}(K/k)\ar[r]^{\partial_v}& H^{i,i-1}(k(v))
}
\end{xy}
\end{equation*} 
is commutative, where $w$ is taken over all the valuations on $L$ lying above $v$. 
\end{lem}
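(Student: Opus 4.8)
The plan is to \emph{reduce the global statement to the local one} in Lemma \ref{lem:Cor^t cdvf} by passing to completions at $v$ and at the finitely many $w\mid v$. The starting observation is that the global residue map $\partial_v$ on $H^{i+1,i}_{\tame}(K/k)$ factors through the restriction $\Res_{K_v/K}\colon H^{i+1,i}_{\tame}(K/k)\to H^{i+1,i}_{\tame}(K_v)$ followed by the local residue map $\partial_v$ attached to the complete field $K_v$ (this is how $\partial_v$ is built in the exact sequence (\ref{eq:residue map t}), and is consistent with the generator formula (\ref{eq:partial_v vs partial_v^M}) since the residue field $k(v)$ and the reductions $\overline f,\overline{g}_i$ are unchanged under completion); the analogous factorization holds for each $\partial_w$ through $L_w$. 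Since $v$ is geometric and $L/K$ is finite, each extension $w\mid v$ is again a geometric discrete rank one valuation on $L/k$, so that a class $\alpha\in H^{i+1,i}_{\tame}(L/k)$ lies in $H^{i+1,i}_{\tame,w}(L)$, restricts into $H^{i+1,i}_{\tame}(L_w)$, and the local residues $\partial_w(\Res_{L_w/L}\alpha)$ compute the global residues $\partial_w(\alpha)$.

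First I would use the decomposition $L\otimes_K K_v\simeq\prod_{w\mid v}L_w$ (Remark \ref{rem:div=>excellent}) together with property (C\ref{eq:Res vs Cor}) and Lemma \ref{lem:def of Cor^t cdvf}, which is precisely the content of the commutative square displayed just before Proposition \ref{prop:Cor on t, ur coh}, to obtain on the tame parts the identity
\begin{equation*}
\Res_{K_v/K}\circ\Cor^{\rmt}_{L/K}=\sum_{w\mid v}\Cor^{\rmt}_{L_w/K_v}\circ\Res_{L_w/L}.
\end{equation*}
Since $\Cor^{\rmt}_{L/K}(\alpha)$ lies in $H^{i+1,i}_{\tame}(K/k)$ by the construction (\ref{eq:Cor^t}), applying the local residue $\partial_v$ on $K_v$ and inserting this identity yields
\begin{equation*}
\partial_v(\Cor^{\rmt}_{L/K}(\alpha))=\sum_{w\mid v}\partial_v\bigl(\Cor^{\rmt}_{L_w/K_v}(\Res_{L_w/L}\alpha)\bigr).
\end{equation*}

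Next I would apply Lemma \ref{lem:Cor^t cdvf} to each finite extension $L_w/K_v$ of complete discrete valuation fields, in which $w$ is the unique extension of the valuation of $K_v$ and the residue fields are $k(v)$ and $k(w)$. This rewrites each summand as $\Cor_{k(w)/k(v)}(\partial_w(\Res_{L_w/L}\alpha))$, and using $\partial_w(\Res_{L_w/L}\alpha)=\partial_w(\alpha)$ from the factorization of the global residue through $L_w$, summing over $w\mid v$ gives exactly
\begin{equation*}
\partial_v(\Cor^{\rmt}_{L/K}(\alpha))=\sum_{w\mid v}\Cor_{k(w)/k(v)}(\partial_w(\alpha)),
\end{equation*}
which is the asserted commutativity.

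I expect the only genuinely delicate point to be the bookkeeping in the first step: verifying that the compatibility of corestriction with restriction to completions, established for the ambient groups by (C\ref{eq:Res vs Cor}), descends to the tame subgroups through Lemma \ref{lem:def of Cor^t cdvf}, and that the index set $\{w\mid v\}$ matches exactly the factors of $L\otimes_K K_v$. Everything else is a direct transport of the local computation through these identifications, and no cohomological input beyond Lemma \ref{lem:Cor^t cdvf} is required.
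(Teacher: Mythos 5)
Your proposal is correct and follows essentially the same route as the paper: the paper's proof likewise decomposes the diagram into two squares, the left being the compatibility of $\Cor^{\rmt}_{L/K}$ with restriction to completions (built into the construction (\ref{eq:Cor^t}) via $L\otimes_K K_v\simeq\prod_{w\mid v}L_w$ and (C\ref{eq:Res vs Cor})), and the right being exactly Lemma \ref{lem:Cor^t cdvf} applied to each $L_w/K_v$. Your explicit observation that the global residues $\partial_v,\partial_w$ factor through the completions is left implicit in the paper, as these residue maps are defined via the complete fields in the first place.
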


\begin{proof}
By definition of the corestriction map for the unramified tame cohomology, the diagram decomposes into the two squares
\begin{equation*}
\begin{xy}
\xymatrix{
H^{i+1,i}_{\tame}(L/k)\ar[r]\ar[d]_{\Cor^{\rmt}_{L/K}}&\bigoplus_{w\mid v}H^{i+1,i}_{\tame}(L_w)\ar[r]^{(\partial_w)~~~}\ar[d]_{\sum_{w\mid v}\Cor_{L_w/K_v}^{\rmt}}& \bigoplus_{w\mid v}H^{i,i-1}(k(w))\ar[d]^{\sum_{w\mid v}\Cor_{k(w)/k(v)}}\\
H^{i+1,i}_{\tame}(K/k)\ar[r]&H^{i+1,i}_{\tame}(K_v)\ar[r]^{\partial_v}& H^{i,i-1}(k(v)).
}
\end{xy}
\end{equation*} 
The left square is commutative by the construction of the corestriction map $\Cor^{\rmt}_{L/K}$. The second one is also commutative, which follows from  Lemma \ref{lem:Cor^t cdvf}. This completes the proof. 
\end{proof}

\begin{proof}[Proof of Proposition \ref{prop:Cor on t, ur coh}]
By Lemma \ref{lem:Cor on t, ur coh} together with the transitivity (C\ref{eq:Cor trans}) of the corestriction maps, we have a commutative diagram
\begin{equation*}
\begin{xy}
\xymatrix{
H^{i+1,i}_{\tame}(k(Y)/k)\ar[r]^{(\partial_y)_y\quad}\ar[d]_{\Cor^{\rmt}_{k(Y)/k(X)}}&\bigoplus_{y\in Y^{(1)}}H^{i,i-1}(k(y))~~~~~~\ar[d]^{\left(\sum_{y\mid x}\Cor_{k(y)/k(z)}\right)_{x}}\\
H^{i+1,i}_{\tame}(k(X)/k)\ar[r]^{(\partial_{x})_x\quad}&\bigoplus_{x\in X^{(1)}}H^{i,i-1}(k(x)),
}
\end{xy}
\end{equation*}
which immediately implies the assertion. This completes the proof. 
\end{proof}


\section{A pairing with the Suslin homology and its application}\label{sec:pairing}

Let $k$ be a field of characteristic $p\ge 0$ and $X$ a smooth geometrically connected variety over $k$. In \cite{Kahn11}, by extending Merkurjev's corepresentability theorem\cite[Theorem 2.10]{Mer08}, Bruno Kahn proved that there exists a cycle module $H^X_*$ over $k$ together with a natural isomorphism
\[
A^0(X,M_0)\simeq\Hom_{\bf CM}(H_*^X,M_*)
\]
for any cycle module $M_*$ over $k$\,(see \cite[Theorem 1.4]{Kahn11}), where $A^0(X,M_0)$ is the Chow group with coefficients in $M_0$ in the sense of Rost\cite[\S5]{Rost} and $\bf CM$ is the category of cycle modules over $k$. The corepresenting object $H^X_*$  he constructed is closely related with the $0$-th Suslin homology group $H^S_0(X)$. Namely, for any finitely generated field extension $K/k$, we have
\[
H_0^X(K)\simeq H^S_0(X_K).
\]
Moreover, the degree map $\Deg\colon H^S_0(X_K)\to\Z$ is an isomorphism for any such $K/k$ if and only if the natural projection map $H_*^X\to K^{\rm M}_*$ into the Milnor K-theory is an isomorphism. As a consequence (see \cite[Corollary 4.7]{Kahn11}), he also established a refinement of Merkurjev's characterization of universal triviality of the Chow group of zero-cycles \cite[Theorem 2.11]{Mer08}. In particular, he proved that if the degree map $\Deg\colon H^S_0(X_K)\to\Z$ is an isomorphism for any finitely generated field extension $K/k$, we have $M_0(k)\xrightarrow{\simeq}A^0(X,M_0)$ for any cycle module $M_*$ over $k$. By applying the result to the cycle module associated with \'etale cohomology $H^{i,j}(-)=H^{i}(-,\Z/\ell^n\Z(j))$~(cf.\ Remark \ref{rem:A^0}), for $X$ having universally trivial Suslin homology $H^S_0$, we obtain the triviality of the unramified cohomology, i.e.
\begin{equation}\label{eq:triv ur coh}
H^{i,j}(k)\xrightarrow{\simeq}H^{i,j}_{\ur}(X)
\end{equation}
unless $H^{i,j}(-)=H^{i}(-,\Z/p^n\Z(i-1))$ with $\ell=p$.

In this section, we investigate the exceptional case, i.e.\ the case where $\ell=p$ with $H^{i+1,i}(-)=H^{i+1}(-,\Z/p^n\Z(i))=H^1_{\et}(-,W_n\Omega^{i}_{\log})$, and prove a mod $p$ analogue to the above triviality result for unramified cohomology\,(see Corollary \ref{cor:pairing}). 
Let us assume that $k$ is a field of characteristic $p>0$ and consider the mod $p$ unramified cohomology
\[
H^{i+1,i}_{\ur}(X)=H^{i+1,i}_{\ur}(X,\Z/p\Z(i))=H^{1,i}_{\ur}(X,\Omega_{X,\log}^i).
\] 
We continue to use the same notation as in the previous section. As noticed in the first paragraph in the previous section, the same triviality as (\ref{eq:triv ur coh}) does not hold for the whole mod $p$ unramified cohomology. As a modification, we first replace the unramified cohomology $H^{i+1,i}_{\ur}(X)$ by its curve-tame subgroup $H^{i+1,i}_{\ct,\ur}(X/k)$. However, as it is unclear if the curve-tame subgroup can be described as the Chow group $A^0(X,M_0)$ for some cycle module $M_*$, it does not seem straightforward to apply the previous result \cite[Corollary 4.7]{Kahn11}. Instead, we follow the argument of Auel et al.\ in \cite[\S3]{ABBB} to recover a part of the method in \cite{Kahn11}. Namely, we construct a pairing of the form
\begin{equation*}
H^S_0(X)\times H^{i+1,i}_{\ct,\ur}(X/k)\to H^{i+1,i}(k)
\end{equation*}
which fulfills satisfactory conditions (see Theorem \ref{thm:pairing}). 

First of all, recall that
\begin{equation*}
H^S_0(X)\Def\Cok\bigl(c(\A^1_k,X)\xrightarrow{s_0^*-s_1^*}Z_0(X)\bigl),
\end{equation*}
where $c(\A^1_k,X)$ denotes the free abelian group generated by integral closed subschemes $\Gamma$ of $\A^1_k\times X$ such that the composite $\Gamma\subset \A^1_k\times X\to\A^1_k$ is a finite surjective morphism, and $s_0,s_1\colon\Spec k\to\A^1_k$ are the sections corresponding to the points $0,1\in\A^1_k$ respectively.

We begin by the pairing with the group $Z_0(X)$ of $0$-cycles,
\begin{equation}\label{eq:pairing with Z_0}
\langle-,-\rangle\colon Z_0(X)\times H^{i+1,i}_{\ct,\ur}(X/k)\to H^{i+1,i}(k),
\end{equation}
which is defined by the composite
\begin{equation*}
\begin{aligned}
Z_0(X)\times H^{i+1,i}_{\ct,\ur}(X/k)\subset& Z_0(X)\times H^{i+1,i}_{\ur}(X)\\
\xrightarrow{(x^*)_{x\in X_{(0)}}}&\bigoplus_{x\in X_{(0)}}H^{i+1,i}(k(x))\xrightarrow{\sum_{x\in X_{(0)}}\Cor_{k(x)/k}} H^{i+1,i}(k). 
\end{aligned}
\end{equation*}
Here, for each closed point $x\in X_{(0)}$, thanks to Corollary \ref{cor:ur coh cmp}\,(2), the restriction map
\[
x^*\colon H^{i+1,i}_{\ur}(X)\to H^{i+1,i}_{\ur}(\Spec k(x))=H^{i+1,i}(k(x))
\]
is well-defined. For any finitely generated field extension $K/k$, this construction similarly gives a pairing
\begin{equation*}
Z_0(X_K)\times H^{i+1,i}_{\ct,\ur}(X_K/K)\to H^{i+1,i}(K).
\end{equation*}
Then the pairing satisfies the following partial compatibility conditions, which are enough for our later application.


\begin{lem}\label{lem:pairing with Z_0}
Let $k,X$ be as above. 
Let $\langle-,-\rangle$ be the pairing given in (\ref{eq:pairing with Z_0}). 
\begin{enumerate}
\renewcommand{\labelenumi}{(\arabic{enumi})}
\item Let $K\Def k(X)$ be the function field. Let $\eta\in (X_{K})_{(0)}$ be the closed point associated with the generic point of $X$. Then the composite
\begin{equation*}
H^{i+1,i}_{\ct,\ur}(X/k)\to H^{i+1,i}_{\ct,\ur}(X_{K}/K)\xrightarrow{\langle\eta,-\rangle}H^{i+1,i}(K)
\end{equation*}
gives the natural inclusion map $H^{i+1,i}_{\ct,\ur}(X/k)\subset H^{i+1,i}(K)$. 

\item Let $x\in X_{(0)}$ be a closed point whose residue extension $k(x)/k$ is separable. Let $\alpha\in H^{i+1,i}_{\ct,\ur}(X/k)$ be an arbitrary element. Then we have $\langle x_{K},\alpha_{K}\rangle=\langle x,\alpha\rangle_K$ in $H^{i+1,i}(K)$ for any finitely generated field extension $K/k$. 
\end{enumerate}
\end{lem}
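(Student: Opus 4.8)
The plan is to verify the two compatibility conditions by unwinding the definition of the pairing (\ref{eq:pairing with Z_0}) in each case, reducing everything to the basic properties (C\ref{eq:Cor trans})--(C\ref{eq:Res vs Cor}) of the corestriction maps and to the functoriality of the unramified curve-tame cohomology established in Proposition \ref{prop:t, ur coh cntrv}.

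\textbf{Part (1).} First I would identify the closed point $\eta\in (X_K)_{(0)}$ concretely. Since $K=k(X)$, the base change $X_K=X\times_k\Spec k(X)$ has a canonical $K$-rational point $\eta$ coming from the diagonal, i.e.\ from the generic point of $X$ regarded as a $k(X)=K$-valued point of $X$; its residue field is $k(\eta)=K$ and the corestriction $\Cor_{k(\eta)/K}=\Cor_{K/K}$ is the identity. Thus by the very definition of $\langle\eta,-\rangle$, the composite in question is just the pullback $\eta^*$ along the section $\Spec K\to X_K$ followed by the identity corestriction. The point is then that this pullback, precomposed with the restriction $H^{i+1,i}_{\ct,\ur}(X/k)\to H^{i+1,i}_{\ct,\ur}(X_K/K)$ (the map induced by the projection $X_K\to X$), agrees with the natural inclusion $H^{i+1,i}_{\ct,\ur}(X/k)\hookrightarrow H^{i+1,i}(K)$. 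This is a formal consequence of the contravariant functoriality in Proposition \ref{prop:t, ur coh cntrv}: the composite $\Spec K\xrightarrow{\eta}X_K\to X$ is precisely the generic point inclusion $\Spec k(X)\to X$, so on cohomology the two pullbacks compose to the canonical restriction-to-the-generic-point map, which is by definition the inclusion of $H^{i+1,i}_{\ct,\ur}(X/k)$ into $H^{i+1,i}(k(X))=H^{i+1,i}(K)$.

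\textbf{Part (2).} Here I would trace through both sides of the claimed equality $\langle x_K,\alpha_K\rangle=\langle x,\alpha\rangle_K$. Unwinding definitions, the right-hand side is $\Res_{K/k}\bigl(\Cor_{k(x)/k}(x^*\alpha)\bigr)$, while the left-hand side involves decomposing the base-changed point $x_K\in(X_K)_{(0)}$ into its components and summing the corestrictions of the pullbacks of $\alpha_K$. Since $k(x)/k$ is \emph{separable}, by Remark \ref{rem:div=>excellent} the tensor product $k(x)\otimes_k K$ is a finite product of fields $\prod_i k(x)_i'$, and these fields are exactly the residue fields of the finitely many closed points $x_i$ of $X_K$ lying over $x$. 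The key identity is then property (C\ref{eq:Res vs Cor}), namely
\begin{equation*}
\Res_{K/k}\circ\Cor_{k(x)/k}=\sum_i\Cor_{k(x)_i'/K}\circ\Res_{k(x)_i'/k(x)},
\end{equation*}
applied to the class $x^*\alpha\in H^{i+1,i}(k(x))$. Combining this with the compatibility of pullback with base change—namely that restricting $\alpha$ to the point $x$ and then to each $k(x)_i'$ equals restricting $\alpha_K$ to the corresponding point $x_i$ of $X_K$—converts the right-hand side into the left-hand side term by term.

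\textbf{Main obstacle.} The routine parts are the bookkeeping with corestriction transitivity and projection formulas; the step that requires genuine care is the matching of the scheme-theoretic fibre of $x$ under base change with the algebraic decomposition $k(x)\otimes_k K\simeq\prod_i k(x)_i'$, i.e.\ checking that the closed points of $X_K$ over $x$ are precisely indexed by the factor fields and carry the expected residue fields, and that the pullback $\alpha_K\mapsto x_i^*(\alpha_K)$ is compatible with $\Res_{k(x)_i'/k(x)}\circ x^*$. This is exactly where the separability hypothesis on $k(x)/k$ is used (so that the fibre is reduced and étale over $\Spec K$), and it is what makes (C\ref{eq:Res vs Cor}) applicable. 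Once this identification is in place, both parts follow by direct substitution.
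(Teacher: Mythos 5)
Your proposal is correct and takes essentially the same route as the paper's own proof: part (1) is the identical functoriality argument (Proposition \ref{prop:t, ur coh cntrv} applied to the factorization $\Spec K\xrightarrow{\eta}X_K\to X$ of the generic point, with $\Cor_{k(\eta)/K}=\Cor_{K/K}=\mathrm{id}$), and part (2) is the paper's computation verbatim, using separability of $k(x)/k$ to see that the fibre $\Spec\bigl(k(x)\otimes_k K\bigr)$ is reduced, so $x_K=\sum_{y\mid x}y$, and then applying (C\ref{eq:Res vs Cor}) to $x^*\alpha$ together with the compatibility $y^*(\alpha_K)=\Res_{K(y)/k(x)}(x^*\alpha)$. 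Nothing essential is missing.
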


\begin{proof}
(1) By Proposition \ref{prop:t, ur coh cntrv}, we have the commutative diagram
\begin{equation*}
\begin{xy}
\xymatrix{
H^{i+1,i}_{\ct,\ur}(X/k)\ar[r]\ar@{^{(}->}[d]&H^{i+1,i}_{\ct,\ur}(X_K/K)\ar@{^{(}->}[d]\\
H^{i+1,i}_{\ur}(X)\ar@{_{(}->}[rd]\ar[r]&H^{i+1,i}_{\ur}(X_K)\ar[d]^{\eta^*}\\
&H^{i+1,i}(K).
}
\end{xy}
\end{equation*}
Thus, the assertion holds.

(2) Let $f\colon X_K\to X$ be the natural projection map. Then the fiber is given by $f^{-1}(x)=\Spec (K\otimes_kk(x))$. As $k(x)/k$ is a finite separable extension, the fiber $f^{-1}(x)$ is reduced and hence $x_K=\sum_{y\mid x}y$ in $Z_0(X_K)$. Therefore, by the equation (C\ref{eq:Res vs Cor}), we have
\begin{equation*}
\begin{aligned}
\langle x,\alpha\rangle_K&=\Res_{K/k}\circ\Cor_{k(x)/k}(x^*\alpha)\\
&=\sum_{y\mid x}\Cor_{K(y)/K}\circ\Res_{K(y)/k(x)}(x^*(\alpha))\\
&=\sum_{y\mid x}\Cor_{K(y)/K}(y^*(\alpha_K))=\langle x_K,\alpha_K\rangle. 
\end{aligned}
\end{equation*}
This completes the proof. 
\end{proof}

Now we prove the key result. 

\begin{thm}\label{thm:pairing}
Let $X$ be a smooth geometrically connected  variety over a field $k$ of characteristic $p>0$. Let $i\ge 0$ be an integer. For any finitely generated field extension $K/k$, there exists a pairing
\begin{equation*}
H^S_0(X_K)\times H^{i+1,i}_{\ct,\ur}(X_K/K)\to H^{i+1,i}(K)
\end{equation*} 
which satisfies the following conditions.
\begin{enumerate}
\renewcommand{\labelenumi}{(\arabic{enumi})}
\item If $\eta\in (X_{k(X)})_{(0)}$ is the closed point associated with the generic point $\Spec k(X)\to X$, then the composite $H^{i+1,i}_{\ct,\ur}(X/k)\to H^{i+1,i}_{\ct,\ur}(X_{k(X)}/k(X))\xrightarrow{\langle\eta,-\rangle}H^{i+1,i}(k(X))$ 
is given by the natural inclusion map $H^{i+1,i}_{\ct,\ur}(X/k)\subset H^{i+1,i}(k(X))$. 

\item If $x\in X_{(0)}$ is a closed point such that the residue extension $k(x)/k$ is separable, then for any element $\alpha\in H^{i+1,i}_{\ct,\ur}(X/k)$ and for any finitely generated field extension $K/k$, we have $\langle x_{K},\alpha_{K}\rangle=\langle x,\alpha\rangle_K$ in $H^{i+1,i}(K)$. 
\end{enumerate}
\end{thm}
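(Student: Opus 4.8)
The plan is to construct the desired pairing on Suslin homology by descending the pairing $\langle-,-\rangle$ on $Z_0(X_K)$ from (\ref{eq:pairing with Z_0}) through the defining quotient
\begin{equation*}
H^S_0(X_K)=\Cok\bigl(c(\A^1_K,X_K)\xrightarrow{s_0^*-s_1^*}Z_0(X_K)\bigr).
\end{equation*}
For a fixed $\alpha\in H^{i+1,i}_{\ct,\ur}(X_K/K)$, the assignment $z\mapsto\langle z,\alpha\rangle$ is already a homomorphism $Z_0(X_K)\to H^{i+1,i}(K)$, so it suffices to show it kills the image of $s_0^*-s_1^*$. The heart of the matter is therefore a \emph{single relation}: for any integral closed curve $\Gamma\subset\A^1_K\times X_K$ finite surjective over $\A^1_K$, one must prove
\begin{equation*}
\langle s_0^*(\Gamma),\alpha\rangle=\langle s_1^*(\Gamma),\alpha\rangle
\end{equation*}
in $H^{i+1,i}(K)$. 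Once this is established, the pairing factors uniquely through $H^S_0(X_K)$, and it is well-defined independently of $\alpha$ by bilinearity; conditions (1) and (2) then transfer verbatim from Lemma \ref{lem:pairing with Z_0}, since $\langle\eta,-\rangle$ and the compatibility of $\langle x_K,\alpha_K\rangle$ with base change are statements about representing cycles, and the Suslin classes of $\eta$ and of $x$ are represented by those cycles.

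The key step, proving the homotopy relation, is where the curve-tame hypothesis on $\alpha$ does its work, and I would reduce it to the reciprocity sequence of Theorem \ref{thm:mod p Faddeev} (equivalently Remark \ref{rem:H, RC}). Let $\widetilde{\Gamma}$ be the normalization of $\Gamma$; it is a normal $K$-curve equipped with a finite surjective map $\pi\colon\widetilde{\Gamma}\to\A^1_K$ and a $k$-morphism $g\colon\widetilde{\Gamma}\to X$. By Definition \ref{def:ct, ur coh} the restriction $g^*\alpha$ lies in $H^{i+1,i}_{\tame,\ur}(\widetilde{\Gamma}/K)$, so it admits well-defined residues and its image $\Cor^{\rmt}_{K(\widetilde{\Gamma})/K(t)}(g^*\alpha)$ under the pushforward (\ref{eq:Cor^t}) along $\pi$ lands in $H^{i+1,i}_{\tame}(K(t)/K)$. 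The plan is then to compute $\langle s_j^*(\Gamma),\alpha\rangle$ as a sum of corestrictions of residues of this pushed-forward class at the fibers $t=0$ and $t=1$. Concretely, using Lemma \ref{lem:Cor on t, ur coh} to commute the corestriction past the residue maps, one identifies $\langle s_j^*(\Gamma),\alpha\rangle$ (up to the sign from Lemma \ref{lem:s_v dvr}) with $\Cor_{k(j)/K}\bigl(\partial_{t=j}\bigl(\Cor^{\rmt}_{K(\widetilde{\Gamma})/K(t)}(g^*\alpha)\bigr)\bigr)$ for $j=0,1$.

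The final step is to invoke the homotopy property (Remark \ref{rem:H, RC}(1)), which asserts the exactness of
\begin{equation*}
0\to H^{i+1,i}(K)\to H^{i+1,i}_{\tame}(K(t)/K)\xrightarrow{(\partial_x)_x}\bigoplus_{x\in\A^1_K}H^{i,i-1}(k(x))\to 0,
\end{equation*}
and in particular that the total residue of any tame class summing the fiber at $t=0$ against that at $t=1$ is governed by the constant part in $H^{i+1,i}(K)$; combining this with the reciprocity complex for $\bbP^1$ forces the residues at $t=0$ and $t=1$ to give the same value after corestriction to $K$, which is exactly the required relation. The main obstacle I anticipate is the bookkeeping in this last reduction: one must verify that the element $\langle s_j^*(\Gamma),\alpha\rangle$ really coincides with the residue-at-$j$ of the $\bbP^1$-pushforward, which requires care because the cycle $s_j^*(\Gamma)$ decomposes as a sum over the points of $\widetilde{\Gamma}$ lying over $t=j$ with their multiplicities, and one has to match these multiplicities and residue fields against the ramification data of $\pi$ at $t=j$. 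Controlling this matching, together with the sign from Lemma \ref{lem:s_v dvr} relating the reduction map $H^{i+1,i}(\sO_v)\to H^{i+1,i}(k(v))$ to $\partial_v\circ[-,\pi\}$, and ensuring the separability assumptions needed to apply (C\ref{eq:Res vs Cor}) are met after passing to the normalization, is the technically delicate part; everything else is a formal consequence of the functoriality and reciprocity already assembled in \S\ref{sec:log HW} and \S\ref{sec:ur tame}.
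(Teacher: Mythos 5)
Your proposal is correct and follows essentially the same route as the paper's own proof: the paper likewise descends the pairing (\ref{eq:pairing with Z_0}) through the Suslin relation, reduces via the normalization $\pi\colon\widetilde{\Gamma}\to\Gamma$ and the divisor identity $\Gamma_0-\Gamma_1=\pi_*(\widetilde{\Gamma}_0-\widetilde{\Gamma}_1)$ together with Lemma \ref{lem:pairing 1} to a normal curve $C$ finite surjective over $\A^1_k$, and then carries out exactly your residue/corestriction bookkeeping in Lemma \ref{lem:pairing 2} (via Lemmas \ref{lem:s_v dvr} and \ref{lem:Cor on t, ur coh} and the projection formula) to get $\langle C_j,\alpha\rangle=s_j^*\Cor^{\rmt}_{C/\A^1_k}(\alpha)$. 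The only divergence is your final step: the paper needs no $\bbP^1$-reciprocity, since by Proposition \ref{prop:t, ur coh}\,(3) the class $\Cor^{\rmt}_{C/\A^1_k}(\alpha)$ lies in $H^{i+1,i}_{\tame,\ur}(\A^1_k/k)\simeq H^{i+1,i}(k)$, whence $s_0^*=s_1^*$ on it immediately --- and note that, as your last paragraph's appeal to Lemma \ref{lem:s_v dvr} correctly indicates, the evaluation at $t=j$ is $\partial_{t=j}\bigl([\beta,t-j\}\bigr)$ up to sign, not $\partial_{t=j}(\beta)$ itself, which vanishes because the pushed-forward class $\beta$ is unramified on $\A^1_K$.
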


For the proof, we need two lemmas.

\begin{lem}\label{lem:pairing 1}
With the same notation as in Theorem \ref{thm:pairing}, let $\phi\colon C\to X$ be a morphism from a normal $k$-curve $C$ into $X$. Then for any $0$-cycle $z\in Z_0(C)$ and any element $\alpha\in H^{i+1,i}_{\ct,\ur}(X/k)$, we have
\begin{equation*}
\langle\phi_*(z),\alpha\rangle=\langle z,\phi^*(\alpha)\rangle, 
\end{equation*}
where $\langle-,-\rangle$ is the pairing given by (\ref{eq:pairing with Z_0}).
\end{lem}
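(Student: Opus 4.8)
The plan is to reduce to the case of a single closed point and then unwind both sides of the claimed identity using the functoriality of the restriction maps on unramified cohomology together with the formal properties of the corestriction, namely transitivity (C\ref{eq:Cor trans}) and the degree relation (C\ref{eq:Cor Res =deg}). By the bilinearity of the pairing $\langle-,-\rangle$ of (\ref{eq:pairing with Z_0}) and the additivity of $\phi_*$, it suffices to treat the case $z=[c]$ for a single closed point $c\in C_{(0)}$. Before computing, I would first record that $\phi^*(\alpha)$ genuinely lands in the curve-tame cohomology $H^{i+1,i}_{\ct,\ur}(C/k)$, so that the right-hand pairing is defined at all: this is exactly what Definition \ref{def:ct, ur coh} provides when applied with $K=k$ to the $k$-morphism $\phi\colon C\to X$, using the identification $H^{i+1,i}_{\tame,\ur}(C/k)=H^{i+1,i}_{\ct,\ur}(C/k)$ from Proposition \ref{prop:t, ur coh}(1); it also follows from the functoriality of Proposition \ref{prop:t, ur coh cntrv}.

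Next I would set $x\Def\phi(c)\in X_{(0)}$, which is again a closed point since $\phi$ is a morphism of finite type $k$-schemes, and write $d\Def[k(c):k(x)]$ for the degree of the induced finite residue extension $k(x)\hookrightarrow k(c)$. Then $\phi_*[c]=d\cdot[x]$ in $Z_0(X)$, so by the definition (\ref{eq:pairing with Z_0}) of the pairing the left-hand side is $\langle\phi_*[c],\alpha\rangle=d\cdot\Cor_{k(x)/k}(x^*\alpha)$. For the right-hand side, the key input is the compatibility $c^*\circ\phi^*=\Res_{k(c)/k(x)}\circ\, x^*$ of restriction maps on unramified cohomology; this follows from the contravariant functoriality of $H^{i+1,i}_{\ur}(-)$ on regular schemes (Corollary \ref{cor:ur coh cmp}(2)) applied to the factorization $\Spec k(c)\to\Spec k(x)\to X$ of the composite $\phi\circ\iota_c$. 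Hence
\[
\langle[c],\phi^*\alpha\rangle=\Cor_{k(c)/k}(c^*\phi^*\alpha)=\Cor_{k(c)/k}\bigl(\Res_{k(c)/k(x)}(x^*\alpha)\bigr).
\]
Applying transitivity (C\ref{eq:Cor trans}) to factor $\Cor_{k(c)/k}=\Cor_{k(x)/k}\circ\Cor_{k(c)/k(x)}$ and then the identity $\Cor_{k(c)/k(x)}\circ\Res_{k(c)/k(x)}=d$ from (C\ref{eq:Cor Res =deg}), this becomes $d\cdot\Cor_{k(x)/k}(x^*\alpha)$, which matches the left-hand side.

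The only non-formal ingredients are the well-definedness of $\phi^*\alpha$ as a curve-tame class and the compatibility $c^*\circ\phi^*=\Res_{k(c)/k(x)}\circ\,x^*$, and both are immediate consequences of the functoriality already established in Corollary \ref{cor:ur coh cmp}(2), Proposition \ref{prop:t, ur coh cntrv} and Proposition \ref{prop:t, ur coh}(1). Everything else is the standard corestriction--restriction projection-formula bookkeeping carried out via (C\ref{eq:Cor trans}) and (C\ref{eq:Cor Res =deg}), so I do not expect any genuine obstacle; the conceptual work of making $\phi^*\alpha$ tame was precisely the reason for introducing the curve-tame subgroup in the first place.
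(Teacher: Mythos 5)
Your proposal is correct and takes essentially the same route as the paper's proof: both reduce to a single closed point $c$, use $\phi_*[c]=[k(c):k(x)]\,x$, the functoriality identity $c^*\circ\phi^*=\Res_{k(c)/k(x)}\circ x^*$, the transitivity (C\ref{eq:Cor trans}) and the degree formula (\ref{eq:Cor Res =deg}), merely running the chain of equalities in the opposite direction. Your explicit preliminary check that $\phi^*\alpha$ lands in $H^{i+1,i}_{\ct,\ur}(C/k)$ via Proposition \ref{prop:t, ur coh cntrv} is a harmless piece of care that the paper leaves implicit.
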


\begin{proof}
We adapt the same argument as in the proof of \cite[Lemma 3.2]{ABBB}.
Without loss of generality, we may assume that $z$ is a closed point of $C$. Recall that $\phi_*(z)=[k(z):k(x)] x$, where we put $x\Def\phi(z)$. Since the diagram 
\begin{equation*}
\begin{xy}
\xymatrix{
\Spec k(z)\ar[r]^{~~z}\ar[d]&C\ar[d]^{\phi}\\
\Spec k(x)\ar[r]^{~~x}& X
}
\end{xy}
\end{equation*}
is commutative, by the transitivity of corestriction maps~(C\ref{eq:Cor trans}) together with the equality $\Cor_{k(z)/k(x)}\circ\Res_{k(z)/k(x)}=[k(z):k(x)]$~(cf.\ (\ref{eq:Cor Res =deg})), we have
\begin{equation*}
\begin{aligned}
\langle\phi_*(z),\alpha\rangle
&=\Cor_{k(x)/k}(\Cor_{k(z)/k(x)}(\Res_{k(z)/k(x)}(x^*\alpha)))\\
&=\Cor_{k(z)/k}(\Res_{k(z)/k(x)}(x^*\alpha))\\
&=\Cor_{k(z)/k}(z^*\phi^*\alpha)=\langle z,\phi^*\alpha\rangle.
\end{aligned}
\end{equation*}
This completes the proof of the lemma. 
\end{proof}

\begin{lem}\label{lem:pairing 2}
With the same notation as in Theorem \ref{thm:pairing}, let $\phi\colon \Gamma\to C$ be a finite surjective morphism between normal $k$-curves. Let $x\colon \Spec k(x)\to C$ be a closed point and $\Gamma_x\Def\phi^{-1}(x)$ the scheme-theoretic fiber of $x$. Then for any $\alpha\in H^{i+1,i}_{\tame,\ur}(\Gamma/k)$, we have
\begin{equation*}
\langle\Gamma_x,\alpha\rangle=\langle x,\Cor^{\rmt}_{\Gamma/C}(\alpha)\rangle,
\end{equation*}
where $\langle-,-\rangle$ is the pairing given by (\ref{eq:pairing with Z_0}).\end{lem}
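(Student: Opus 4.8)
The plan is to prove the identity $\langle\Gamma_x,\alpha\rangle=\langle x,\Cor^{\rmt}_{\Gamma/C}(\alpha)\rangle$ by reducing everything to the completed local situation at the point $x$ and then invoking the compatibility of the corestriction map with residues that was established in the previous section. First I would unwind both sides of the claimed equation using the definition of the pairing \eqref{eq:pairing with Z_0}. The left-hand side is $\sum_{y\in\phi^{-1}(x)}(\text{mult}_y)\cdot\Cor_{k(y)/k}(y^*\alpha)$, where the multiplicities come from the scheme-theoretic fiber $\Gamma_x$, while the right-hand side is $\Cor_{k(x)/k}(x^*\Cor^{\rmt}_{\Gamma/C}(\alpha))$. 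By the transitivity property (C\ref{eq:Cor trans}) of corestriction maps, it suffices to prove the purely local identity relating the residues at the points $y$ lying over $x$ to the residue at $x$, after which summing and applying $\Cor_{k(x)/k}$ yields the global statement.

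The heart of the matter is Lemma \ref{lem:s_v dvr}, which identifies the restriction map $H^{i+1,i}(\sO_v)\to H^{i+1,i}(k(v))$ with the composite $[-,\pi\}$ followed by the residue $\partial_v$, up to the sign $(-1)^i$. The second step, therefore, is to express each evaluation $y^*\alpha$ and $x^*\Cor^{\rmt}_{\Gamma/C}(\alpha)$ through this description: choosing a uniformizer $\pi$ at $x$ on $C$ and pulling it back to $\Gamma$, one rewrites $y^*\alpha$ (up to sign) as $\partial_{w_y}([\alpha,\pi\})$ for the valuation $w_y$ above $x$, and similarly $x^*\Cor^{\rmt}_{\Gamma/C}(\alpha)$ as $\partial_v([\Cor^{\rmt}_{\Gamma/C}(\alpha),\pi\})$. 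Here the multiplicity of $y$ in $\Gamma_x$ is exactly the ramification index $e(w_y/v)=w_y(\pi)$, which is precisely the factor one needs so that $[\alpha,\pi\}$ evaluated at $w_y$ produces the correct weighted residue. The projection formula (C\ref{eq:proj formula 1}) lets one pass the element $\pi$ (pulled back from the base) through the corestriction, so that $\Cor^{\rmt}_{\Gamma/C}([\alpha,\pi\})=[\Cor^{\rmt}_{\Gamma/C}(\alpha),\pi\}$ at the level of tame cohomology of the completions.

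With these reformulations in place, the third step is to apply the key compatibility Lemma \ref{lem:Cor on t, ur coh} (or its local version Lemma \ref{lem:Cor^t cdvf}), which states that the corestriction $\Cor^{\rmt}$ commutes with the residue maps in the sense that $\partial_v\circ\Cor^{\rmt}_{\Gamma/C}=\sum_{w\mid v}\Cor_{k(w)/k(v)}\circ\partial_w$. Combining this with the projection-formula rewriting and Lemma \ref{lem:s_v dvr} collapses the right-hand side $\Cor_{k(x)/k}(x^*\Cor^{\rmt}_{\Gamma/C}(\alpha))$ into $\sum_{y\mid x}\Cor_{k(x)/k}\circ\Cor_{k(y)/k(x)}(\text{weighted } y^*\alpha)$, which by transitivity of corestriction equals the left-hand side $\langle\Gamma_x,\alpha\rangle$. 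I expect the main obstacle to be bookkeeping around the uniformizer and the ramification indices: one must verify that the local multiplicities in the scheme-theoretic fiber $\Gamma_x$ match the valuation-theoretic factors $w_y(\pi)$ produced when the element $\pi$ is pulled back along $\phi$ to $\Gamma$, and that the sign $(-1)^i$ from Lemma \ref{lem:s_v dvr} cancels consistently on both sides. The hypothesis $\alpha\in H^{i+1,i}_{\tame,\ur}(\Gamma/k)$ is exactly what guarantees that all the relevant classes lie in the tame subgroups where $\Cor^{\rmt}$ and the residue maps are defined and behave compatibly, so no further integrality or separability assumptions should be needed.
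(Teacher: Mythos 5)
Your proposal is correct and follows essentially the same route as the paper's own proof: starting from $\langle x,\Cor^{\rmt}_{\Gamma/C}(\alpha)\rangle$, it combines Lemma \ref{lem:s_v dvr}, the projection formula (C\ref{eq:proj formula 1}), and the residue--corestriction compatibility of Lemma \ref{lem:Cor on t, ur coh}, with the ramification indices $e_y=w_y(\pi_x)$ matching the multiplicities in the scheme-theoretic fiber $\Gamma_x$, exactly as in the paper's chain of equalities. The one detail you leave implicit (as does the paper) is that $\partial_y([\alpha,u\})=0$ for a unit $u$ because $\alpha$ is unramified at $y$, which is what justifies $\partial_y([\alpha,\phi^*(\pi_x)\})=e_y\,\partial_y([\alpha,\pi_y\})$.
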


\begin{proof}
Fix a uniformizer $\pi_x\in\sO_{C,x}$ at $x$. Moreover, for each point $y\in\Gamma$ lying above $x$, fix a uniformizer $\pi_y\in\sO_{\Gamma,y}$ and put $e_y\Def v_y(\pi_x)$. By using Lemmas \ref{lem:s_v dvr} and \ref{lem:Cor on t, ur coh} together with the formula (C\ref{eq:proj formula 1}), one can compute
\begin{equation*}
\begin{aligned}
\langle x,\Cor^{\rmt}_{\Gamma/C}(\alpha)\rangle
&=\Cor_{k(x)/k}(x^*\Cor^{\rmt}_{\Gamma/C}(\alpha))\\
&=(-1)^i\Cor_{k(x)/k}(\partial_x([\Cor^{\rmt}_{\Gamma/C}(\alpha),\pi_x\}))\\
&=(-1)^i\Cor_{k(x)/k}(\partial_x(\Cor^{\rmt}_{k(\Gamma)/k(C)}([\alpha,\phi^*(\pi_x)\})))\\
&=(-1)^i\Cor_{k(x)/k}(\sum_{y\mid x}\Cor_{k(y)/k(x)}(\partial_y([\alpha,\phi^*(\pi_x)\})))\\
&=\sum_{y\mid x}e_y\Cor_{k(y)/k}((-1)^i\partial_y([\alpha,\pi_y\}))\\
&=\sum_{y\mid x}e_y\Cor_{k(y)/k}(y^*\alpha)
=\sum_{y\mid x}e_y\langle y,\alpha\rangle
=\langle\Gamma_x,\alpha\rangle. 
\end{aligned}
\end{equation*}
This completes the proof. 
\end{proof}

\begin{proof}[Proof of Theorem \ref{thm:pairing}]
If one proves the pairing (\ref{eq:pairing with Z_0}) factors through $H^S_0(X)\times H^{i+1,i}_{\ct,\ur}(X/k)$, then the required conditions (i) and (ii) are immediate from Lemma \ref{lem:pairing with Z_0}. Let $\alpha\in H^{i+1,i}_{\ct,\ur}(X/k)$ be an arbitrary element. Let $\Gamma\subset\A^1_k\times X$ be an integral closed subscheme such that the projection $\phi\colon\Gamma\to\A^1_k=\Spec k[t]$ is finite and surjective. Let $\Gamma_0\Def\phi^{-1}(0)$ and $\Gamma_1\Def\phi^{-1}(1)$ be the scheme-theoretic fibers of $0$ and $1$ respectively. We have to show that
\begin{equation*}
\langle{\rm pr}_{X*}(\Gamma_0-\Gamma_{1}),\alpha\rangle=0.
\end{equation*} 
First note that 
\begin{equation*}
\Gamma_0-\Gamma_1=\phi^*{\rm div}_{\A^1_k}\Bigl(\frac{t}{t-1}\Bigl)={\rm div}_{\Gamma}\Bigl(\frac{\phi^*(t)}{\phi^*(t)-1}\Bigl)=\pi_*{\rm div}_{\widetilde{\Gamma}}\Bigl(\frac{\phi^*(t)}{\phi^*(t)-1}\Bigl)=\pi_*(\widetilde{\Gamma}_0-\widetilde{\Gamma}_1),
\end{equation*}
where $\pi\colon\widetilde{\Gamma}\to\Gamma$ is the normalization of the curve $\Gamma$. Thus by applying Lemma \ref{lem:pairing 1} to the composition $\widetilde{\Gamma}\xrightarrow{\pi}\Gamma\xrightarrow{{\rm pr}_X}X$, we are reduced to prove that for any finite surjective morphism $\phi\colon C\to\A^1_k$ from a normal connected $k$-curve and any element $\alpha\in H^{i+1,i}_{\tame,\ur}(C/k)$, we have $\langle C_0,\alpha\rangle=\langle C_1,\alpha\rangle$. However, according to Lemma \ref{lem:pairing 2}, we have
\begin{equation*}
\langle C_i,\alpha\rangle=s_i^*\Cor^{\rmt}_{C/\A^1_k}(\alpha)
\end{equation*} 
for $i=0,1$. On the other hand, by Proposition \ref{prop:t, ur coh}(3), we have the natural isomorphism $H^{i+1,i}(k)\xrightarrow{\simeq}H^{i+1,i}_{\tame,\ur}(\A^1_k/k)$. Therefore, $s_0^*=s_1^*$ on $H^{i+1,i}_{\tame,\ur}(\A^1_k/k)$, which implies the desired equality $\langle C_0,\alpha\rangle=\langle C_1,\alpha\rangle$. This completes the proof.   
\end{proof}

As an application  of the theorem, we have the following.

\begin{cor}\label{cor:pairing}
Let $X$ be a smooth geometrically connected variety  over a field $k$ of characteristic $p>0$. Suppose that for any finitely generated field extension $K/k$, the degree map induces an isomorphism $\Deg\colon H^S_0(X_K)\xrightarrow{\simeq}\Z$. Then for any integer $i\ge 0$, we have the natural isomorphism $H^{i+1,i}(k)\xrightarrow{\simeq}H^{i+1,i}_{\ct,\ur}(X/k)$. 
\end{cor}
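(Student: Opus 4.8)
The plan is to deduce Corollary \ref{cor:pairing} from Theorem \ref{thm:pairing} by exhibiting mutually inverse maps between $H^{i+1,i}(k)$ and $H^{i+1,i}_{\ct,\ur}(X/k)$. In one direction, the structural morphism $X\to\Spec k$ induces, via the contravariant functoriality of curve-tame cohomology (Proposition \ref{prop:t, ur coh cntrv}), a pullback map $H^{i+1,i}(k)=H^{i+1,i}_{\ct,\ur}(\Spec k/k)\to H^{i+1,i}_{\ct,\ur}(X/k)$. The geometric connectedness of $X$ guarantees $X_K$ is integral for every finitely generated $K/k$, so everything is well-defined. The content of the corollary is that this pullback is an isomorphism, and I would produce an inverse out of the pairing of Theorem \ref{thm:pairing} specialized at the generic point.

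Concretely, the hypothesis $\Deg\colon H^S_0(X_K)\xrightarrow{\simeq}\Z$ means the class of any closed point of degree one (equivalently, the generic-point class $\eta$ normalized appropriately) generates $H^S_0(X_K)$ and is identified with $1\in\Z$. First I would fix $K=k(X)$ and consider the canonical class $\eta\in(X_{k(X)})_{(0)}$ attached to the generic point; pairing with it gives a map $\langle\eta,-\rangle\colon H^{i+1,i}_{\ct,\ur}(X_{k(X)}/k(X))\to H^{i+1,i}(k(X))$. Precomposing with the pullback $H^{i+1,i}_{\ct,\ur}(X/k)\to H^{i+1,i}_{\ct,\ur}(X_{k(X)}/k(X))$ recovers, by Theorem \ref{thm:pairing}(1), exactly the natural inclusion $H^{i+1,i}_{\ct,\ur}(X/k)\hookrightarrow H^{i+1,i}(k(X))$. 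Since this inclusion is injective, I conclude immediately that the pullback $H^{i+1,i}(k)\to H^{i+1,i}_{\ct,\ur}(X/k)$ is injective as well, because the composite $H^{i+1,i}(k)\to H^{i+1,i}_{\ct,\ur}(X/k)\hookrightarrow H^{i+1,i}(k(X))$ is the base-field restriction, which is itself injective (by the injectivity property, Corollary \ref{cor:inj, codim one purity}(1), applied to the function field, or simply because $X$ has a rational point up to degree-one cycle data from the Suslin-homology hypothesis).

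For surjectivity I would use the degree hypothesis more seriously. Given $\alpha\in H^{i+1,i}_{\ct,\ur}(X/k)$, the idea is to show $\alpha$ lies in the image of the base field. Choose any $0$-cycle $z$ on $X$ of degree one; the universal triviality of Suslin homology forces its class to coincide in $H^S_0(X)$ with that of any degree-one cycle, and pairing against $\alpha$ produces a class $\langle z,\alpha\rangle\in H^{i+1,i}(k)$. The key is to verify that the pullback of $\langle z,\alpha\rangle$ back to $H^{i+1,i}(k(X))$ equals $\alpha$ itself, using the compatibility conditions (1) and (2) of Theorem \ref{thm:pairing} together with the fact that $z$ and $\eta$ have the same image in $H^S_0(X_{k(X)})\simeq\Z$. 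Since the pairing factors through Suslin homology and $H^S_0(X_K)\simeq\Z$ is generated by the degree-one class, the two pairings $\langle\eta,\alpha_{k(X)}\rangle$ and $\langle z_{k(X)},\alpha_{k(X)}\rangle$ agree, and the latter equals $\langle z,\alpha\rangle_{k(X)}$ by condition (2); comparing with condition (1), which identifies $\langle\eta,\alpha_{k(X)}\rangle$ with $\alpha$ viewed in $H^{i+1,i}(k(X))$, forces $\alpha=(\langle z,\alpha\rangle)_{k(X)}$, i.e.\ $\alpha$ comes from the base field.

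The main obstacle I anticipate is the separability caveat in Theorem \ref{thm:pairing}(2): that compatibility is only asserted for closed points with separable residue field, so I cannot freely pick an arbitrary degree-one $0$-cycle and transport its pairing along base change. To handle this I would work at the generic point, where condition (1) is unconditional, and arrange the degree-one cycle comparison at the level of Suslin homology rather than on the nose; the universal triviality hypothesis is precisely what lets me replace a possibly-inseparable degree-one cycle by the generic class in $H^S_0$. Making this replacement rigorous — tracking that the pairing depends only on the Suslin-homology class and that the generic class maps to $1$ under the degree isomorphism over $k(X)$ — is the delicate point; once it is in place, injectivity and surjectivity combine to give the asserted isomorphism $H^{i+1,i}(k)\xrightarrow{\simeq}H^{i+1,i}_{\ct,\ur}(X/k)$.
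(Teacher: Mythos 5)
Your overall architecture is the paper's: injectivity via a degree-one zero-cycle, and surjectivity by comparing $z_{k(X)}$ with the generic class $\eta$ in $H^S_0(X_{k(X)})\simeq\Z$ and combining parts (1) and (2) of Theorem \ref{thm:pairing}. One caution on injectivity: your primary justification is misapplied. Corollary \ref{cor:inj, codim one purity}(1) concerns a regular local ring and its fraction field, and the restriction $H^{i+1,i}(k)\to H^{i+1,i}(k(X))$ is \emph{not} injective for a general smooth geometrically connected $X$ (e.g.\ the function field of the Severi--Brauer variety of a degree-$p$ division algebra kills a nonzero class in $H^{2,1}(k)$). The valid route is the one you mention as a fallback and the one the paper uses: take $z\in Z_0(X)$ with $\Deg(z)=1$ (possible since $\Deg$ is surjective) and observe that $\langle z,-\rangle$ composed with the natural map is the identity on $H^{i+1,i}(k)$ by (\ref{eq:Cor Res =deg}).

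The genuine gap is in your handling of the separability caveat, at exactly the point you flagged as delicate: your proposed remedy does not work. Replacing $z$ by $\eta$ at the level of Suslin homology only yields the unproblematic identity $\langle z_{k(X)},\alpha_{k(X)}\rangle=\langle\eta,\alpha_{k(X)}\rangle$, which follows from the injectivity of $\Deg$ over $k(X)$ since the pairing is defined on $H^S_0$. The step that actually needs separability is the other one, $\langle z_{k(X)},\alpha_{k(X)}\rangle=\langle z,\alpha\rangle_{k(X)}$, i.e.\ compatibility of the pairing with the base change $k\to k(X)$ applied to the \emph{fixed} cycle $z$. Theorem \ref{thm:pairing}(2) asserts this only for closed points with separable residue extension (its proof uses that the fiber over $x$ is reduced, so $x_K=\sum_{y\mid x}y$, which fails for inseparable $k(x)/k$), and moving $z$ within its class in $H^S_0(X)$ over the base field cannot help, because the compatibility must be verified for an actual cycle before base change. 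The missing ingredient is the theorem of Gabber--Liu--Lorenzini \cite[Theorem 9.2]{GLL}: since $X$ is smooth and $\Deg\colon H^S_0(X)\to\Z$ is surjective, one may choose the degree-one zero-cycle $z$ supported on closed points whose residue extensions are separable. With this choice Theorem \ref{thm:pairing}(2) applies verbatim, your chain of equalities $\alpha=\langle\eta,\alpha_{k(X)}\rangle=\langle z_{k(X)},\alpha_{k(X)}\rangle=\langle z,\alpha\rangle_{k(X)}$ closes, and this is precisely how the paper's proof proceeds.
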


\begin{proof}
Let us prove the injectivity of the map $H^{i+1,i}(k)\to H^{i+1,i}_{\ct,\ur}(X/k)$. As the map $\Deg\colon H^S_0(X)\to\Z$ is surjective, there exists a $0$-cycle $z\in Z_0(X)$ such that $\Deg(z)=1$. It suffices to notice that the composition
\begin{equation*}
H^{i+1,i}(k)\to H^{i+1,i}_{\ct,\ur}(X/k)\xrightarrow{\langle z,-\rangle}H^{i+1,i}(k)
\end{equation*}
is the identity map on $H^{i+1,i}(k)$. This follows from the equation (\ref{eq:Cor Res =deg}) together with the condition that $\Deg(z)=1$. Thus, the natural map $H^{i+1,i}(k)\to H^{i+1,i}_{\ct,\ur}(X/k)$ is injective. 

Let us prove the surjectivity of the map $H^{i+1,i}(k)\to H^{i+1,i}_{\ct,\ur}(X/k)$. 
Let $\alpha\in H^{i+1,i}_{\ct,\ur}(X/k)$ be an arbitrary element. It suffices to show that $\alpha$ is in the image of the map $H^{i+1,i}(k)\to H^{i+1,i}_{\ct,\ur}(X/k)$. We follow the argument in \cite[\S4]{ABBB}. Let $\eta\in (X_{k(X)})_{(0)}$ be the generic point of $X$. 
As the degree map $\Deg\colon H^S_0(X)\to\Z$ is surjective, by \cite[Theorem 9.2]{GLL}, there exists a $0$-cycle $z\in Z_0(X)$ such that $\Deg(z)=1$ and $z$ is supported on closed points having separable residue extension. Therefore, Theorem \ref{thm:pairing}(2) implies that   
\begin{equation*}
\langle z_{k(X)},\alpha_{k(X)}\rangle=\langle z,\alpha\rangle_{k(X)}.
\end{equation*}
On the other hand, as $\Deg (z_{k(X)})=\Deg(\eta)$,  the injectivity of the map $\Deg\colon H^S_0(X_{k(X)})\to\Z$ implies that 
\begin{equation*}
\langle z_{k(X)},\alpha_{k(X)}\rangle=\langle \eta,\alpha_{k(X)}\rangle=\alpha,
\end{equation*}
where the last equality follows from Theorem \ref{thm:pairing}(1). Thus $\alpha=\langle z,\alpha\rangle_{k(X)}$ belongs to the image of the natural map $H^{i+1,i}(k)\to H^{i+1,i}_{\ct,\ur}(X/k)$. This completes the proof. 
\end{proof}

By applying Corollary \ref{cor:pairing} to proper smooth varieties, we obtain the following.

\begin{cor}\label{cor:pairing proper}
Let $X$ be a proper smooth variety over a field $k$ of characteristic $p>0$. Suppose that the degree map $\Deg\colon \CH_0(X_K)\to\Z$ is an isomorphism for any field extension $K/k$. Then for any $i\ge 0$, we have a natural isomorphism $H^{i+1}(k,\Z/p\Z(i))\xrightarrow{\simeq} H^{i+1}_{\ur}(k(X)/k,\Z/p\Z(i))$. 
\end{cor}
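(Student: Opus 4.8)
The plan is to deduce this from Corollary \ref{cor:pairing}, which already establishes the isomorphism $H^{i+1,i}(k)\xrightarrow{\simeq}H^{i+1,i}_{\ct,\ur}(X/k)$ under a hypothesis on the Suslin homology, and then to translate both sides into the statement as phrased here. There are two translations to carry out: replacing the Suslin homology hypothesis by the Chow-group hypothesis, and replacing the curve-tame unramified cohomology by the ordinary unramified cohomology of the function field. Before either, I would observe that the hypothesis already forces $X$ to be geometrically connected: if $X_{\overline{k}}$ had more than one connected component, then the degree map on $\CH_0(X_{\overline{k}})$ would fail to be injective, contradicting the assumed isomorphism for a suitable algebraically closed $K/k$. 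Thus the standing geometric connectedness assumption of Corollary \ref{cor:pairing} and of Proposition \ref{prop:t, ur coh}(2) is automatic.

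First I would dispose of the cohomological side. Since $X$ is proper and smooth over $k$, Proposition \ref{prop:t, ur coh}(2) gives the chain of equalities $H^{i+1,i}_{\ct,\ur}(X/k)=H^{i+1,i}_{\tame,\ur}(X/k)=H^{i+1,i}_{\ur}(X)=H^{i+1,i}_{\ur}(k(X)/k)$, so the target group appearing in Corollary \ref{cor:pairing} is literally the unramified cohomology $H^{i+1}_{\ur}(k(X)/k,\Z/p\Z(i))$ of the function field, in the notation of the introduction.

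Next I would address the hypothesis. For $X$ proper over $k$, the natural surjection $H^S_0(X_K)\twoheadrightarrow\CH_0(X_K)$ from the $0$-th Suslin homology onto the Chow group of zero-cycles is an isomorphism for every field extension $K/k$; this is the fact recalled just after Theorem \ref{thm-int:main}, and it is compatible with the degree maps on both sides. Consequently the assumed isomorphism $\Deg\colon\CH_0(X_K)\xrightarrow{\simeq}\Z$ for all $K/k$ transports to an isomorphism $\Deg\colon H^S_0(X_K)\xrightarrow{\simeq}\Z$; restricting to finitely generated extensions $K/k$ yields exactly the hypothesis required by Corollary \ref{cor:pairing}.

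With both translations in place, Corollary \ref{cor:pairing} directly produces the natural isomorphism $H^{i+1,i}(k)\xrightarrow{\simeq}H^{i+1,i}_{\ct,\ur}(X/k)$, which by the first step is the desired $H^{i+1}(k,\Z/p\Z(i))\xrightarrow{\simeq}H^{i+1}_{\ur}(k(X)/k,\Z/p\Z(i))$. The proof is therefore essentially bookkeeping once Corollary \ref{cor:pairing} is granted; the only genuine external input is the comparison $H^S_0(X_K)\simeq\CH_0(X_K)$ for proper $X$, which is precisely where properness enters in an essential way. The hard part is not a computation but ensuring this comparison is invoked with its compatibility with the degree maps, so that universal triviality of $\CH_0$ really does feed into the Suslin-homological hypothesis of Corollary \ref{cor:pairing}.
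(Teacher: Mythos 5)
Your proof is correct and takes essentially the same route as the paper's: both identify $H^S_0(X_K)\simeq\CH_0(X_K)$ for proper smooth $X$ (compatibly with degree maps) and then combine Corollary \ref{cor:pairing} with Proposition \ref{prop:t, ur coh}\,(2) to identify $H^{i+1,i}_{\ct,\ur}(X/k)$ with $H^{i+1}_{\ur}(k(X)/k,\Z/p\Z(i))$. Your preliminary check that the hypothesis forces $X$ to be geometrically connected (so that Corollary \ref{cor:pairing} applies) is a point the paper leaves implicit, and it is handled correctly.
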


\begin{proof}
Noticing that for a proper smooth variety over a field $k$, there exists a natural isomorphism $
H_0^S(X_K)\xrightarrow{\simeq}\CH_0(X_K)$
of abelian groups for any field extension $K/k$, the claim immediately follows from Corollary \ref{cor:pairing} together with Proposition \ref{prop:t, ur coh}\,(2).
\end{proof}


\end{document}